\documentclass[12pt,reqno]{amsart}

\usepackage{amsmath}
\PassOptionsToPackage{hyphens}{url}\usepackage[hidelinks,hypertexnames=false]{hyperref}

\usepackage[utf8]{inputenc}
\usepackage[british]{babel}
\usepackage[noabbrev, nameinlink]{cleveref}
\usepackage{amsthm}
\usepackage{amssymb}
\usepackage{mathrsfs}
\usepackage{indentfirst}
\usepackage{stmaryrd}
\usepackage{bbm}
\usepackage{xspace, xcolor}
\usepackage{thmtools}
\usepackage{thm-restate}
\usepackage{calc}
\usepackage{fullpage}
\usepackage{comment}
\usepackage{csquotes}

\usepackage[backend=biber, style=ext-numeric, natbib=true, safeinputenc=true,
  maxbibnames=99, giveninits=true,
url=false, isbn=false, doi=false, articlein=false, mincitenames=99, maxcitenames=100, sorting=nyt]{biblatex}
\DeclareFieldFormat{pages}{#1}
\DeclareFieldFormat[article]{title}{#1\isdot}

\makeatletter
\def\SL@eqntext#1{\rlap{\hspace{-2cm}{\showlabelsetlabel{\tiny \color{blue}\SL@prlabelname{#1}}}}}
\makeatother

\makeatletter
\newcommand{\IfRestatedTF}[2]{\ifthmt@thisistheone #1\else #2\fi}
\makeatother

\newcommand*{\ie}{i.e.\@\xspace}

\newcommand*{\Freiman}{Fre\u{\i}man\xspace}
\usepackage[shortlabels]{enumitem}

\newcommand{\pfr}{\textup{pfr}}

\newcommand{\R}{\mathbb{R}}
\newcommand{\Z}{\mathbb{Z}}
\newcommand{\ZZ}{\mathbb{Z}}
\newcommand{\N}{\mathbb{N}}
\newcommand{\NN}{\mathbb{N}}
\newcommand{\FF}{\mathbb{F}}
\newcommand{\QQ}{\mathbb{Q}}
\newcommand{\RR}{\mathbb{R}}

\newcommand{\eps}{\varepsilon}
\newcommand{\doubling}{\sigma}

\newcommand{\cX}{\mathcal{X}}
\newcommand{\containers}{\mathcal{X}}
\newcommand{\containersHost}{\Sigma}

\newcommand{\cF}{\mathcal{F}}
\newcommand{\calF}{\mathcal{F}}

\newcommand{\cS}{\mathcal{S}}

\newcommand{\cH}{\mathcal{H}}

\newcommand{\dimF}{\dim_\mathrm{F}}
\newcommand{\rank}{\operatorname{rank}}

\newcommand{\conv}{\operatorname{conv}}

\newcommand{\norm}[1]{\lVert#1\rVert}

\newcommand{\equivc}[1]{\llbracket#1\rrbracket}
\newcommand{\oN}[1]{N(#1)}

\crefname{prop}{proposition}{propositions}
\crefname{obs}{observation}{observations}
\Crefname{prop}{Proposition}{Propositions}
\Crefname{obs}{Observation}{Observations}

\title{Counting sets with given doubling via dimension}

\author{Marcelo Campos \and Gabriel Dahia \and Jo\~ao Pedro Marciano}

\address{IMPA, Estrada Dona Castorina 110, Jardim Bot\^anico, Rio de Janeiro, 22460-320, Brasil}
\email{\{marcelo.campos, joao.marciano\}@impa.br}

\address{Institute of Mathematics, EPFL, Lausanne, Switzerland}
\email{gabriel.dahiafernandes@epfl.ch}

\usepackage{graphicx}

\newtheorem{thm}{Theorem}[section]
\newtheorem{lem}[thm]{Lemma}
\newtheorem{defi}[thm]{Definition}
\newtheorem{cor}{Corollary}[thm]

\newtheorem{prop}[thm]{Proposition}

\newtheorem{obs}[thm]{Observation}

\newtheoremstyle{named}{}{}{\itshape}{}{\bfseries}{.}{.5em}{\thmnote{#3}}
\theoremstyle{named}

\begin{document}

\begin{abstract}
  We determine, up to a factor of $2^{o(k)}$, the number of $k$-sets $A \subset \{1, \ldots, n\}$ such that $|A + A| \leq m$, where $k = \Theta(\log n)$ and $m \le k^{1 + \alpha}$, for small $\alpha > 0$, answering a question of \citeauthor{green2014counting}.
\end{abstract}

\maketitle

\section{Introduction}

In his seminal work on random Cayley graphs, \citet{green2005counting} observed that counting sets in the collection
\begin{equation*}
  \Lambda_{n, k, m} = \big\{ A \subset [n] : |A| = k \, \text{ and } \, |A + A| \leq m \big\}
\end{equation*}
is sufficient to estimate the clique number of the uniform random Cayley sum graph defined over $\ZZ_n$.
\citet{green2014counting} improved \citeauthor{green2005counting}'s count of sets in $\Lambda_{n, k, m}$ and determined that, with high probability, the clique number of this graph is asymptotically equal to that of $G(n, 1/2)$.
They proved, in particular, that when $m=O(k)$,
\begin{equation}\label{eq:green-morris}
  |\Lambda_{n, k, m}|  \leq 2^{o(k)} \binom{m/2}{k} n^{\lfloor m/k  + o(1) \rfloor}.
\end{equation}
Note that taking all $k$-subsets of an arithmetic progression of size $m/2$ gives $\binom{m/2}{k}$ choices for the sets in $\Lambda_{n, k, m}$, which is the dominant term in \eqref{eq:green-morris} when $k/\log n \to \infty$.
On the other hand, the union of $t = \lfloor m/k - 2 + o(1) \rfloor$ random points with an arithmetic progression of size $k - t$ gives $n^{\lfloor m/k + o(1) \rfloor}$ choices for the elements in $\Lambda_{n, k, m}$, which is the dominant term in \eqref{eq:green-morris} when $k/\log n \to 0$.
Together with \eqref{eq:green-morris}, these two constructions determine $|\Lambda_{n, k, m}|$ up to a factor of $2^{o(k)} n^{o(1)}$, except when $k = \Theta(\log n)$.

Motivated by this, \citeauthor{green2014counting} asked if one could determine the size of $\Lambda_{n, k, m}$ with the same precision in this intermediate regime.
Our main result is a complete answer to this question, proving bounds on the number of elements of $\Lambda_{n, k, m}$ that are tight up to an error term of the form $2^{o(k)}$.
In fact, we also strengthen their original counting result for all $k$: instead of requiring $m=O(k)$, we determine the number of $k$-sets $A \subset [n]$ with $|A + A| \leq m$ for all $m \leq k^{1 + \alpha}$, for some small constant $\alpha>0$.

To motivate the form of the bound we prove, consider the following construction, which generalises the preceding examples and first appeared implicitly in~\cite{green2014counting} and explicitly in~\cite{campos2020number}.
First, fix some positive integer $d$ and choose an arithmetic progression $P \subset [n]$ of size
\begin{equation}\label{eq:def-of-lambda}
  \lambda_{k,m}(d) = \bigg\lfloor\frac{m-(d-1)k + \binom{d+1}{2}}{2}\bigg\rfloor-d+1.
\end{equation}
We then form $A$ by taking the union of a subset of $P$ of size $k-d+1$, with $d-1$ random elements of $[n]\setminus P$.
There are approximately $n^2$ choices for $P$, $\binom{|P|}{k-d+1}$ choices for the subset of $P$, and $n^{d-1}$ choices for the remaining elements, which results in at least
\[n^{d+1} \binom{\lambda_{k,m}(d)}{k-d+1}\]
sets.
We will show in \Cref{sec:construction} that there exists a large collection of such sets that are distinct, and also that every set obtained in this way belongs to $\Lambda_{n,k,m}$.

We now state our main theorem, which shows that this construction gives the correct order of magnitude for $|\Lambda_{n, k, m}|$, up to a factor of $2^{o(k)}$.

\begin{thm}\label{thm:main}
  There exists an absolute constant $\alpha > 0$ such that the following holds.
  For all $n, k, m \in \N$ satisfying $m \leq n/4$ and $m \leq k^{1 + \alpha}$, we have that
  \begin{equation}\label{eq:main}
    |\Lambda_{n, k, m}| = 2^{o(k)} \sum_{d}\, n^{d + 1} \, \binom{\lambda_{k,m}(d)}{k - d +1},
  \end{equation}
  where the sum is over all $d \in [k-1]$ satisfying
  \begin{equation}\label{eq:condition-on-d-main}
    (d + 1)k - \binom{d+1}{2} \le m.
  \end{equation}
\end{thm}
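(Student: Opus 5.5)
The lower bound comes from the construction described before the theorem; the real work is the upper bound. I would organise the upper bound around the \Freiman dimension $\dimF(A)$, the largest $d$ such that $A$ is \Freiman-isomorphic to a set in $\ZZ^d$ not contained in a hyperplane. I split $\Lambda_{n,k,m}$ according to this dimension and show that the sets of dimension $d$ number at most $2^{o(k)} n^{d+1}\binom{\lambda_{k,m}(d)}{k-d+1}$. The dimensions that can occur are exactly those allowed by \eqref{eq:condition-on-d-main}. Since the number of possible $d$ is at most $k$, this loses only a factor $k=2^{o(k)}$.

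\textbf{Lower bound.} I take the construction with $d$ maximising the summand and check two things. First, $|P\cup X+P\cup X|\le m$: the sum set consists of $P+P$, which has size $2|P|-1$; the translates $x+P$, which add at most $(d-1)|P|$ in total; and the $\binom{d}{2}+(d-1)$ sums among the random points. Plugging in \eqref{eq:def-of-lambda}, the total is at most $m$. (Only the subset $A'\subset P$ is used, so the bound on $|A+A|$ holds all the more.) Second, distinctness up to a $2^{o(k)}$ factor. I would restrict to a family of progressions $P$, say with common difference at most $n/(4m)$ and with the random points at distance more than $2m$ times the difference from $P$. Then from $A$ one recovers $P$ up to $O(1)$ choices: $P$ is essentially the hull of the unique dense progression in $A$, and the random points are the outliers. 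This requires $k-d+1\ge 2$ and $m\le n/4$.

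\textbf{Upper bound.} I would use the Freiman $2^d$-type lemma $|A+A|\ge (d+1)k-\binom{d+1}{2}$ for sets of dimension $d$, which is exactly why condition \eqref{eq:condition-on-d-main} appears. Then I would use a structural step. If $\dimF(A)=d$ and $|A+A|\le m$, then, after removing a set $X$ of $d-1$ points (up to a $2^{o(k)}$ loss, possibly allowing $O(k/\log k)$ exceptional points), the remainder $A'$ has \Freiman dimension $1$. Such an $A'$ lies in an arithmetic progression $P$ of length about $|A'+A'|-|A'|+1$. This follows from \Freiman's $3k-4$ theorem in its dimension-one form, extended by Lev–Smeliansky/Jin-type results, or by a container argument for larger $m$.

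The key inequality is that each outlier contributes roughly $|P|$ new sums. This gives $|A'+A'|+(d-1)|A'|-\binom{d}{2}\lesssim m$, so $|P|\le\lambda_{k,m}(d)$ up to lower-order terms. For counting, I choose $P$ ($n^2$ ways), the $d-1$ outliers ($n^{d-1}$ ways), and $A'\subset P$ ($\binom{\lambda}{k-d+1}$ ways). Any slack of $o(k)$ in $|P|$ costs only $2^{o(k)}$, because $\binom{\lambda+s}{k}/\binom{\lambda}{k}\le (1+s/(\lambda-k))^{k}$. Here $\lambda-k$ is of order $k$ when $m\ge(2+\epsilon)k$; the regime near $m=2k$ needs a separate computation.

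\textbf{Main obstacle.} The main obstacle is the structural step when $m$ is as large as $k^{1+\alpha}$. There the dimension-one piece need not be dense in a progression, and the set may be a genuinely high-dimensional object, such as a $d$-dimensional box-like set that is not a progression plus outliers. I would handle this with a container-style or compression argument. I would show that sets of dimension $d$ embed into a convex progression in $\ZZ^d$. I would then argue that any configuration other than "long side plus $d-1$ singletons" has strictly more sums, using a Brunn–Minkowski / discrete compression inequality. The resulting count is then dominated by the extremal configurations. The entropy losses from choosing the \Freiman embedding must be kept to $n^{d+1}2^{o(k)}$; this bookkeeping is what limits $\alpha$ to a small constant.
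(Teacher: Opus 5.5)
Your lower-bound construction matches the paper's, but the upper bound has a genuine gap. The structural step is false as a statement about individual sets, and even where the structure holds, the count you derive from it is off by a factor $2^{\Theta(k)}$.

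First, a set with $\dimF(A)=d$ and $|A+A|\le m$ need not become one-dimensional after deleting $d-1$ points, or even $O(k/\log k)$ points. For example, $A=\{0,\dots,a-1\}\cup\{N,\dots,N+a-1\}$ with $N\ge 2a$ has $k=2a$, $|A+A|=3k-3$ and $\dimF(A)=2$, and it stays two-dimensional after any such deletion. For larger $m$, dense subsets of any two-dimensional box behave the same way.

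More fundamentally, even for $d=1$ no per-set argument gives $|P|\le\lambda_{k,m}(d)+o(k)$. The $3k-4$ theorem and Jin's extension need $|A'+A'|\le(3+\epsilon)|A'|$. Beyond that range, a set of \Freiman dimension one need not lie in a progression of length $|A'+A'|-|A'|+1$ at all; see $\{0,\dots,a-1\}\cup\{a,2a,4a,8a,16a\}$. Even where these theorems apply, combining them with your key inequality gives only $|P|\le|A'+A'|-|A'|+1\lesssim m-dk$, whereas $\lambda_{k,m}(d)\approx(m-(d-1)k)/2$. For $m=(d+1+c)k$ these are $(1+c)k$ and $(1+c/2)k$, and $\binom{(1+c)k}{k}/\binom{(1+c/2)k}{k}=2^{\Theta(k)}$ for fixed $c>0$.

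Your step ``so $|P|\le\lambda_{k,m}(d)$'' tacitly assumes $|A'+A'|\ge 2|P|-O(1)$, which fails for individual sets. Take $A'=\{0,\dots,k-2\}\cup\{2k-4\}$: it has $\dimF(A')=1$ and $|A'+A'|=3k-4=:m$, yet the shortest progression containing it has $2k-3$ terms, while $\lambda_{k,m}(1)=\lfloor(3k-3)/2\rfloor$. So the correct bound holds only for the bulk of the sets; it is an entropy statement. Your fallback (compression or Brunn--Minkowski, showing that non-extremal shapes have strictly more sums) compares sumset sizes. It says nothing about how many non-extremal sets there are, and that is the whole difficulty.

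What is missing is the combination of containers with a sharp supersaturation theorem.
\begin{itemize}
\item The paper first maps $A$, via Chang's theorem and Green's lattice-basis lemma, to a full-rank, affinely \Freiman-isomorphic $B\subset[-s,s]^d$ with $s=\exp(O((\sigma\log\sigma)^3))k^{2\sigma}$ (\Cref{thm:fullrank-freimans-thm}). The affine map costs only $n^{d+1}$.
\item The small host set lets the container theorem for sumsets (\Cref{thm:containers-sumset}) place every $B$ in one of $2^{o(k)}$ containers $X$. Each container is paired with some $Y\subset B+B$ that misses few sums of $X$.
\item \Cref{thm:supsat} shows that a $\delta$-robust part $X'\subset X$ of rank $r$ forces $|Y|\ge(1-\gamma)(r+1)|X'|$.
\item The $d-r$ directions of $B$ outside the affine span of $X'$ contribute about $(d-r)k$ further sums of $B$ disjoint from $X'+X'$ (\Cref{lem:size-of-Y}).
\item Hence every container has size at most $(1+o(1))\max_{r}\frac{m-(d-r)k}{r+1}$. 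A binomial computation shows that $r=1$ dominates, giving $\binom{\lambda_{k,m}(d)}{k-d+1}$ up to $2^{o(k)}$.
\end{itemize}
The sharp factor $r+1$ is essential: the factor $(r+1)/2$ of earlier work would give containers of size about $m$ rather than $m/2$ when $r=1$. Obtaining it with $|T|$ polynomial in $r$ is exactly \Cref{thm:few-translates}, which relies on the weak PFR theorem of Gowers--Green--Manners--Tao and a weighted \Freiman lemma built on Bollob\'as--Leader--Tiba.

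Separately, your lower-bound check needs repair. You bound $|(P\cup X)+(P\cup X)|$ for your set $X$ of outliers, but for generic $X$ this exceeds $m$ by roughly $(d-1)(\lambda_{k,m}(d)-k)$. You must bound the cross terms by $|x+A'|=k-d+1$ and use $|X+X|\le\binom{d}{2}$, which is exactly what \eqref{eq:def-of-lambda} is tuned for.
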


If $k = C \log n$ for some $C>0$, then the values of $d$ that dominate the sum vary depending on the value of $C$.
For larger values of $C$, we are precisely in the regime where the binomial term dominates in \eqref{eq:green-morris}; in this case, the dominant contribution in \eqref{eq:main} comes from $d=1$.
On the other hand, when $C$ is very small, the values of $d$ close to the maximum satisfying \eqref{eq:condition-on-d-main} dominate the right-hand side of \eqref{eq:main}, namely $d = \lfloor m/k - 1 + o(1)\rfloor$.
This regime of $k$ corresponds to when the term $n^{\lfloor m/k + o(1)\rfloor}$ is the dominant contribution in \eqref{eq:green-morris}.

The main technical tool we need to prove \Cref{thm:main} is a novel version of \Freiman's lemma via few translates.
\Freiman's lemma states that, whenever a set $X\subset \RR^d$ has rank $r$, then we have the following lower bound on the size of its sumset:
\[|X+X| \geq (r+1)|X|-\binom{r+1}{2}.\]
The statement below strengthens this result by showing that one can obtain a similar lower bound on $|X + T|$ for some small subset $T \subset X$.

\begin{restatable}{thm}{fewTranslates}\label{thm:few-translates}
  There exists a constant $C > 0$ such that the following holds.
  Let $d, r \in \N$ and let $\gamma > C \log (2r) / r$.
  If $X \subset \Z^d$ is a finite set with $\rank(X) \geq r$ and $|X|\geq r/(2\gamma)$, then there exists $T \subset X$ such that
  \begin{equation}\label{eq:fewTranslatesBound}
    |X + T| \geq (1 - \gamma)(r + 1)|X|
  \end{equation}
  and $|T| \le C r^C$.
\end{restatable}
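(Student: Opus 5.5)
Our plan is to imitate the standard inductive proof of \Freiman's lemma, removing one vertex of the convex hull at a time, but to charge the new sums to a few fixed translates rather than to all of $X$. First we reduce to the case $\rank(X) = r$ exactly. We pick $r+1$ affinely independent points $v_0,\dots,v_r \in X$; this is possible since the affine rank is at least $r$. Projecting to the affine span of these points, or to a generic $r$-dimensional quotient that is injective on $X$, keeps $|X+T|$ unchanged and makes the rank exactly $r$. We then put $T_0=\{v_0,\dots,v_r\}$ and look for $T \supseteq T_0$ with $|T| \le Cr^C$.

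The key intermediate bound is a ``covering by simplices'' statement. Let $\Delta$ be the simplex spanned by $T_0$. If $X$ were contained in $\Delta$ and spread out, then the translates $X+v_i$ would be nearly disjoint, since $v_i$ is the extreme vertex in direction $i$. This would give $|X+T_0|\approx (r+1)|X|$ directly. In general we argue by a peeling/compression procedure. We choose a linear functional $\phi$ and order $X$ along it. For each $x\in X$, the sum $x+v$ is new, meaning not of the form $x'+v'$ with $x'$ earlier, whenever $v$ lies on a suitable face. To make this robust we use the following step. For a random (or well-chosen) sequence of $O(\log r/\gamma)$ generic directions $\phi_1,\phi_2,\dots$, we take the extreme points of $X$ in these directions together with the affine-basis points. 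We then show that, for all but a $\gamma$-fraction of $x\in X$, at least $r+1$ of the sums $x+t$ ($t\in T$) are ``private'' to $x$. Double counting private sums then gives $|X+T|\ge(1-\gamma)(r+1)|X|$.

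To implement the privacy argument we intend to use the compression to $\Z^r$ together with the ordering of $X$ by a generic linear functional. Let $X=\{x_1<\dots<x_N\}$ in lexicographic order with respect to a basis adapted to $T_0$. Following \Freiman's proof, we show that each new element $x_j$ contributes $r+1$ new sums with elements of $T$ unless $x_j$ lies in a lower-dimensional section. Elements lying in lower-dimensional sections are handled by induction on $r$, which costs $r^{O(1)}$ extra translates per level. The hypothesis $|X|\ge r/(2\gamma)$ absorbs the $\binom{r+1}{2}$ loss, since $\binom{r+1}{2}\le \gamma(r+1)|X|$. The condition $\gamma> C\log(2r)/r$ ensures that the number of induction levels, about $\log r/\gamma$, times $r$ stays polynomial in $r$.

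The main obstacle will be the privacy step. In \Freiman's lemma each new point $x$ gains $r+1$ new sums $x+y$ where the $y$ range over \emph{all} of $X$ (neighbours on the hull). We need these $y$ to come from a fixed small $T$. For this we would first try a pigeonhole over faces: the hull of $X$ has a bounded number of relevant facets in directions sampled from a polynomial-size net. Then we would show that a random subset of vertices of size $r^{O(1)}$ witnesses the $r+1$ new sums for most $x$. If that fails, we would fall back to a density increment, passing to a subset of $X$ of rank $r$ where the bound is easier, with the $\log(2r)/r$ threshold arising from the number of iterations.
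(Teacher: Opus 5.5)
Your proposal has a genuine gap: the ``privacy step'' restates the theorem rather than reducing it. Order $X$ and count, for each $x$, the sums $x+t$ ($t\in T$) that are not of the form $x'+t'$ with $x'$ earlier. The total is exactly $|X+T|$. So ``most $x$ receive $r+1$ new sums'' is just a pointwise form of $|X+T|\ge(1-\gamma)(r+1)|X|$, and you leave exactly this as ``the main obstacle'' with two fallbacks that are never carried out.

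The supporting claims also do not hold up. A rank-$r$ set can be in convex position with the number of hull facets growing with $|X|$ (e.g.\ points on the moment curve $\{(t,t^2,\dots,t^r)\}$). So there is no bounded family of ``relevant facets''. In \Freiman's induction one removes one point at a time, and its $r+1$ witnesses are its neighbours on a visible facet of the remaining hull; these change from point to point. Nothing in your sketch explains why a fixed set of $r^{O(1)}$ points can play this role for most $x$.

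Induction on lower-dimensional sections gives only $(r'+1)|X_i|$ on a section $X_i$ of rank $r'$. The missing $r-r'$ must come from sums \emph{between} sections. Getting these with few translates requires the sections to be few (polynomially many in $r$) and $r-r'$ to be $O(\log r)$, and you establish neither. This is exactly where a greedy geometric dichotomy of the kind you describe only yields $(1-\gamma)\frac{r+1}{2}|X|$. Your bookkeeping of ``$\log r/\gamma$ induction levels'' is also not where the threshold $\gamma>C\log(2r)/r$ comes from.

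The missing idea is to split on the doubling $\doubling=|X+X|/|X|$ and use additive structure.
\begin{itemize}
\item If $r+1\le c\doubling^{1/3}$, the theorem of Fox, Luo, Pham and Zhou (\Cref{thm:flpz}) directly gives $|T|=O(r)$ with $|X+T|\ge(r+1)|X|$. This covers convex-position examples such as the moment curve.
\item Otherwise $\doubling=O(r^3)$, and the weak polynomial \Freiman--Ruzsa theorem (\Cref{thm:weak-pfr}) gives $X_0\subset X$ with $|X_0|\ge\doubling^{-C_{\pfr}}|X|$ inside a subspace $W$ of dimension $O(\log r)$. One then fibres $X$ over $Z=\Pi_{W^\perp}(X)$.
\item If $|Z|\ge(r+1)\doubling^{C_{\pfr}}$, \Cref{prop:Z-large} finishes.
\item Otherwise $|Z|=r^{O(1)}$. \Cref{prop:phase3} runs \Freiman's vertex-removal induction on $Z$, removing whole fibres weighted by their sizes, and uses the three translates of \Cref{thm:blt} for each pair of fibres. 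This gives $|X+T|\ge(r-\dim W+1)|X|-\binom{|Z|+1}{2}$ with $|T|\le 3|Z|^2$.
\end{itemize}
The $O(\log r)$ loss from $\dim W$ is precisely where $\gamma>C\log(2r)/r$ is used. The $\binom{|Z|+1}{2}$ term is absorbed once $|X|\ge r^{O(1)}$. For smaller $X$ one takes $T=X$ and applies \Freiman's lemma together with $|X|\ge r/(2\gamma)$; only in this last case does your observation $\binom{r+1}{2}\le\gamma(r+1)|X|$ enter.
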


The above result contributes to a recent line of work seeking lower bounds for sumsets without using the full set in the sumset.
The study of these questions was initiated in \citep{bollobas2022large}, and has since been continued by several authors \citep{fox2022small,jing2023kemperman}, including in our previous work on the independence number of random Cayley sum graphs \citep{campos2024independence} (see \Cref{sec:overview-freimans-lemma} for a detailed discussion of prior results).
\Cref{thm:few-translates} improves the lower bound on $|X+T|$ that we proved in \citep[Theorem 1.3]{campos2024independence} by a factor of $2$, while still keeping $|T|$ a polynomial function of $r$.
The proof of \Cref{thm:few-translates} uses the machinery developed in \citep{campos2024independence} to prove a ``weighted'' version of \Freiman's lemma, but replaces a simple greedy geometric argument used there by an application of the ``weak'' polynomial \Freiman--Ruzsa theorem of \citet*[Theorem 1.3]{gowers2023conjecture}.

In the proof of our main theorem, \Cref{thm:main}, we use \Cref{thm:few-translates} together with an application of the container method for sumsets, introduced by \citet{campos2020number}.
This method provides, in our context, a special collection $\cX$ of at most $2^{o(k)}$ sets with the property that every set $A$ under consideration is contained in some ``container'' $X \in \cX$.
We can therefore count these sets by first choosing $X \in \cX$, with $2^{o(k)}$ choices, and then choosing $A \in \binom{X}{k}$.
Therefore, to obtain the upper bound in \Cref{thm:main}, it suffices to bound $|X|$ for each $X \in \cX$.
This is the next step in our argument, which we refer to as the supersaturation result, and where \Freiman's lemma via few translates plays a crucial role.
Most of the complications appear at this point: the result that we require generalises previous supersaturation results in \cite{campos2020number, campos2024independence, liu2024number} by relying on additional geometric information about these sets.

One important aspect of our application of the container method for sumsets is that it requires a small host set.
Therefore, we start the proof of \Cref{thm:main} by reducing the problem of counting sets in $\Lambda_{n, k, m}$ to that of counting sets in another, more geometric collection, which we denote by $\Lambda^{(d)}_{k, m}$.
This is a family of $k$-subsets $A$ of a box $[-s,s]^d \cap \ZZ^d$, where $s$ is small and given by an application of a variant of \Freiman's theorem.
In addition to satisfying $|A + A| \le m$, these sets are also full rank, that is, they are not contained in any proper affine subspace of $\RR^d$.
Reducing the counting problem to a smaller, more structured ambient set (e.g.\ $\ZZ_p$, APs, GAPs) is a well-established approach that has appeared in several settings \citep{green2014counting, campos2020number, campos2024independence, liu2024number}.
However, the variant of \Freiman's theorem that we use in this work is different, and its geometric aspect plays a crucial role in the proof.

In the next section, we provide a complete overview of the proof of the upper bound in \Cref{thm:main}.
We explain how we overcome the obstacles that had hindered previous approaches, and we describe the structure of the paper and how the different sections fit together to establish \Cref{thm:main}.

\section{Overview of the proof}\label{sec:overview}

In this section we outline the proof of the upper bound in \Cref{thm:main}.
The setup is the same as in its statement: we want to determine the size of $\Lambda_{n, k, m}$, that is, to count sets $A \subset [n]$ such that $|A| = k$ and $|A + A| \le m$.
To simplify the notation, we will use throughout $\doubling[A] = |A + A| / |A|$ to denote the doubling of $A$, and, when the set is clear from context (or only an upper bound on the doubling is required) we will abbreviate this to $\doubling$.
For example, unless noted otherwise, we have $\doubling = m / k$.
With this notation, the hypothesis in \Cref{thm:main} is exactly $\sigma \leq k^{\alpha}$ for some small constant $\alpha > 0$.

Most of the proof of the upper bound in \Cref{thm:main} takes place in $\ZZ^d$ instead of $[n]$.
Our first step is therefore to change the ambient setting, using a version of \Freiman's theorem~\cite{freiman1973foundations, ruzsa1994generalized} (see \Cref{thm:fullrank-freimans-thm}).
The rank of a set $B \subset \RR^d$, denoted by $\rank(B)$, is the dimension of the smallest affine subspace of $\RR^d$ that contains $B$.
Moreover, a set $B \subset \RR^d$ is called \emph{full rank} if it is not contained in any proper affine subspace of $\RR^d$, \ie, $\rank(B)=d$.

\begin{defi}\label{def:def-of-Lambda-d}
  Consider the box $\Sigma_d := [-s, s]^d \cap \ZZ^d$, where
  \begin{equation}\label{eq:value-of-s-overview}
    s =  \exp\big(O(\sigma\log \sigma)^3\big) k^{2 \doubling}.
  \end{equation}
  Define $\Lambda^{(d)}_{k, m}$ to be the family of full rank sets $B \subset \Sigma_d$ satisfying $|B| = k$ and $|B + B| \le m$.
\end{defi}

Our version of \Freiman's theorem, \Cref{thm:fullrank-freimans-thm}, states, in essence, that every $A \in \Lambda_{n, k, m}$ is \Freiman isomorphic to some $B \in \Lambda^{(d)}_{k, m}$, where $d$ is at most the \Freiman dimension of $A$, denoted by $\dimF(A)$ (see \Cref{sec:main} for the definitions of \Freiman isomorphism and \Freiman dimension).
Furthermore, the resulting \Freiman isomorphism is affine. This allows us to count such maps using elementary linear algebra (see \Cref{prop:counting-affine-maps}).

An important fact about \Freiman dimension that we will use repeatedly is the following result, known as \Freiman's lemma.
\begin{lem}[{\cite[Lemma~1.14]{freiman1973foundations}}]\label{stmt:freimansLemma}
  For any \(A \subset [n]\) we have
  \[|A+A| \geq (d + 1)|A|- \binom{d + 1}{2}\]
  where $d = \dimF(A)$.
\end{lem}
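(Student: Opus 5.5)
This is Freiman's lemma, and I will prove it in its geometric form. Let $d = \dimF(A)$. By the definition of Freiman dimension, $A$ is Freiman isomorphic (of order 2) to a set $X \subset \RR^d$ of full rank, and a Freiman isomorphism preserves $|A|$ and $|A+A|$. So it suffices to show that every finite full-rank $X \subset \RR^d$ satisfies
\[
|X+X| \ge (d+1)|X| - \binom{d+1}{2}.
\]
I will prove this by induction on $|X|$, with $d$ ranging over all values.

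\textbf{Base case.} The base case is $|X| = d+1$, where $X$ is the vertex set of a simplex. Here the sums $x+y$ with $x,y \in X$ are pairwise distinct, since $x+y=x'+y'$ would give a nontrivial affine dependence among at most four affinely independent points. Hence
\[
|X+X| = \binom{d+2}{2} = (d+1)^2 - \binom{d+1}{2},
\]
as required.

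\textbf{Inductive step.} Suppose $|X| > d+1$. Take the convex hull of $X$ and a vertex $v$ of it, chosen so that $X' = X \setminus \{v\}$ remains full rank. Such a $v$ exists: if deleting every vertex dropped the rank, then every vertex would be affinely independent of the rest, so the vertex set would be a simplex with $X$ inside it. Then I argue one of two ways.
\begin{itemize}
\item If $X'$ still has more than $d$ points, I want to show that $X+X$ contains at least $d+1$ elements not in $X'+X'$. The candidates are $2v$ together with the sums $v+w$, where $w$ runs over the neighbours of $v$ in a triangulation of $\conv(X)$, or, more simply, over points of $X'$ adjacent to $v$ in the convex hull.
\item In general, the standard fix is to choose a hyperplane $H$ through $v$'s neighbourhood. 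I would instead prefer the classical route: take a supporting hyperplane $H$ that meets $X$ in a facet set $Y = X\cap H$ of rank $d-1$, and let $Z = X \setminus Y$. Then $Y+Y$, $Y+Z$ and $Z+Z$ are handled separately. The sets $Y+Y$ lie in $2H$, and these are disjoint from $Z+Z$, which lies strictly on one side.
\end{itemize}

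The cleaner approach, which I will commit to, is induction on $d$ plus removal of points. Let $H$ be a supporting hyperplane with $Y = X\cap H$ spanning $H$ (a facet), and let $Z = X\setminus Y$, with $|Y| = a$ and $|Z| = b$.
\begin{itemize}
\item By induction in dimension $d-1$, $|Y+Y| \ge d\,a - \binom{d}{2}$.
\item The elements of $(X+X)\setminus(Y+Y)$ include the sums $z+y$ and $z+z'$, which lie off $2H$.
\item Ordering $Z$ by distance from $H$ and adding points one at a time, each new point $z$ contributes at least $d+1$ new sums that lie beyond the previous level sets. These are $2z$ plus the sums of $z$ with $d$ affinely spanning points, and the extremal choice makes them new.
\end{itemize}
Summing gives
\[
|X+X| \ge d\,a - \binom{d}{2} + (d+1)b \ge (d+1)|X| - \binom{d+1}{2},
\]
where the last inequality uses $a \le |X|$ only when $a \ge d$. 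The edge case $b$ small is checked directly.

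\textbf{Main obstacle.} The difficult step is guaranteeing $d+1$ genuinely new sums for each added point of $Z$. I expect to handle it by adding points in order of a generic linear functional that is maximised off $H$. The new point $z$ is the current maximiser. Then $2z$ and the sums $z+w$ for the $d$ largest previous points in a "lexicographic" sense exceed all previous sums. To guarantee that the relevant $d$ previous points exist, I would instead compare with a single well-chosen facet, which is the classical argument of Freiman.
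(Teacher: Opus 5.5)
There is a genuine gap in the route you commit to: the count does not close. Your final inequality
\[
d\,a-\binom{d}{2}+(d+1)b\ \ge\ (d+1)(a+b)-\binom{d+1}{2}
\]
is equivalent to $a\le d$. But a facet set $Y$ spanning $H$ always has $a\ge d$, so you fall short by exactly $a-d$ whenever the facet carries more than $d$ points of $X$. For instance, $X=\{0,1,2\}\times\{0,1\}$ satisfies $|X+X|=15=3|X|-\binom{3}{2}$. Taking $Y$ to be the top row, your count certifies only $5+3\cdot 3=14$. The undercount happens at the first point $z_1\in Z$ you add. Since $Y$ has rank $d-1$, the $a+1$ sums in $(z_1+Y)\cup\{2z_1\}$ all lie off $2H$ and are pairwise distinct, so $z_1$ contributes $a+1$ new sums, not $d+1$. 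With $a+1$ for $z_1$ and $d+1$ for each of the other $b-1$ points, the total is $da-\binom{d}{2}+(a+1)+(d+1)(b-1)=(d+1)|X|-\binom{d+1}{2}$, as required.

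The second gap is the step you flag as the main obstacle, which is left unproved, and the mechanism you propose does not deliver it. Let $z$ be the new maximiser of a generic functional $\ell$ (the same applies to a lexicographic order) and $w_1$ the previous maximiser. Then $z+w$ exceeds every previous sum only when $\ell(w)>2\ell(w_1)-\ell(z)$, and in general only $w=w_1$ is guaranteed to qualify. What works is convexity. Order $Z$ by a generic perturbation of the distance to $H$, so that each new point $z$ is the unique maximiser of a linear functional on the current set, hence a hull vertex. The previous set $X_{\mathrm{old}}\supseteq Y\cup\{z_1\}$ is then full rank. A facet of $\conv(X_{\mathrm{old}})$ visible from $z$ contains at least $d$ points $u\in X_{\mathrm{old}}$. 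Each midpoint $(z+u)/2$ lies strictly beyond that facet, hence outside $\conv(X_{\mathrm{old}})$, so $z+u\notin X_{\mathrm{old}}+X_{\mathrm{old}}$. This is exactly \Cref{prop:freiman-trick}.

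The paper only cites the lemma, but its weighted generalisation \Cref{prop:phase3} runs the standard vertex-removal induction. If deleting a hull vertex $v$ drops the rank, all of $v+X$ is new; otherwise \Cref{prop:freiman-trick} gives $\rank(X)+1$ new sums. Your repaired facet argument is that induction unrolled.

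In fact the route you abandoned was nearly complete. Your argument that some vertex $v$ with $X\setminus\{v\}$ full rank exists when $|X|>d+1$ is right, once you add that a non-vertex point would have to lie on every facet hyperplane of the simplex. Combined with the visible-facet step, it gives $|X+X|\ge|X'+X'|+d+1$. That closes the induction on $|X|$ for fixed $d$, with no induction on dimension at all.
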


Together, these results yield
\begin{equation}\label{eq:reducing-Lambda-to-Zd}
  |\Lambda_{n, k, m}| \le \sum_{d} n^{d + 1} \, \big|\Lambda^{(d)}_{k, m}\big|
\end{equation}
under the hypotheses of \Cref{thm:main}, where (by \Cref{stmt:freimansLemma} and since $d \leq \dimF(A)$) the sum ranges over those $d \in [k-1]$ satisfying \eqref{eq:condition-on-d-main}.

We have thus reduced our problem to an upper bound for the size of $\Lambda^{(d)}_{k, m}$, which will be the focus of the remainder of this \namecref{sec:overview}.
More precisely, it is enough to prove the following lemma, which, combined with \eqref{eq:reducing-Lambda-to-Zd}, implies the upper bound in \Cref{thm:main}.

\begin{restatable}{lem}{RedToZd}\label{lem:reduction-to-Zd}
  There exists an absolute constant $\alpha > 0$ such that the following holds.
  If $k, m \in \N$ satisfy $m \leq k^{1 + \alpha}$, then
  \begin{equation*}
    \big|\Lambda^{(d)}_{k,m}\big| \leq 2^{o(k)}\binom{\lambda_{k,m}(d)}{k-d+1}
  \end{equation*}
  for any $d \in [k-1]$ satisfying
  \begin{equation*}
    (d + 1)k - \binom{d+1}{2} \le m.
  \end{equation*}
\end{restatable}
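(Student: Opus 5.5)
We will combine the container method for sumsets of \citet{campos2020number} with a supersaturation statement built on \Cref{thm:few-translates}.

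\textbf{Containers.} Fix $d$ satisfying \eqref{eq:condition-on-d-main} and work in the host box $\Sigma_d$. By \eqref{eq:value-of-s-overview}, $|\Sigma_d| \le (2s+1)^d$, so $\log|\Sigma_d| = O(d(\sigma\log\sigma)^3 + d\sigma\log k)$. Since $d \le \sigma$ and $\sigma \le k^{\alpha}$, this is $k^{O(\alpha)} = o(k)$ once $\alpha$ is small. This polylogarithmic size of the host set is exactly what the container lemma for sumsets needs. It will produce a family $\cX$ of $2^{o(k)}$ containers such that every $B \in \Lambda^{(d)}_{k,m}$ lies in some $X \in \cX$. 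Each container comes with a small set $T \subset B$, of size $k^{O(\alpha)}$, that ``fingerprints'' it: $X$ is determined by $T$ together with a set $S \supset B+B$, $|S|\le m$, and $X$ consists of the points $x$ with $x + T$ mostly inside $S$. In particular $|X + T| \le (1+o(1))m$ for the relevant $T$. We may also take $B \subset X$ with $B$ full rank, so $\rank(X) = d$.

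\textbf{Supersaturation.} It then suffices to show that each container satisfies
\[
\binom{|X|}{k} \le 2^{o(k)}\binom{\lambda_{k,m}(d)}{k-d+1}.
\]
This does not follow from a bound on $|X|$ alone. The extremal configuration is an arithmetic progression of length about $\lambda_{k,m}(d)$ together with $d-1$ outlying points. So we need the geometric statement that $X$ is essentially contained in a one-dimensional piece $Y$ with $|Y| \le (1+o(1))\lambda_{k,m}(d)$, plus at most $d-1+o(k/\log k)$ further points. Given that, the count of $k$-subsets of $X$ is at most $\binom{|Y|}{k-d+1}\cdot 2^{o(k)}$, and the $(1+o(1))$ loss in $|Y|$ is absorbed into $2^{o(k)}$ as long as $k-d+1$ is not too close to $\lambda$. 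Otherwise the binomial is at most $2^{o(k)}$ directly.

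\textbf{Obtaining the geometric statement.} Apply \Cref{thm:few-translates} to sub-configurations of $X$. Decompose $X$ along a direction into fibres (parallel lines), and let $r$ be the rank of the set of fibres that are large. If many fibres are large, that is $r \ge 2$ effectively, then \Cref{thm:few-translates} gives $T$ of polynomial size in $r$ with $|X+T| \ge (1-\gamma)(r+1)|X|$. Combined with $|X+T| \lesssim m$ (after enlarging the fingerprint set to include such $T$), this forces $|X| \lesssim m/(r+1)$. A computation then shows this is much smaller than $\lambda_{k,m}(d) \approx (m-(d-1)k)/2$ whenever $m \le k^{1+\alpha}$. Hence one fibre $L$ must carry almost all of $B$.

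With $B$ having at most $d-1+o(k)$ points off $L$, we run a Freiman's-lemma-type count: $|B+B| \ge |(B\cap L)+(B\cap L)| + (\text{contributions } b+(B\cap L) \text{ from off-line points, full rank})$. This yields $|B\cap L| \le \lambda_{k,m}(d) + o(k)$, and similarly for the container's trace on $L$ up to $1+o(1)$ factors.

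\textbf{Main obstacle.} The hardest step is the supersaturation step. The container bound only controls $|X+T|$, not $|X+X|$, and it holds only approximately. So we must handle containers whose ``off-line'' part has more than $d-1$ points and trade those points against the line part with precision $o(k)$ in the exponent. I would first try an induction on $d$: project along the long direction and use \Cref{thm:few-translates} on the projected set, whose size in the one-dimensional direction is bounded via $\gamma \approx C\log(2r)/r$. I expect the delicate bookkeeping of lower-order terms in $\binom{d+1}{2}$ to be the main difficulty.
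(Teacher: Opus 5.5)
\textbf{Gap in the rank $r\ge 2$ case.} From \Cref{thm:few-translates} you only extract $|X|\le(1+o(1))\,m/(r+1)$. You then assert that this is much smaller than $\lambda_{k,m}(d)\approx(m-(d-1)k)/2$, and that is false in general. For $2\le r<d$ one has $m/(r+1)>(m-(d-1)k)/2$ whenever $m<\frac{(r+1)(d-1)}{r-1}k$. This range always meets the admissible range $m\ge(d+1)k-\binom{d+1}{2}$, because $\frac{(r+1)(d-1)}{r-1}>d+1$ when $r<d$.

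Concretely, take $d=3$ and $m=4k$.
\begin{itemize}
\item Then $\lambda_{k,m}(3)=k+1$, so the target is $\binom{k+1}{k-2}=O(k^3)$.
\item A container of size $4k/3$ carries $\binom{4k/3}{k}=2^{\Theta(k)}$ subsets.
\item Such configurations really occur: two parallel lines of $(k-1)/2$ points each, plus one point off their plane, form a full-rank $B\subset\ZZ^3$ with $|B+B|=4k-6$ whose bulk has robust rank $2$.
\end{itemize}
Your argument gives nothing better than $4k/3$ here. So the conclusion ``hence one fibre must carry almost all of $B$'' does not follow, and the proof breaks.

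\textbf{The missing idea} is the correction term $(d-r)k$ of \Cref{lem:size-of-Y}. Since $B\subset X$ has full rank $d$ while the robust part $X'$ of the container has rank $r$, you can pick $x_1,\dots,x_{d-r}\in B$, each raising the rank of $X'$ by one. The translates $x_i+(B\cap X')$ are then:
\begin{itemize}
\item pairwise disjoint and disjoint from $X'+X'$;
\item contained in $B+B$;
\item each of size at least $(1-o(1))k$.
\end{itemize}
Hence $|Y\cap(X'+X')|\le m-(1-o(1))(d-r)k$. After supersaturation this gives $|X|\le(1+o(1))\frac{m-(d-r)k}{r+1}$, which equals $k$ in the example above. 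You do use this disjoint-translates observation, but only in the one-line case. It must be applied relative to the rank-$r$ robust part for every $r\in[d]$.

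\textbf{Two further problems.} First, the bound $|X+T|\le(1+o(1))m$ is not available.
\begin{itemize}
\item The set $T\subset X$ from \Cref{thm:few-translates} depends on $X$ and need not lie in $B$, so $X+T\not\subset B+B$.
\item A set $S\supset B+B$ cannot be part of a fingerprint either: there are far too many such $S$.
\item What \Cref{thm:containers-sumset} actually provides is $Y\subset B+B$ with few pairs of $X^2$ summing outside $Y$.
\end{itemize}
Turning the few-translates bound into $|Y\cap(X'+X')|\ge(1-\gamma)(r+1)|X'|$ requires the iterative supersaturation argument of \Cref{thm:supsat}. That argument needs \emph{robust} rank, so that deleting the used translates keeps the rank at least $r$; this is the reason for \Cref{lem:robustify}. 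Passing to a possibly lower-rank robust part is exactly what makes the $(d-r)k$ correction indispensable.

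Second, your claim that a bound on $|X|$ alone cannot suffice is mistaken. The estimate $|X|\le(1+5\gamma)\max_{r\in[d]}\frac{m-(d-r)k}{r+1}$ already yields $\binom{|X|}{k}\le 2^{o(k)}\binom{\lambda_{k,m}(d)}{k-d+1}$ (\Cref{lem:binom-main}).
\begin{itemize}
\item The maximum is essentially attained at $r=1$, and $\frac{m-(d-1)k}{2}\le\lambda_{k,m}(d)+d-1$.
\item Since $d\le\sigma\le k^{\alpha}$, replacing $\binom{\lambda_{k,m}(d)+d-1}{k}$ by $\binom{\lambda_{k,m}(d)}{k-d+1}$ costs only $\sigma^{O(\sigma)}=2^{o(k)}$.
\end{itemize}
So the one-line-plus-outliers structure theorem you aim for is unnecessary.
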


Our approach to proving \Cref{lem:reduction-to-Zd} is to use a container theorem for sumsets, proved by \citet{campos2020number} via the asymmetric container lemma of \citet{morris2024asymmetric}.
This result, presented here as \Cref{thm:containers-sumset}, produces a small family of pairs of sets, called container-pairs, such that for every $B \in \Lambda^{(d)}_{k, m}$ there is a container-pair $(X, Y)$ with $B \subset X$ and $Y \subset B + B$.
Moreover, either $X$ is small or the set $Y$ approximates the sumset $B + B$ according to a notion that we will introduce later.
We will combine \citeauthor{campos2020number}'s container theorem for sumsets with a supersaturation result (\Cref{thm:supsat}), effectively converting this notion of $Y$ approximating $X+X$ into an upper bound on the size of $X$.

Consequently, we obtain a collection $\containers$ of subsets of $\Sigma_d$ with
\begin{equation}\label{eq:size-of-our-container-family-overview}
  |\containers| = 2^{o(k)}
\end{equation}
such that:
\begin{enumerate}[(a)]
  \item For every $B \in \Lambda^{(d)}_{k, m}$, there is $X \in \containers$ such that $B \subset X$.
    \label{item:containers-contain-overview}
    \smallskip
  \item For all $X \in \containers$,
    \begin{equation}\label{eq:containers-are-small-overview}
      |X| \le \big(1 + o(1)\big) \, \max_{r \in [d]} \, \frac{m - (d - r)k}{r+1}.
    \end{equation}\label{item:containers-are-small-overview}
\end{enumerate}
\vspace{-\smallskipamount}
See \Cref{thm:containers} for the precise statement of this result.
It is essential that we are in the smaller host set $\Sigma_d=[-s,s]^d \cap \ZZ^d$ rather than in $[n]$ for two reasons.
The first, and more straightforward, is that the size of the container family $\containers$ depends on the size of $\Sigma_d$.
The second, more subtle, reason is that applying the containers for sumsets theorem of \citep{campos2020number} (\Cref{thm:containers-sumset}) to prove our container result (\Cref{thm:containers}) requires
\begin{equation}\label{eq:upperBoundOnM}
  m \le k^2/(\log |\Sigma_d|)^3,
\end{equation}
which follows from \eqref{eq:value-of-s-overview}, our bound on $s$.
Recently,  \citet{liu2024number} improved the required upper bound on $m$ for the container statement.
However, even replacing the right-hand side of \eqref{eq:upperBoundOnM} by $k^2/(\log |\Sigma_d|)^{2}$ still requires an upper bound on $|\Sigma_d|$, so using their results would not simplify our proof.

Another important feature of working in $\ZZ^d$ is that rank is monotone under inclusion.
Thus, if $B \subset X \subset \ZZ^d$ and $\rank(B) = d$, then $\rank(X) = d$ as well.
Consequently, we may assume that the containers, like $B$, have full rank.
This simple observation will be important when we apply the supersaturation result to bound the size of a container $X$.

Using the container family $\cX$ and its properties, we can explain how we count the sets $B \in \Lambda^{(d)}_{k, m}$.
First, notice that, by the first property of $\cX$ (\cref{item:containers-contain-overview} above), it suffices to choose a container $X \in \containers$ and then choose $B \subset X$.
As our container family is small, by \eqref{eq:size-of-our-container-family-overview}, the choices of $X$ contribute only a factor of $2^{o(k)}$.
For each fixed container $X$, then, there are $\binom{|X|}{k}$ choices for $B$.
Combining the choices for $X \in \cX$ and $B \subset X$ then yields
\begin{equation*}
  \big|\Lambda^{(d)}_{k, m}\big| \le \sum_{X \in \containers} \binom{|X|}{k} \le 2^{o(k)} \, \max_{r \in [d]}\, \binom{\frac{m - (d - r)k}{r+1}}{k} \le 2^{o(k)} \binom{\lambda_{k,m}(d)}{k-d+1},
\end{equation*}
where the second inequality follows from the second property of the container family, \cref{item:containers-are-small-overview}, and the final inequality follows from standard binomial estimates (see Appendix \ref{sec:appendix-calc}).
This finishes the outline of the proof of \Cref{lem:reduction-to-Zd}, and hence of \Cref{thm:main}, up to proving our container theorem.

We now discuss how to derive the upper bound in \cref{item:containers-are-small-overview} from the container theorem for sumsets of \citet{campos2020number}, \Cref{thm:containers-sumset}.
To deduce that each container satisfies \eqref{eq:containers-are-small-overview}, \ie
\begin{equation}\label{eq:containers-are-small-repeated}
  |X| \le \big(1 + o(1)\big) \, \max_{r \in [d]} \,  \frac{m - (d - r)k}{r+1},
\end{equation}
we use the fact that $Y \subset B + B$ approximates $X + X$ in the following sense:
\begin{equation}\label{eq:few-missing-pairs-overview}
  |\{ (x_1, x_2) \in X^2 : x_1 + x_2 \notin Y \}| \le \eps |X|^{2},
\end{equation}
for some small and carefully chosen $\eps > 0$ depending on $k$,  $d$ and $\sigma$.

The proof of \eqref{eq:containers-are-small-repeated} is somewhat intricate.
We therefore focus here on one of its main ideas: deriving a strong lower bound for $|Y|$ from the approximation in \eqref{eq:few-missing-pairs-overview}.
To this end, we say that a set $X$ has $\delta$-robust rank $r$ if every subset $X' \subseteq X$ with $|X'| \ge (1 - \delta)|X|$ satisfies $\rank(X') \geq r$.

\begin{restatable}{thm}{supsat}\label{thm:supsat}
  There exists an absolute constant $C>0$, and for every $0<\gamma<1/2$ there exists $c=c(\gamma)>0$, such that for all $d,r\in\NN$ and $0<\delta<\gamma$, the following holds.
  For all finite sets $X \subset \ZZ^d$ with $|X| \ge 40 (r + 1)^2 / \gamma$ and $\delta$-robust rank $r$, if $Y \subset X + X$ satisfies
  \begin{equation*}
    |\{ (x_1, x_2) \in X^2 : x_1 + x_2 \notin Y \}| \le c \delta r^{-C}|X|^{2},
  \end{equation*}
  then
  \begin{equation}\label{eq:supsat-large-y}
    |Y| \geq (1 - \gamma) (r + 1) |X|.
  \end{equation}
\end{restatable}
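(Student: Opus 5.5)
We deduce \Cref{thm:supsat} from \Cref{thm:few-translates} by a random-sampling/averaging argument on the missing pairs. Call a pair $(x_1,x_2)\in X^2$ \emph{bad} if $x_1+x_2\notin Y$, and write $\beta = c\delta r^{-C}$ for the bad-pair density, so there are at most $\beta|X|^2$ bad pairs. The plan is to find a set $T\subset X$ of size at most $C_0 r^{C_0}$ (with $C_0$ the constant of \Cref{thm:few-translates}) such that $X+T$ is almost entirely contained in $Y$, and to find it inside a large subset $X'\subset X$ that still has rank at least $r$.

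\textbf{Step 1 (cleaning).} Let $X_1$ be the set of $x\in X$ lying in more than $\sqrt{\beta}\,|X|$ bad pairs (as a first or second coordinate). Double counting gives $|X_1|\le 2\sqrt\beta |X|$. Choose $c$ and $C$ so that $2\sqrt\beta\le\delta/2$. Put $X' = X\setminus X_1$. Then $|X'|\ge(1-\delta)|X|$, so by $\delta$-robust rank we have $\rank(X')\ge r$. Also $|X'|\ge |X|/2\ge r/(2\gamma')$ for $\gamma'$ a suitable fraction of $\gamma$, using the hypothesis $|X|\ge 40(r+1)^2/\gamma$.

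\textbf{Step 2 (apply \Cref{thm:few-translates}).} Apply \Cref{thm:few-translates} to $X'$ with parameter $\gamma/3$. This is allowed when $\gamma/3 > C_0\log(2r)/r$. The small-$r$ regime, where this fails, needs separate handling, and I expect it to be the main technical nuisance. There I would instead use the classical \Freiman's lemma (\Cref{stmt:freimansLemma} in its $\Z^d$ form) applied to $X'$ with $T=X'$. Since $r\le r_0(\gamma)$ is then bounded, the loss $\binom{r+1}{2}$ is absorbed by the lower bound on $|X|$, and the bad-pair count is controlled by taking $c(\gamma)$ small. In this case the resulting $|T|$ is not polynomial, but that is harmless because we only need \eqref{eq:supsat-large-y}. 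In the main case we obtain $T\subset X'$ with $|T|\le C_0r^{C_0}$ and $|X'+T|\ge(1-\gamma/3)(r+1)|X'|$.

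\textbf{Step 3 (transfer to $Y$).} Every $t\in T$ lies in at most $\sqrt\beta|X|$ bad pairs. Hence the number of pairs $(x,t)\in X'\times T$ with $x+t\notin Y$ is at most $|T|\sqrt\beta|X|$, and therefore
\[
|Y|\ge |X'+T| - |T|\sqrt\beta|X|.
\]
With $|T|\le C_0r^{C_0}$ and $\sqrt\beta=\sqrt{c\delta}\,r^{-C/2}$, taking $C\ge 2C_0+2$ and $c$ small makes the subtracted term at most $(\gamma/3)(r+1)|X|$.

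Combining the steps, and using $|X'|\ge(1-\delta)|X|$ with $\delta<\gamma$, we get
\[
|Y|\ge(1-\gamma/3)(1-\delta)(r+1)|X|-(\gamma/3)(r+1)|X|.
\]
This falls slightly short of $(1-\gamma)$ because $\delta$ can be close to $\gamma$. To fix this, I would remove only a $\delta/2$-fraction in Step 1; robust rank still applies since $\delta/2<\delta$. I would also replace the factor $\gamma/3$ in Step 2 by a parameter $\gamma''$ chosen depending on $\gamma$ and $\delta$, so that the product $(1-\gamma'')(1-\delta/2)-\gamma''$ is at least $1-\gamma$. Balancing these constants is the main obstacle.
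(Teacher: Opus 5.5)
There are two genuine gaps: the degree threshold in Step 1 fails for small $\delta$, and the small-$r$ regime is missing an ingredient. Your architecture is otherwise sound: remove high bad-degree elements, apply \Cref{thm:few-translates} once to what remains, then transfer to $Y$. It is essentially a non-iterative form of the paper's proof, which greedily extracts $\lceil\delta|X|/4\rceil$ elements of $X$, each producing many sums outside $Y$.

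\textbf{Step 1 threshold.} The requirement $2\sqrt\beta\le\delta/2$ with $\beta=c\delta r^{-C}$ is equivalent to $\delta\ge 16cr^{-C}$. No choice of $c=c(\gamma)$ and $C$ can arrange this, because $\delta\in(0,\gamma)$ is quantified afterwards and may be arbitrarily small. Taking a square root throws away the linear dependence on $\delta$ in the hypothesis. Use the threshold $(\beta/\delta)\cdot 2|X| = 2cr^{-C}|X|$ instead. Since bad pairs are symmetric, fewer than $\delta|X|/2$ elements have more than $2cr^{-C}|X|$ bad partners, so robust rank applies to $X'$. The loss in Step 3 becomes $|T|\cdot 2cr^{-C}|X|$, which is at most $(\gamma/4)(r+1)|X|$ once $C\ge C_0$ and $c$ is small. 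The final balancing is then trivial. Applying \Cref{thm:few-translates} with parameter $\gamma/4$ gives $|Y|\ge\big((1-\gamma/4)(1-\delta/2)-\gamma/4\big)(r+1)|X|\ge(1-\gamma)(r+1)|X|$, since $\delta<\gamma$. This parameter must not depend on $\delta$, otherwise the small-$r$ cutoff, and hence $c$, would depend on $\delta$.

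\textbf{Small-$r$ regime.} Taking $T=X'$ with \Freiman's lemma destroys Step 3. The loss there is $|T|$ times the maximal bad degree, which is of order $c r^{-C}|X|^2$. Equivalently, $X'+X'$ may contain up to $c\delta r^{-C}|X|^2$ sums outside $Y$. Since $|X|$ is only bounded below, this is not $O(\gamma(r+1)|X|)$ for any choice of $c(\gamma)$. So a large $T$ is not harmless: the method needs $|T|$ bounded independently of $|X|$ in every regime.

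What is missing is a bounded-translates version of \Freiman's lemma for bounded rank. The paper uses \Cref{thm:jing-mudgal}, which gives $M(r)$ translates with $|X'+T|\ge(r+1)|X'|-5(r+1)^3$. In this regime $r\le r_0(\gamma)$, so $|T|\le M(\gamma)$, and one takes $c\le\gamma/(8M(\gamma))$. The error $5(r+1)^3$ is at most $(\gamma/4)(r+1)|X'|$ exactly because $|X'|\ge|X|/2\ge 20(r+1)^2/\gamma$. This is where the hypothesis $|X|\ge 40(r+1)^2/\gamma$ is needed. For $r=1$ this is \Cref{thm:blt} with three translates, but for general bounded $r$ it is a nontrivial input. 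With these two repairs your argument goes through.
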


While the container $X$ need not have $\delta$-robust rank $r$ for any $r \in [d]$, a simple greedy argument (see \Cref{lem:robustify}) guarantees the existence of a large robust subset of $X$.
Moreover, since $Y \subset B + B$, we trivially have $|Y| \le m$, which, if combined with \eqref{eq:supsat-large-y}, would yield something like $|X| \le m / (r + 1)$.
To deduce \eqref{eq:containers-are-small-repeated} instead, \ie with the correction term $(d - r)k$, in \Cref{lem:size-of-Y} we rely on the entirety of $X$, and not only on its large robust subset.

We note that our recent work on random sparse Cayley graphs contains a supersaturation result \citep[Theorem~2.2]{campos2024independence} that is closely related to \Cref{thm:supsat}.
In fact, we derive \Cref{thm:supsat} from \Cref{thm:few-translates} by adapting the ``few-translates implies supersaturation'' strategy introduced in \citep{campos2024independence} and subsequently used in \citep{nenadov2025remark}.
In addition to imposing a stronger condition on the number of missing pairs, our result gives a lower bound for $|Y|$ in \eqref{eq:supsat-large-y} that is twice as large as the corresponding bound in \cite{campos2024independence}.
This improvement follows from the strengthened version of \Freiman's lemma via few translates \Cref{thm:few-translates}, which we discuss next.

\subsection{Improved \Freiman's lemma via few translates}\label{sec:overview-freimans-lemma}

The main technical input in the proof of \Cref{thm:supsat} is \Cref{thm:few-translates}, restated below for convenience.
It is an approximate version of \Freiman's lemma, \Cref{stmt:freimansLemma}, that replaces one of the sets in the sumset by a subset with few translates.

\fewTranslates*

The above statement fits into a recent line of work on proving lower bounds for sumsets without using the full set in the sumset.
It was initiated by \citet{bollobas2022large}, who showed that, for any $A, B \subset \ZZ_p$ with $|A| = |B| \le p/3$, there is a constant-sized subset $B' \subset B$ such that
\begin{equation}\label{eq:bltModP}
  |A + B'| \geq |A| + |B| - 1.
\end{equation}
Their proof first established that, when $A, B \subset \Z$ and $|A| \ge |B|$, a 3-subset $B' \subset B$ suffices for \eqref{eq:bltModP}, see \Cref{thm:blt}, and they moreover conjectured that the same would hold in $\ZZ_p$.
This conjecture was subsequently proved by \citet{fox2022small}, whose argument uses, among other ingredients, a few-translates lower bound with a polynomial dependence on the doubling $\sigma$.
The key feature of this result in our setting (see \Cref{thm:flpz}) is that, whenever $r = \rank(X) = O(\sigma^{1/3})$, it recovers the bound in \eqref{eq:fewTranslatesBound} for $|X+T|$ for some $T \subset X$ with $|T| = O(r)$.

Even closer to \Cref{thm:few-translates} is a result of \citet{jing2023kemperman}.
They obtained a sharp few-translates version of \Freiman's lemma if the rank $r$ is fixed.
That is, they proved (see \Cref{thm:jing-mudgal}) that for any finite set $X \subset \Z^d$ with $\rank(X) = r$, there exists $T \subset X$ such that
\[
  |X + T| \geq (r+1)|X| - O(r^3)
\]
with $|T| \le C_r$ for some constant $C_r>0$ depending on $r$.
The dependence of $C_r$ on $r$ is unfortunately super-exponential, which is insufficient for our current application.
In fact, while none of these results alone yield \Cref{thm:few-translates}, we will use all of them as part of our argument in \Cref{sec:supsat}.

In \cite[Theorem~1.3]{campos2024independence}, we proved a variant of \Cref{thm:few-translates} with $|T| = O_\gamma(r)$ that obtains only
\[
  |X + T| \ge (1 - \gamma) \frac{r + 1}{2} |X|,
\]
\ie it loses a factor of $1/2$ compared to \eqref{eq:fewTranslatesBound}.
This constant factor loss is due to the first step in the proof, a greedy geometric argument that either finds a set of translates with the desired properties, or determines that a large portion of $X$ lies in a lower-dimensional subspace.
As the missing factor of $2$ is essential for our current application, and we could not improve this simple greedy argument to obtain \eqref{eq:fewTranslatesBound}, we relied on a different strategy to establish \Cref{thm:few-translates}.

Our approach to overcome that issue is to separate the problem in two cases, depending on $\doubling[X]$, the doubling of the set $X$.
Consider first when $\doubling[X] = O(r^3)$, which is the most interesting case.
In this case, we use this bound on the doubling to replace the simple greedy argument and directly determine that there is a large subset of $X$ in a subspace of low dimension.
To find such a subset, we rely on the ``weak'' Polynomial \Freiman--Ruzsa theorem of \citet*{gowers2023conjecture}, a consequence of their proof of Marton's conjecture in characteristic 2 (also known as the PFR conjecture in $\FF_2^n$).
This theorem (which was shown in \cite{green2025sumsets} to follow from PFR in $\FF_2^n$) says that every finite $X \subset \ZZ^d$ with doubling at most $\doubling$ has a subset $X_0 \subset X$ such that $|X_0| \ge \doubling^{-C} |X|$ and $\rank(X_0) \le C \log \doubling$, for some fixed constant $C > 0$ (see \Cref{thm:weak-pfr}).
The role of this large, low-dimensional set $X_0$ is to
provide a structured base over which we decompose $X$ into ``fibres''.
Once this is achieved, we can then apply the (intricate) machinery of the ``weighted'' version of \Freiman's lemma developed in \cite{campos2024independence} to these fibres, with the weights recording their sizes.

The other case, when $\doubling[X] \geq C r^3$ for some absolute constant $C>0$, follows immediately from the result of \citet*[Theorem 1.1]{fox2022small}, which provides a set $T \subset X$ such that $|T| = O(r^3)$ and $|X + T| \ge (r + 1)|X|$.

\subsection{Organisation of the paper}
In \Cref{sec:main}, we prove the upper bound in \Cref{thm:main} under the assumption that our container theorem, \Cref{thm:containers},  and our version of \Freiman's theorem, \Cref{thm:fullrank-freimans-thm}, hold.
\Cref{sec:containers} is dedicated to prove \Cref{thm:containers} using the container theorem for sets with small sumset, \Cref{thm:containers-sumset}, and the supersaturation result, \Cref{thm:supsat}, whose proof is given in \Cref{sec:supsat} together with the proof of \Cref{thm:few-translates}.
\Cref{sec:fullrank-freimans-thm} contains the proof of the full rank version of \Freiman's theorem, and the final \namecref{sec:construction}, \Cref{sec:construction}, completes the proof of \Cref{thm:main} with the corresponding lower bound.

Throughout the paper, we omit floors and ceilings in our calculations, except when doing so would be likely to cause confusion.

\section{Proof of Theorem~\ref{thm:main}}\label{sec:main}

The goal of this section is to prove the upper bound in \Cref{thm:main} assuming that both our container theorem, \Cref{thm:containers}, and our full rank version of \Freiman's theorem, \Cref{thm:fullrank-freimans-thm}, hold.
That is, denoting $\Lambda=\Lambda_{n,k,m}$ for brevity, we will establish that
\begin{equation*}
  | \Lambda | \leq 2^{o(k)} \sum_{d}\, n^{d + 1}\binom{\lambda_{k,m}(d)}{k - d + 1},
\end{equation*}
where the sum is taken over $d \in [k-1]$ satisfying
\begin{equation}\label{eq:restriction-d}
  (d+1)k-\binom{d+1}{2} \leq m.
\end{equation}
Recall that $\lambda_{k,m}(d)$ is defined in \eqref{eq:def-of-lambda} as
\begin{equation*}
  \lambda_{k,m}(d) = \bigg\lfloor \frac{m - (d-1)k + \binom{d+1}{2}}{2} \bigg\rfloor -d + 1.
\end{equation*}

To start, recall the definition $\Sigma_d=[-s,s]^d \cap \ZZ^d$ for each $d \in \NN$, where
\begin{equation*}
  s = \exp\big(C_0 (\doubling \log \doubling)^3\big)k^{2\doubling}
\end{equation*}
for some constant $C_0$ given by \Cref{thm:fullrank-freimans-thm}.
Further recall that we define $\Lambda^{(d)}_{k,m}$, for each $d \in \NN$, to be the collection of full-rank subsets $B\subset \Sigma_d$ with $|B|=k$ and $|B+B| \leq m$.
As outlined in \Cref{sec:overview}, the first step in our proof of \Cref{thm:main} is to reduce it to proving \Cref{lem:reduction-to-Zd}, restated below for convenience.
This \namecref{lem:reduction-to-Zd} gives an upper bound on the size of $\Lambda^{(d)}_{k,m}$ for each $d \in [k-1]$ satisfying \eqref{eq:restriction-d}.

\RedToZd*

We will show that \Cref{lem:reduction-to-Zd} follows from the container statement described informally in \Cref{sec:overview}, which we now formally state.
Its proof appears in the next \namecref{sec:containers}, \Cref{sec:containers}.

\begin{restatable}{thm}{containersX}\label{thm:containers}
  Let $m, t, d, k \in \NN$ and $\sigma=m/k>0$.
  Let also $0 < \gamma < 1/8$ and assume that the preceding parameters satisfy the conditions
  \begin{equation}\label{eq:conditions-for-containers}
    \sigma^2 \leq \log t \leq \sqrt{m} \qquad \text{and} \qquad k \geq 80(d+1)^2/\gamma.
  \end{equation}
  Finally, let $\containersHost \subset \ZZ^d$ be a set with $|\containersHost| = t$.
  There exists $M=M(\gamma)>0$ depending only on $\gamma$ such that the following holds.
  There is a constant $C >0$ and a family of sets $\containers \subset 2^\containersHost$ of size at most
  \begin{equation*}
    |\containers| \le \exp\Big( M d^{C} \sqrt{\sigma^3 k (\log t)^{3}} \Big)
  \end{equation*}
  such that:
  \begin{enumerate}[(a)]
    \item For all $B \subset \containersHost$ with $|B| = k$, $|B + B| \le m$ and $\rank(B) = d$, there is $X \in \containers$ such that $B \subset X$.
      \label{item:containers-contain}

    \item Every $X \in \containers$ satisfies
      \begin{equation}\label{eq:containers-are-small}
        |X| \le \max_{r \in [d]} \, (1 + 5\gamma) \, \frac{m - (d - r)k}{r+1}.
      \end{equation}
      \label{item:containers-are-small}
  \end{enumerate}
\end{restatable}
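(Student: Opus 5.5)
We will build $\containers$ by iterating the container theorem for sumsets of \citet{campos2020number} (\Cref{thm:containers-sumset}) and using the supersaturation result \Cref{thm:supsat} to control the final containers.

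\textbf{Iteration.} Start from the trivial container-pair $(\containersHost, \emptyset)$. At each step, apply \Cref{thm:containers-sumset} to the current container $X$, with a missing-pairs parameter $\eps = c\,\delta\, d^{-C}$, where $c=c(\gamma)$ and $C$ come from \Cref{thm:supsat} and $\delta$ is a small function of $\gamma$ and $d$. This produces a family of pairs $(X',Y)$ with three properties. Every $B\subset X$ with $|B|=k$ and $|B+B|\le m$ lies in some $X'$ with $Y\subset B+B$. Either $|X'|\le(1-\eps')|X|$ or $Y$ misses at most $\eps|X'|^2$ pairs of $X'$. The number of pairs is bounded in terms of $\sigma$, $k$, $\log t$ and $\eps$.

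The condition $\sigma^2\le\log t\le\sqrt m$ in \eqref{eq:conditions-for-containers} is what allows \Cref{thm:containers-sumset} to be applied. The shrinking steps can occur only $O(\eps^{-1}\log t)$ times before $|X|$ drops below $k$. The resulting family size is therefore $\exp(M d^{C}\sqrt{\sigma^3 k(\log t)^3})$, absorbing the polynomial dependence on $d$ through $\eps^{-1}$. Property \ref{item:containers-contain} follows by following $B$ down the tree. Since $\rank(B)=d$ and rank is monotone, every container of a full-rank $B$ has full rank, so we discard containers that do not.

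\textbf{Bounding a final container.} Let $(X,Y)$ be a final pair with few missing pairs, with $|Y|\le m$ because $Y\subset B+B$. Apply the greedy robustification (\Cref{lem:robustify}). Starting from $X_d=X$ with rank $d$, repeatedly ask whether there is a subset missing a $\delta$-fraction with rank lower than the current $\delta$-robust rank target. If so, pass to the largest subset of lower rank. After at most $d$ steps we find $r\in[d]$ and $X_r\subset X$ with $|X_r|\ge(1-\delta)^{d-r}|X|\ge(1-\gamma)|X|$, with $\delta$-robust rank $r$ and contained in an affine subspace $V$ of dimension $r$.

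Pairs of $X_r$ missing $Y$ number at most $\eps|X|^2\le 2\eps|X_r|^2$. Since $|X_r|\ge k/2\ge 40(r+1)^2/\gamma$, \Cref{thm:supsat} gives
\[|Y\cap(X_r+X_r)|\ge(1-\gamma)(r+1)|X_r|.\]

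\textbf{The correction term $(d-r)k$.} Since $B$ is full rank, $B\setminus V$ contains points that raise the rank from $r$ to $d$. Choose $b_1,\dots,b_{d-r}\in B$ affinely independent modulo $V$. The sets $b_i + X$ lie in distinct translates $b_i+2V$… more precisely, the affine subspaces $V+b_i$ are distinct and disjoint from $V+V$.

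Consider the pairs $(x,b_i)$ with $x\in X$. For each $i$, most sums $x+b_i$ lie in $Y$, since $b_i$ sits in $X$ with few missing pairs. This is handled by averaging: choose the $b_i$ among the elements of $B$ having few missing partners, and if necessary pass to a multiset argument. The sums $x+b_i$ with $x\in X_r$ then contribute about $|X_r|$ new elements of $Y$ lying in $b_i+V$, disjoint from $X_r+X_r\subset 2V$ and from each other.

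This would give $|Y|\ge(r+1+d-r)|X_r|$, which is too strong. The right statement is subtler. Instead, use that $B$ itself contributes: $B+B\supseteq Y$ has at least $(d-r)k$ elements outside $2V$ beyond those counted. Indeed, each $b_i$ added to the part $B\cap V$ gives translates, so combining with $|Y\cap 2V|$ we get
\[m\ge|B+B|\ge(1-\gamma)(r+1)|X_r|+(d-r)k-O(d^2).\]
Rearranging, with $|X|\le|X_r|/(1-\gamma)$ and absorbing errors into $5\gamma$, gives \eqref{eq:containers-are-small}.

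\textbf{Main obstacle.} I expect the main obstacle to be this last step: showing rigorously that $B+B$, or $Y$, has roughly $(d-r)k$ elements outside the subspace $2V$ carrying $Y\cap(X_r+X_r)$. The plan is to induct on $d-r$ via a Freiman-lemma style argument. Each rank-increasing element of $B$ outside $V$ yields $\ge |B\cap V'|$ fresh sums in a new parallel subspace. If $B$ has few points in $V$, one argues directly with \Cref{stmt:freimansLemma} applied to $B$ relative to the quotient by $V$.
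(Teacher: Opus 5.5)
There is a genuine gap, at exactly the step you flag. Your translate argument is the right one: choose $b_1,\dots,b_{d-r}\in B$ successively raising the rank over $V$, so that the sets $b_i+(B\cap V)$ are pairwise disjoint, lie in $B+B$, and avoid $2V\supseteq Y\cap(X_r+X_r)$. But it only gives
\[
|Y\cap(X_r+X_r)|+(d-r)\,|B\cap X_r|\le m,
\]
using $B\cap V=B\cap X_r$ (as $X_r=X\cap V$ and $B\subset X$). To replace $|B\cap X_r|$ by $k-o(k)$ you need $|X\setminus X_r|\ll k$. You only know $|X\setminus X_r|\le d\delta|X|$, with $|X|$ possibly of order $m=\sigma k$.

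With $\delta$ depending only on $\gamma$ and $d$, this is of order $\gamma\sigma k$. That exceeds $k$ once $\sigma\gg 1/\gamma$, and the hypotheses allow $\sigma$ up to $k^{1/3}$. Then nothing in your argument stops $B$ from lying mostly in $X\setminus X_r$. So the inequality with $(d-r)k-O(d^2)$ is unsupported. Your fallback, a Freiman-type lemma for $(B+B)\setminus 2V$ via the quotient by $V$, is only a plan, and it leaves the intermediate regime (where $B$ meets $V$ in a constant fraction) unaddressed. If proved for all full-rank $B$, it would be a genuinely different and stronger route, but it is not established here.

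The missing idea is to let $\delta$ depend on $\sigma$, as the paper does with $\delta=\gamma/(\sigma d^2)$.
\begin{itemize}
\item First check $|X|\le m$; the paper does this by applying \Cref{thm:supsat} with $r=1$ to $X$.
\item Then $|B\setminus X'|\le\delta d m=\gamma k/d$, so the $d-r$ translates yield $|Y'|\le m-(1-\gamma/d)(d-r)k$. This is \Cref{lem:size-of-Y}.
\item The loss satisfies $\gamma(d-r)k/d\le\gamma\bigl(m-(d-r)k\bigr)$. This uses $\sigma-(d-r)\ge 1$, i.e.\ $d\le\sigma$ from \Cref{prop:dim-doubling}, which applies because the hypotheses force $\sigma^3\le k$.
\end{itemize}
This choice forces $\eps^2\asymp c\gamma/(\sigma d^{C+2})$ in \Cref{thm:containers-sumset}. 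That extra factor of $\sigma$ is exactly the source of $\sigma^3$ in the bound on $|\containers|$; with your $\sigma$-free parameters, that exponent is asserted rather than derived.

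Separately, no iteration is needed, and \Cref{thm:containers-sumset} does not give the shrinking dichotomy you describe. One application yields either $|X|\le m/\log t$ or at most $\eps^2|X|^2$ missing pairs. The first branch cannot contain any $B$, since $m/\log t\le k/\sigma<k$ by $\sigma^2\le\log t$. The other hypothesis, $\log t\le\sqrt m$, is what supplies $m\ge(\log t)^2$. Iterating full applications $O(\eps^{-1}\log t)$ times, as you propose, would multiply the family sizes and spoil the count.
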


Instead of explicitly proving \Cref{lem:reduction-to-Zd}, we will deduce \Cref{thm:main} directly from \Cref{thm:containers}.
To do so, we require some intermediate results and definitions.
Recall that a \Freiman isomorphism $\phi : S \to T$ is a bijection such that for all $s_1, s_2, s_1', s_2' \in S$ with $s_1 + s_2 = s_1' + s_2'$, we have
\begin{equation*}
  \phi(s_1) + \phi(s_2) = \phi(s_1') + \phi(s_2')
\end{equation*}
and vice-versa, that is, both $\phi$ and $\phi^{-1}$ are \Freiman homomorphisms.
We further say that $A$ and $B$ are \Freiman isomorphic if there is a \Freiman isomorphism $\phi : A \to B$.
Recall that we also define the rank of a set $B \subset \ZZ^d$, denoted by $\rank(B)$, to be the smallest dimension $r \in \NN$ of an affine subspace of $\RR^d$ containing $B$, and that we say that $B$ has full rank if $\rank(B) = d$.
The \Freiman dimension of a set $A$, denoted by $\dimF(A)$, is the largest $d \in \NN$ such that there exists a set $B \subset \ZZ^d$ with full rank which is \Freiman isomorphic to $A$.
Finally, we say that a function $\phi : S \subset \RR^d \to \RR$ is affine if there exist $v_0, v_1, \ldots, v_d \in \RR$ such that
\[\phi(s_1,\ldots,s_d) = v_0 + \sum_{i=1}^d s_i v_i\]
for every $(s_1, \ldots, s_d) \in S$.

The first intermediate result that we require is a version of \Freiman's theorem, \Cref{thm:fullrank-freimans-thm} below.
It says that every $A \in \Lambda_{n, k, m}$ is \Freiman isomorphic to a full rank set $B \subset \Sigma_d \subset \ZZ^d$.

\begin{restatable}{thm}{fullrankfreimansthm}
  \label{thm:fullrank-freimans-thm}
  There exists $C_0 > 0$ such that the following holds.
  Let $n, k \in \NN$ and $1 < \doubling \le k$.
  For every $A \subset [n]$ with $|A| = k$ and $|A + A| \le \doubling k$, there exists $B \subset \ZZ^d$, for some $d \in \NN$, and a \Freiman isomorphism $\phi : B \to A$ such that
  \begin{enumerate}[(a)]
    \item $\rank(B) = d$, \label{item:fullrank-freimans-thm}
    \item the function $\phi$ is affine,
    \item $B \subset [-s, s]^d$, where
      \begin{equation}\label{eq:fullrank-freimans-thm-size-of-box}
        s = \exp(C_0 (\doubling \log \doubling)^3) \, k^{2\doubling}.
      \end{equation}
  \end{enumerate}
\end{restatable}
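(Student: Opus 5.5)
We will combine a quantitative \Freiman theorem with a reduction to full rank. First, by a standard quantitative version of \Freiman's theorem (the Sanders / Schoen bounds as packaged, e.g., via Chang's argument), $A$ is contained in a proper generalised arithmetic progression
\[
P = \{a_0 + x_1 q_1 + \dots + x_{d_0} q_{d_0} : 0 \le x_i < L_i\}
\]
with dimension $d_0 \le C(\doubling\log\doubling)$ and size $|P| \le \exp(C(\doubling\log\doubling)^3)\,k$, say. Properness means the map $\psi: \prod_i [0,L_i) \cap \ZZ^{d_0} \to P$, $x \mapsto a_0 + \sum x_i q_i$, is injective. Since $\psi$ is the restriction of an affine map from $\ZZ^{d_0}$ to $\ZZ$, the set $B_0 = \psi^{-1}(A)$ is \Freiman isomorphic to $A$ via the affine map $\psi$. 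For the \Freiman isomorphism property, the inverse direction needs some care. If $\psi(b_1)+\psi(b_2) = \psi(b_1')+\psi(b_2')$, then $b_1+b_2$ and $b_1'+b_2'$ are both points of the box $\prod[0,2L_i)$ mapped to the same element of $2P$. So we need $2$-properness of $P$, which the quantitative \Freiman theorem can be arranged to provide at the cost of a constant factor in the exponent.

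Next, $B_0$ may fail to have full rank in $\ZZ^{d_0}$. Let $V$ be the affine hull of $B_0$, of dimension $d \le d_0$. We choose an affine lattice isomorphism between $V \cap \ZZ^{d_0}$ and $\ZZ^d$. Concretely, we take a basis $w_1,\dots,w_d$ of the lattice $(V - b_0)\cap\ZZ^{d_0}$ for some $b_0 \in B_0$. Composing, we get an affine injection $\iota:\ZZ^d \to \ZZ^{d_0}$ with image $V\cap \ZZ^{d_0}$. We set $B = \iota^{-1}(B_0)$, which has full rank $d$, and $\phi = \psi\circ\iota$, which is affine and a \Freiman isomorphism since $\iota$ is an injective affine map onto a lattice coset. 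This gives (a) and (b).

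The main obstacle is (c): bounding the coordinates of $B$ by $s$. The coordinates of $B_0$ are at most $|P| \le \exp(O((\doubling\log\doubling)^3))k$. The lattice basis $w_i$ must be chosen to have small entries, and we need the inverse coordinates in that basis to be controlled. I would pick $d$ affinely independent points $b_1-b_0,\dots,b_d-b_0$ from $B_0$ spanning $V$. By Cramer's rule, every point of $B_0 - b_0$ has rational coordinates in this basis with denominator dividing a $d\times d$ minor determinant $D$. By Hadamard's inequality, $D \le (\sqrt{d_0}\,|P|)^{d}$, and the numerators are similarly bounded. Instead of a reduced lattice basis, it is simpler to use the sublattice $\frac1D\cdot$(span of $b_i-b_0$). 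That is, we map $x\mapsto D\cdot(\text{coordinates of }x-b_0)$, which is an integer-valued affine injection. The resulting entries are bounded by roughly $(d_0|P|)^{d}$. With $d \le d_0 \le C\doubling\log\doubling$ and, when needed, the cruder bound $d\le 2\doubling$ from \Freiman's lemma, this yields $s \le \exp(C_0(\doubling\log\doubling)^3)k^{2\doubling}$ after absorbing constants. The delicate point is making the exponent of $k$ exactly $2\doubling$. For this I would use \Cref{stmt:freimansLemma} to get $d+1 \le 2\doubling$, so that the factor $k^{d}$ coming from $|P|^d$ is at most $k^{2\doubling}$. The $\exp(\cdot)^{d}$ factor remains a concern, since it multiplies the exponent by $d = O(\doubling\log\doubling)$. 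This would require the GAP size bound $|P|\le \exp(O((\doubling\log \doubling)^2))k$, or a choice of $d_0 = O(\log\doubling)$-type structure. I expect tracking these exponents to be the genuine difficulty, and would adjust which quantitative \Freiman theorem is invoked accordingly.
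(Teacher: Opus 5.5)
Your structural steps are sound. You embed $A$ via a $2$-proper progression, and then use the Cramer's-rule map $x\mapsto \det(M')\,\theta(x-b_0)$, where $M'$ is a nonsingular $d\times d$ minor, on rows $i_1,\dots,i_d$, of the matrix with columns $b_j-b_0$. This map is an injective affine map on the affine hull of $B_0$, hence a \Freiman isomorphism onto a full-rank subset of $\ZZ^d$ with affine inverse. That is a legitimate, more elementary substitute for the lattice-basis lemma (\Cref{stmt:hunter-stronger-proper-gap}) used in the paper, and it gives a bound of the same shape $(O(d)\,|P|)^d$.

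The gap is in (c), which you flag but do not resolve. With your stated input $|P|\le \exp(C(\doubling\log\doubling)^3)k$ and the column-wise estimate $(\sqrt{d_0}\,|P|)^d$ with $d$ up to $2\doubling$, you only get $s\le \exp(O(\doubling^4(\log\doubling)^3))\,k^{2\doubling}$, which is not (c). So as written the proof does not establish the statement.

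The ingredient the paper uses is the precise form of Chang's theorem, in the Green--Tao version with $t=2$ (\Cref{thm:chang}): $A$ lies in a $2$-proper GAP with $|P|\le \exp(C\doubling^2(\log\doubling)^3)|A|$. Since $d\le \dimF(A)\le 2\doubling$ by \Freiman's lemma, raising $O(d)\,|P|$ to the power $d$ gives exactly $\exp(O((\doubling\log\doubling)^3))\,k^{2\doubling}$. This is where both factors in (c) come from. It also shows that exponent $\doubling^2(\log\doubling)^3$ suffices; your proposed requirement $\exp(O((\doubling\log\doubling)^2))$ is stronger than needed.

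Alternatively, you can close the gap inside your own framework without changing the \Freiman input: apply Hadamard's inequality to rows instead of columns. Every entry in row $i_j$ of $M'$, and of $M'$ with a column replaced by $(x-b_0)'$, has absolute value less than $L_{i_j}$. Hence every Cramer determinant is at most $d^{d/2}\prod_j L_{i_j}\le d^{d/2}\prod_i L_i = d^{d/2}|P|$, by properness. With $d\le 2\doubling$ this gives $s\le (2\doubling)^{\doubling}|P|$, which satisfies (c) even with your weaker bound on $|P|$.
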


\Cref{thm:fullrank-freimans-thm} follows from combining \citeauthor{chang2002polynomial}'s version of \Freiman's theorem~\cite{chang2002polynomial} with a linear algebraic lemma, due to \citet[Lemma~A.2]{green2022new}, that ensures that $B$ is full rank.
The proof of \Cref{thm:fullrank-freimans-thm} is simple, though somewhat technically tedious, so we defer it to \Cref{sec:fullrank-freimans-thm}.

The final missing ingredients in the proof of \Cref{thm:main} are \Cref{prop:counting-affine-maps}, a simple counting \namecref{prop:counting-affine-maps}, and \Cref{prop:dim-doubling}, a way to bound the \Freiman dimension of a set in terms of its doubling.

\begin{lem}\label{prop:counting-affine-maps}
  Let $n, d\in \NN$ and let $B \subset \ZZ^d$ be a full rank set.
  There are at most $n^{d+1}$ choices for an affine function $\phi : B \to [n]$.
\end{lem}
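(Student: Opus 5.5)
Since $B$ has full rank, it contains $d+1$ affinely independent points $b_0, b_1, \ldots, b_d$, and we fix such a choice once and for all, depending only on $B$. The plan is to show that an affine function $\phi : B \to [n]$ is completely determined by the tuple of values $(\phi(b_0), \phi(b_1), \ldots, \phi(b_d)) \in [n]^{d+1}$. Since there are at most $n^{d+1}$ such tuples, this gives the claimed bound.

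To see that the values on $b_0, \ldots, b_d$ determine $\phi$, write $\phi(s) = v_0 + \sum_{i=1}^d s_i v_i$ for some real coefficients $v_0, \ldots, v_d$. Evaluating at $b_0, \ldots, b_d$ gives a linear system for $(v_0, \ldots, v_d)$. Its coefficient matrix has rows $(1, b_j)$ for $j = 0, \ldots, d$, and it is invertible exactly because the $b_j$ are affinely independent: the vectors $b_j - b_0$, for $j = 1, \ldots, d$, are linearly independent. Hence the coefficients, and therefore $\phi$ on all of $B$, are uniquely determined by the $d+1$ prescribed values. Equivalently, every $b \in B$ is an affine combination $b = \sum_j \mu_j b_j$ with $\sum_j \mu_j = 1$, and any affine function then satisfies $\phi(b) = \sum_j \mu_j \phi(b_j)$.

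The only point needing care is the subtlety in the definition: an affine function on $S$ is a restriction of some affine map, and different coefficient vectors could in principle give the same function on $B$. Full rank removes this issue entirely, since the coefficient vector is itself unique. Injectivity of $\phi \mapsto (\phi(b_j))_j$ suffices in any case, so there is no real obstacle here.
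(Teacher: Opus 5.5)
Your proposal is correct and follows essentially the same argument as the paper. You fix $d+1$ affinely independent points $b_0,\ldots,b_d \in B$ and observe that an affine $\phi$ is determined by its values there, using either the invertible $(d+1)\times(d+1)$ system or the affine combination $\phi(b)=\sum_j \mu_j \phi(b_j)$. The latter is exactly the paper's relation $\phi(x)-\phi(b_0)=\sum_i \theta_i\bigl(\phi(b_i)-\phi(b_0)\bigr)$.
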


\begin{proof}
  We will show that $\phi$ is completely determined by choosing $b_0, b_1, \ldots, b_d \in B$ such that $b_1-b_0, \ldots, b_d-b_0$ form a basis for $\RR^d$, and then choosing $\phi(b_0), \ldots, \phi(b_d) \in [n]$.
  As there are at most $n^{d+1}$ choices for $\phi(b_0), \ldots, \phi(b_d) \in [n]$, this will complete the proof.
  Fixing such a choice of $b_0, b_1, \ldots, b_d \in B$, we have, for any $x \in B$, that
  \begin{equation}\label{eq:x-linear-comb}
    x - b_0 = \sum_{i=1}^d \theta_i (b_i - b_0)
  \end{equation}
  for some choice of $\theta_1, \ldots, \theta_d \in \RR$.
  It then follows from \eqref{eq:x-linear-comb} and the fact that $\phi$ is affine that
  \[
    \phi(x) - \phi(b_0) = \sum_{i=1}^d \theta_i \big(\phi(b_i) - \phi(b_0)\big),
  \]
  which determines $\phi(x)$ for every $x \in B$, as we wanted to show.
\end{proof}

\begin{lem}\label{prop:dim-doubling}
  Let $k, m \in \N$ and define $\sigma =m/k>0$.
  For all $d \in [k-1]$, if
  \begin{equation}\label{eq:freimansLemma}
    (d+1)k-\binom{d+1}{2} \le m
  \end{equation}
  then,
  \begin{equation}\label{eq:boundsOnD}
    d \leq 2\sigma-2 \qquad \text{and} \qquad d \leq \sigma-1+\frac{2\sigma^2}{k}.
  \end{equation}
  In particular, if $\sigma \leq \sqrt{k/2}$, then \eqref{eq:boundsOnD} implies that $d \leq \sigma$.
\end{lem}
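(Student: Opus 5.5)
We are given $(d+1)k - \binom{d+1}{2} \le m = \sigma k$ with $d \in [k-1]$, and we want two bounds on $d$. Write the hypothesis as
\[
  (d+1)\Big(k - \frac{d}{2}\Big) \le \sigma k .
\]
Since $d \le k-1$, we have $k - d/2 \ge k - (k-1)/2 = (k+1)/2 > k/2$.

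\emph{First bound.} Substituting $k - d/2 > k/2$ into the hypothesis gives $(d+1)k/2 < \sigma k$. Hence $d+1 < 2\sigma$, which gives $d \le 2\sigma - 2$. Strictly, this yields $d < 2\sigma-1$. If $\sigma$ is not assumed to make $2\sigma$ an integer, I would instead rewrite the hypothesis as $(d+1)(2k-d) \le 2\sigma k$. Using $2k - d \ge k+1$, this gives $(d+1)(k+1) \le 2\sigma k$, so $d+1 \le 2\sigma k/(k+1) < 2\sigma$. To get the stated $d \le 2\sigma - 2$, I would use integrality of $m$ and $d$: the inequality $(d+1)(2k-d) \le 2m$ with $d$ an integer. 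If $d = 2\sigma - 1 - \theta$ for some $0 < \theta < 1$, the slack should be checked carefully. I expect the intended bound to follow from the cruder estimate, possibly with the convention that floors are omitted. This integrality bookkeeping is the only delicate point, and I would handle it by direct case-checking.

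\emph{Second bound.} Rearrange the hypothesis as
\[
  d + 1 \le \sigma + \frac{d(d+1)}{2k}.
\]
By the first bound, $d + 1 \le 2\sigma$ and $d \le 2\sigma$, so $d(d+1) \le 4\sigma^2$. Therefore $d + 1 \le \sigma + 2\sigma^2/k$, which is exactly $d \le \sigma - 1 + 2\sigma^2/k$.

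\emph{The final claim.} If $\sigma \le \sqrt{k/2}$, then $2\sigma^2/k \le 1$. The second bound then gives $d \le \sigma - 1 + 1 = \sigma$.

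The main obstacle is purely the arithmetic in the first bound, namely making sure the constant comes out as $2\sigma - 2$ rather than $2\sigma - 1$. The rest is a one-line substitution.
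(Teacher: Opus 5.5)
The gap is in the first inequality. You only establish $d+1<2\sigma$ (or, in your refined form, $d+1\le 2\sigma k/(k+1)$), and that does not give $d\le 2\sigma-2$. The integrality argument or ``direct case-checking'' you defer to cannot close this. From $d+1<2\sigma$ and $d\in\ZZ$ you only get $d\le\lceil 2\sigma\rceil-2$, which exceeds $2\sigma-2$ whenever $2\sigma\notin\ZZ$; for instance $2\sigma=11/2$ still allows $d=4>7/2$. Appealing to omitted floors is not an argument either. The loss comes from the step $k-d/2\ge (k+1)/2$. There you insert the extreme value $d=k-1$ into the factor $k-d/2$ of $(d+1)(k-d/2)$, while the factor $d+1$ is left free, and this is too lossy to reach the stated constant.

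The missing step is elementary and needs no integrality: apply $d+1\le k$ to the binomial coefficient instead. This gives $\binom{d+1}{2}=\frac{d(d+1)}{2}\le\frac{dk}{2}$, so the hypothesis yields $\big(\tfrac{d}{2}+1\big)k=(d+1)k-\tfrac{dk}{2}\le\sigma k$, which is exactly $d\le 2\sigma-2$. (Equivalently, $(d+1)(2k-d)-(d+2)k=d(k-d-1)\ge 0$.) This is the paper's argument.

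Your derivation of the second inequality and of the final claim is correct as written. It matches the paper, which bounds $\binom{d+1}{2}\le\binom{2\sigma-1}{2}\le 2\sigma^2$. That part survives your gap, since $d+1<2\sigma$ already gives $d(d+1)<4\sigma^2$.
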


\begin{proof}
  From the assumption that $k \geq d+1$, we obtain
  \[(d+1)k - \frac{dk}{2} \leq(d+1)k- \binom{d+1}{2} \leq  \sigma k,\]
  which establishes the first inequality in \eqref{eq:boundsOnD}.
  Replacing this improved upper bound in \eqref{eq:freimansLemma},
  \[(d+1)k - \binom{2 \sigma-1}{2} \leq \sigma k\]
  which then implies that
  \[d \leq \sigma -1 + \frac{2\sigma^2}{k}\]
  as we wanted to show.
\end{proof}

We can now prove \Cref{thm:main} from \Cref{thm:containers}.

\begin{proof}[Proof of \Cref{thm:main} assuming \Cref{thm:containers}]
  Note first that we can assume that $k$ is sufficiently large, since the error term is of the form $2^{o(k)}$.
  We first claim that every $A \in \Lambda$ can be obtained by first choosing $d \in \NN$ satisfying
  \begin{equation}\label{eq:freimans-lem-conseq}
    (d+1)k-\binom{d+1}{2} \leq m,
  \end{equation}
  choosing $B \in \Lambda^{(d)}_{k,m}$, and then choosing an affine map $\phi : B \subset \Sigma_d \to A \subset [n]$.
  Fixing $A \in \Lambda$ and applying \Cref{thm:fullrank-freimans-thm}, we obtain $B \subset \ZZ^d$ for some $d$, which is moreover a subset of $\Sigma_d$ with full rank, and that is \Freiman isomorphic to $A$ via an affine map $\phi:B \to A$.
  By the definition of \Freiman dimension and the fact that $B$ is full rank, we conclude that $d \leq \dimF(A) \le k - 1$, so \Freiman's lemma (\Cref{stmt:freimansLemma}) implies that $d$ satisfies \eqref{eq:freimans-lem-conseq}.
  Moreover, note that $|B|=k$ and $|B + B| = |A + A| \leq m$ follows from $\phi$ being affine, and therefore $B \in \Lambda^{(d)}_{k,m}$, establishing our claim.

  With the claim established, we count the number of $A \in \Lambda$ by applying \Cref{prop:counting-affine-maps} to count the choices for the affine map, obtaining as a result
  \begin{equation}\label{eq:first-bound-lambda}
    |\Lambda| \leq \sum_d n^{d+1}|\Lambda^{(d)}_{k,m}|,
  \end{equation}
  where the sum is over $d \in [k-1]$ satisfying \eqref{eq:freimans-lem-conseq}.

  Let $t=|\Sigma_d|=(2s+1)^d$ and fix $k^{-1/2}<\gamma<1/5$ (later in the proof we will take $\gamma$ to be a function of $k$ slowly tending to $0$).
  In order to apply \Cref{thm:containers}, we verify that all the hypotheses in \eqref{eq:conditions-for-containers} hold.
  First, we claim that
  \begin{equation*}
    \sigma^2 \leq \log t \leq \sqrt{m}.
  \end{equation*}
  Note that, by our bound on $s$ in \eqref{eq:fullrank-freimans-thm-size-of-box}, we have
  \begin{equation}\label{eq:log-s}
    \log (2s+1) = 2\sigma\log k+C_0\sigma^3(\log\sigma)^3+O(1) \leq C_1 \sigma^{3}(\log k)^{3},
  \end{equation}
  for a sufficiently large constant $C_1>0$.
  Since $d \leq 2\sigma \leq 2k^\alpha$, \(\alpha < 1 / 8\) and $k$ is sufficiently large, we obtain from \eqref{eq:log-s} that
  \begin{equation*}
    \log t = d \log (2s+1) \leq 2C_1\sigma^{4}( \log k )^{3} \leq 2C_1 k^{4\alpha}( \log k )^{3} \leq \sqrt{k} \leq \sqrt{m}.
  \end{equation*}
  Moreover, the lower bound $\sigma^2 \leq \log t$ follows from the equality in \eqref{eq:log-s} with $d \geq 1$.
  Second, we confirm that
  \[
    k \geq 80(d+1)^2/ \gamma.
  \]
  In fact, since $\gamma >k^{-1/2}$, it is enough to check that
  \[k \geq 80(d+1)^2 k^{1/2}\]
  or
  \begin{equation}\label{eq:k-vs-d-gamma}
    k^{1/2} \geq 80(d+1)^2.
  \end{equation}
  It follows from \Cref{prop:dim-doubling} that $d \leq 2 \sigma \leq 2 k^{\alpha}$, so \eqref{eq:k-vs-d-gamma} holds for sufficiently large $k$ as long as $\alpha < 1/4$.

  Now, apply \Cref{thm:containers} to obtain a small collection $\cX$ as in its statement for some $M=M(\gamma)$ and $C>0$.
  We claim that $\cX$ has size at most $2^{ M k^{3/4}}$; more precisely, recalling that \(d \leq \sigma \leq k^{\alpha}\) and $\log t \leq 2C_1\sigma^4 (\log k)^3$, we have
  \begin{equation}\label{eq:container-family-small}
    |\cX| \leq \exp\bigg(Md^C\sqrt{\sigma^3 k ( \log t )^{3}}\bigg) \leq \exp\bigg(M k^{1/2} \sigma^{C+8}(\log k)^{9/2} \bigg) \leq 2^{M k^{3/4}},
  \end{equation}
  where the last inequality follows from $k$ being sufficiently large and $\sigma \leq k^{\alpha}$ if we take $\alpha < \big(4(C+8)\big)^{-1}$.
  Moreover, for each \(B  \in \Lambda^{(d)}_{k,m}\) there exists \(X \in \cX\) so that \(B \subset X\) and
  \begin{equation*}
    |X| \leq \max_{r \in [d]}\, (1 + 5\gamma)\frac{m - (d - r)k}{r + 1}.
  \end{equation*}

  We bound the number of sets $B \in \Lambda^{(d)}_{k,m}$ from above by summing over all \(X \in \cX\) and counting the number of \(B \subset X\) with $|B|=k$.
  Note that we have at most
  \[\binom{|X|}{k} \leq \max_{r \in [d]}\binom{(1 + 5\gamma)\frac{m - (d - r)k}{r + 1}}{k}\]
  possibilities for the set $A \subset X$ with $|A|=k$.
  So, to finish the proof, all we need are simple calculations to show essentially that the above term is maximised for $r=1$ -- we do these in Appendix \ref{sec:appendix-calc}.
  It follows from \Cref{lem:binom-main} that
  \begin{equation}\label{eq:binomial-container-bound}
    \binom{|X|}{k} \leq \max_{r \in [d]}\binom{(1 + 5\gamma)\frac{m - (d - r)k}{r + 1}}{k} \leq 2^{30\gamma \log(1/ \gamma) k}\binom{\lambda_{k, m}(d)}{k - d + 1}
  \end{equation}
  possibilities for the set $B \subset X$ with $|B|=k$.
  Therefore, combining \eqref{eq:container-family-small} and \eqref{eq:binomial-container-bound}, we obtain
  \[|\Lambda^{(d)}_{k,m}| \leq \sum_{X \in \cX} \binom{|X|}{k} \leq 2^{Mk^{3/4}}2^{30\gamma \log (1/\gamma) k}\binom{\lambda_{k, m}(d)}{k - d + 1}.\]
  Replacing this in \eqref{eq:first-bound-lambda} and taking $\gamma \to 0$ as $k \to \infty$ in a way that ensures that \[M=M(\gamma) = o(k^{1/4})\] completes the proof.
\end{proof}

\section{Containers for sumsets}\label{sec:containers}

In this section, we prove that \Cref{thm:containers} follows from \Cref{thm:supsat}.
For this, we use the following robustification \namecref{lem:robustify}, which follows from the same proof in \citep[Proposition~8.3]{campos2024independence}.

\begin{lem}\label{lem:robustify}
  Let \(d \in {\mathbb{N}}\) and \(X \subset {\mathbb{Z}}^{d}\) be a finite set with full rank.
  For any \(0<\delta < 1/d\), there is \(X' \subset X\), a \(\delta\)-robust subset, such that
  \begin{equation*}
    |X'| \geq (1 - \delta d)|X|.
  \end{equation*}
\end{lem}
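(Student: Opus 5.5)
The plan is a greedy \emph{rank-peeling} argument. I read ``$X'$ is a $\delta$-robust subset'' as: $X'$ has $\delta$-robust rank $\rank(X')$. That is, every $X'' \subseteq X'$ with $|X''| \ge (1-\delta)|X'|$ still satisfies $\rank(X'') = \rank(X')$.

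\textbf{Construction.} Set $X_0 = X$, so $\rank(X_0) = d$. Given $X_i$, stop if $X_i$ is $\delta$-robust in the above sense. Otherwise, by definition there is $X_{i+1} \subseteq X_i$ with
\[
  |X_{i+1}| \ge (1-\delta)|X_i| \qquad \text{and} \qquad \rank(X_{i+1}) \le \rank(X_i) - 1.
\]
Since rank is monotone under inclusion in $\ZZ^d$, each step lowers the rank by at least one.

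\textbf{Termination and size bound.} Rank is at least $0$ for nonempty sets, so the process runs for at most $d$ steps. It therefore stops at some $X' = X_j$ with $j \le d$, and $X'$ is $\delta$-robust by the stopping rule. Iterating the size bound gives
\[
  |X'| \ge (1-\delta)^j |X| \ge (1-\delta)^d |X| \ge (1-\delta d)|X|,
\]
where the last step is Bernoulli's inequality, valid because $0 < \delta < 1/d$.

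\textbf{Main point of care.} The only thing to check is that every $X_i$ is nonempty, so that rank is well defined and the recursion makes sense. This holds because $|X_i| \ge (1-\delta d)|X| > 0$ when $\delta < 1/d$. A set of rank $0$ is a single point, and it is trivially robust, since its only subsets of size at least $(1-\delta)\cdot 1 > 0$ are itself. So if the process reaches rank $0$ it stops there, and no further case needs treatment.
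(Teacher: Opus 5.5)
Your proof is correct and is essentially the argument the paper has in mind. The paper does not reprove the lemma; it defers to the greedy proof of Proposition~8.3 in the authors' earlier work. That proof, like yours, repeatedly passes to a subset containing at least a $(1-\delta)$-fraction of the current set with strictly smaller rank, notes this can happen at most $d$ times, and concludes with $(1-\delta)^d \ge 1-\delta d$.
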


Note that, after applying \Cref{lem:robustify}, we can only guarantee that the robust dimension $r$ of $X'$ satisfies $1 \le r \le d = \rank(X)$.
We also need the following version of \citeauthor{campos2020number}' container theorem for sumsets.

\begin{thm}[{\cite[Theorem 4.2]{campos2020number}}]\label{thm:containers-sumset}
  Let $d, t \in \NN$ and $\containersHost \subset \ZZ^d$ with $|\containersHost| = t$.
  Further let $0 < \eps < 1/4$ and $m \in \NN$ satisfy $m \geq (\log t)^2$.
  There exists a family \(\calF \subset 2^\containersHost \times 2^{\containersHost + \containersHost}\) of size at most
  \begin{equation*}
    |\calF| \le \exp\big(2^{16} \eps^{-2} \sqrt{m} ( \log t )^{3 / 2} \big)
  \end{equation*}
  such that:
  \begin{enumerate}[(i)]
    \item For every $B \subset \containersHost$ with $|B + B| \le m$, there is a pair $(X,Y) \in \calF$ such that $B \subset X$ and $Y \subset B + B$.
      \label{item:containers-contain-sets-with-m-sumsets}

    \item For every $(X, Y) \in \calF$, we have that $|Y| \le m$ and moreover that either
      \begin{equation}\label{eq:small-container}
        |X| \le \frac{m}{\log t},
      \end{equation}
      or
      \begin{equation}\label{eq:missing-pairs}
        \big|\{ ( x_1, x_2 ) \in X^2 : x_1 + x_2 \notin Y \} \big| \le \varepsilon^2 |X|^2.
      \end{equation}
  \end{enumerate}
\end{thm}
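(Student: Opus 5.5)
The statement is \citep[Theorem~4.2]{campos2020number}. I would reprove it by applying the asymmetric container lemma of \citet{morris2024asymmetric} to the structure of ``forbidden configurations'' for sets with small sumset.

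\textbf{Encoding.} Encode a set $B \subset \containersHost$ with $|B+B| \le m$ by the pair $(B, Z)$ with $Z = B+B \subset \containersHost + \containersHost$. The forbidden configurations are the triples $(x_1,x_2,x_1+x_2)$ with $x_1,x_2 \in B$ but $x_1+x_2 \notin Z$. Every genuine pair $(B, B+B)$ avoids them. In the language of the asymmetric container lemma, we work on the vertex set $\containersHost \sqcup (\containersHost + \containersHost)$. Each configuration asks two vertices to be ``in'' the first coordinate and one vertex to be ``out of'' the second. The lemma outputs, for each admissible pair, a fingerprint $(S_1,S_2)$ with $S_1 \subset B$ and $S_2 \subset B+B$. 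The fingerprint determines a container pair $(X,Y)$ with $B \subset X$ and $Y \subset B+B$, where $Y$ records the sums certified to lie in $B+B$. This gives \cref{item:containers-contain-sets-with-m-sumsets} and $|Y| \le m$.

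\textbf{The algorithm.} I would run the usual greedy algorithm, iterated over rounds. Maintain a candidate $X$ and a certified set $Y$, and repeatedly query the element of maximal degree in the current configuration hypergraph. Concretely, pick $x \in X$ maximising the number of partners $x' \in X$ with $x+x' \notin Y$.
\begin{enumerate}[(1)]
  \item If $x \in B$, put $x$ into the fingerprint. The sums $x+x'$ with $x' \in B$ then become forced into $B+B$, which can be recorded in $Y$.
  \item If $x \notin B$, delete $x$ from $X$ at no fingerprint cost.
\end{enumerate}
Since $|B+B| \le m$, each fingerprint element either certifies many new elements of $Y$ or removes many missing pairs. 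We stop as soon as $X$ is small, giving \eqref{eq:small-container}, or the number of missing pairs is at most $\eps^2|X|^2$, giving \eqref{eq:missing-pairs}.

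\textbf{Counting.} If the fingerprint has total size $L$, the number of containers is at most $t^{O(L)} = \exp(O(L \log t))$. To match the claimed size of $\calF$ one needs $L = O(\eps^{-2}\sqrt{m/\log t}\,)$. The balancing behind this is a dichotomy at each step, with threshold $\sqrt{m/\log t}$:
\begin{enumerate}[(1)]
  \item either a high-degree element exists, with degree at least roughly $\eps^2|X|^2/|X|$, and querying it makes a large multiplicative or additive gain against the budget $m$ for $Y$ or against $|X|$;
  \item or the missing pairs are already few, or $|X| \le m/\log t$.
\end{enumerate}
The condition $m \ge (\log t)^2$ ensures these thresholds are consistent. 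The factors $2^{16}$ and $\eps^{-2}$ absorb the codegree losses.

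\textbf{Main obstacle.} The hardest step is verifying the codegree and degree conditions of the asymmetric lemma, equivalently controlling the algorithm's fingerprint size. The difficulty is the asymmetry: elements of $Y$ are only certified, never guessed, so the progress measure must combine $\log |X|$ and $|Y|/m$. I would first try to run a potential-function argument on $|Y| + (m/\log t)\log |X|$ and show that each queried element increases it by $\Omega(\eps^2\sqrt{m\log t}\,)$, since the total increase of this potential over the whole run is $O(m)$.
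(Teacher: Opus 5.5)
\textbf{Verdict: there is a genuine gap.} Your top-level route matches the source: the paper only quotes this result from \citep[Theorem~4.2]{campos2020number}, where it is derived from the asymmetric container lemma of \citet{morris2024asymmetric}. The problem is the argument you put in place of actually applying that lemma.

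\textbf{Step (1) cannot be implemented.} The pair $(X,Y)$ must be reconstructible from the fingerprint, and the fingerprint does not reveal which partners $x'\in X$ lie in $B$. So the only sums you can certify are those in $S+S$, where $S\subset B$ is the current fingerprint. A fingerprint step therefore has three weaknesses.
\begin{itemize}
\item It does not shrink $X$.
\item It adds at most $|S|+1$ new elements to $Y$.
\item It need not remove many missing pairs: a newly certified sum $y$ removes only its representations $y=x_1+x_2$ with $x_1,x_2\in X$, and there may be $O(1)$ of these.
\end{itemize}
Since you only ever query elements of $X$, the hypothesis $|B+B|\le m$ is never used to force progress. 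Nothing in your argument stops the algorithm from fingerprinting a large part of $B$ before \eqref{eq:missing-pairs} holds. Hence the dichotomy ``each fingerprint element either certifies many new elements of $Y$ or removes many missing pairs'' is unjustified.

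\textbf{The potential estimate is false.} While $|X|>m/\log t$, a deletion moves $(m/\log t)\log|X|$ by only about $m/(|X|\log t)<1$. A fingerprint step raises $|Y|$ by at most $|S|+1$. In general neither is $\Omega(\eps^{2}\sqrt{m\log t})$.

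\textbf{The bookkeeping is off.} The bound $\exp(O(L\log t))\le\exp\big(2^{16}\eps^{-2}\sqrt m(\log t)^{3/2}\big)$ only needs $L=O(\eps^{-2}\sqrt{m\log t})$. Your target $L=O(\eps^{-2}\sqrt{m/\log t})$ is stronger, and it is out of reach for your scheme. To see this, take $B$ of size $m/4$ inside an arithmetic progression of length $m/2$ (in a host set containing it), and suppose $\log t>4$.
\begin{itemize}
\item Then $|X|\ge|B|>m/\log t$, so \eqref{eq:missing-pairs} must hold.
\item Each $y\in Y$ accounts for at most $|X|$ pairs of $X^2$, so this forces $|Y|\ge(1-\eps^2)|X|>m/5$.
\item But $Y\subseteq S+S$ gives $|Y|\le\binom{L+1}{2}$, so $L\gtrsim\sqrt m$, whereas your target is $o(\sqrt m)$ for fixed $\eps$ as $t\to\infty$.
\end{itemize}

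\textbf{The missing idea.} You need a mechanism that turns the knowledge $S\subset B$, together with $|B+B|\le m$, into progress. This means querying sums as well as elements. Take a sum $y$ lying in many translates $s+X$ with $s\in S$.
\begin{itemize}
\item If $y\notin B+B$, every $y-s$ can be deleted from $X$ at once.
\item If $y\in B+B$, then $y$ can be certified.
\end{itemize}
Showing that such two-sided queries make enough progress is what the degree and codegree hypotheses of the asymmetric container lemma guarantee, applied to your hypergraph of configurations $(x_1,x_2,x_1+x_2)$. You would need to verify them with parameters giving fingerprints of total size $O(\eps^{-2}\sqrt{m\log t})$. That is exactly the step you defer to your ``main obstacle'' paragraph, so as written the proposal is a plan rather than a proof.
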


The final \namecref{lem:size-of-Y} that we need gives bounds on the size of $Y \cap (X' + X')$ as a consequence of \Cref{thm:supsat}.

\begin{lem}\label{lem:size-of-Y}
  Let $k, d, m \in \N$, $0 < \delta \leq 1 / (2d)$ and $0 < \gamma < 1/5$ satisfy
  \begin{equation*}
    m \ge k \geq 80(d+1)^2/\gamma.
  \end{equation*}
  For every pair of finite sets $X,Y \subset \Z^d$ satisfying $|X|,|Y| \leq m$ such that there exists $B \in \Lambda_{k,m}^{(d)}$ satisfying $B \subset X$ and $Y \subset B + B$, the following holds.
  There exist $c=c(\gamma)>0$ and an absolute constant $C>0$ such that if $X' \subset X$ is a $\delta$-robust set satisfying $r=\rank(X')$ and
  \begin{equation}\label{eq:few-missing-pairs-lem}
    \big|\big\{ (x_1, x_2) \in (X')^2 : x_1 + x_2 \notin Y' \big\}\big| \le c \delta r^{-C}|X'|^{2},
  \end{equation}
  where $Y'=Y \cap (X'+X')$, then
  \begin{equation}\label{eq:size-of-Y-lem}
    (1- \gamma)(r+1) |X'| \leq |Y'| \leq m - \big( 1 - \delta d \sigma \big)\big(d - r\big)k.
  \end{equation}
\end{lem}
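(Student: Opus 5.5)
The lower bound in \eqref{eq:size-of-Y-lem} will come straight from \Cref{thm:supsat}, applied to $X'$ and $Y' = Y \cap (X'+X') \subset X'+X'$. The upper bound will come from a counting argument: $B+B$ contains $Y'$, and it also contains many sums lying outside $X'+X'$, generated by the elements of $B$ that are not in the low-rank set $X'$.

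\emph{Lower bound.} First we verify the hypotheses of \Cref{thm:supsat}. We need $|X'| \ge 40(r+1)^2/\gamma$. Since $B$ is full rank and $B \subset X$, the set $X$ has full rank, and $X'$ is $\delta$-robust with $|X'| \ge (1-\delta d)|X| \ge |X|/2$. Using $|X| \ge |B| = k \ge 80(d+1)^2/\gamma$ and $r \le d$, we get $|X'| \ge 40(d+1)^2/\gamma \ge 40(r+1)^2/\gamma$. Here I am assuming $X'$ is the robust subset produced by \Cref{lem:robustify}; otherwise $|X'| \ge |X|/2$ should be added as a hypothesis. We also need $\delta < \gamma$; this can be arranged in the constants, or $\delta$ can be replaced by $\min(\delta,\gamma/2)$, since robustness is monotone in $\delta$. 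The hypothesis \eqref{eq:few-missing-pairs-lem} is then exactly the missing-pairs hypothesis of \Cref{thm:supsat}, which yields $|Y'| \ge (1-\gamma)(r+1)|X'|$.

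\emph{Upper bound.} Let $V$ be the affine hull of $X'$, which has dimension $r$. Write $B_0 = B \cap V$ and $B_1 = B\setminus V$. Since $Y' \subset (B+B)\cap(X'+X') \subset (B+B)\cap 2V$, we have
\[
|B+B| \ge |Y'| + |(B+B)\setminus 2V|,
\]
so it suffices to show $|(B+B)\setminus 2V| \ge (1-\delta d\sigma)(d-r)k$. Then $|Y'| \le m - |(B+B)\setminus 2V|$ gives the claim.

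First, $B_0$ is large. Because $X'$ is $\delta$-robust of rank $r$ and $X\setminus X'$ has size at most $\delta d|X| \le \delta d m = \delta d\sigma k$, we get $|B_1| \le |X\setminus V| \le \delta d\sigma k$. (Points of $X\setminus X'$ may lie in $V$, which only helps.) So $|B_0| \ge (1-\delta d\sigma)k$.

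Next, since $B$ has full rank, we can pick $b_1,\dots,b_{d-r}\in B_1$ such that the affine hulls $V_j = \mathrm{aff}(V\cup\{b_1,\dots,b_j\})$ strictly increase in dimension. Consider the sets $B_0 + b_j$. Each lies in $V + b_j$. The affine hyperplanes $V+b_j$ are pairwise distinct translates of $V$, and each is distinct from $2V$: $b_j - b_i \notin V - V$ by the independence of the chosen directions, and $V + b_j \ne 2V$ because $b_j \notin V$. Hence the sets $B_0 + b_j$ are pairwise disjoint and disjoint from $2V$, giving
\[
|(B+B)\setminus 2V| \ge (d-r)|B_0| \ge (1-\delta d\sigma)(d-r)k.
\]

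\emph{Main obstacle.} The delicate step is the disjointness claim. I need $b_i - b_j \notin \vec V$ and $b_j \notin \vec V + V$, that is, $2V - b_j \neq V$; the latter holds because translating by $b_j\notin V$ leaves $V$. Both should follow from choosing the $b_j$ with affinely independent directions modulo $\vec V$, but this must be checked carefully. A second delicate point is the size bound for $X'$ needed in \Cref{thm:supsat}.
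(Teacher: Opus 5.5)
Your proposal is correct and follows essentially the paper's proof. The lower bound comes from \Cref{thm:supsat}, using $|X'|\ge(1-\delta d)|X|\ge|X|/2$; this is the same implicit hypothesis from \Cref{lem:robustify} that the paper also relies on. The upper bound chooses $d-r$ points of $B$ that successively raise the rank and notes that their translates of the large part of $B$ are pairwise disjoint and avoid $Y'$. You use $B\cap\mathrm{aff}(X')$ and $2V$ where the paper uses $B\cap X'$ and $X'+X'$, which is harmless. Your remark that \Cref{thm:supsat} needs $\delta<\gamma$, which the paper does not check, is fair and is fixed by shrinking $c(\gamma)$. One small slip: the translates $V+b_j$ are $r$-dimensional affine subspaces, not hyperplanes.
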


We postpone the proof of \Cref{lem:size-of-Y} to the end of this section.
The crucial consequence of this \namecref{lem:size-of-Y} is that it allows us to upper bound the size of $X'$, which in turn yields an upper bound on the size of $X$.
We recall the statement of \Cref{thm:containers} before proving it for convenience.

\containersX*

Next, we prove \Cref{thm:containers} assuming that \Cref{lem:size-of-Y} holds.
The proof begins by applying \Cref{thm:containers-sumset} to the host set $\containersHost \subset \mathbb{Z}^d$.
This produces a family of container pairs $(X, Y)$ of the required size such that every set $B$ of interest is contained in some $X$, while $Y$ is a subset of the sumset $B+B$.
For each pair $(X, Y)$ in this family, the theorem guarantees that either $X$ is very small (which means it cannot even contain our set $B$), or $(X, Y)$ satisfies a ``missing pairs'' property, \eqref{eq:missing-pairs}.

To obtain \eqref{eq:containers-are-small}, the precise bound on $|X|$ that we require, we start from \eqref{eq:missing-pairs}.
We then extract a large subset $X' \subset X$ of $\delta$-robust rank $r \in [d]$ and show that \cref{eq:missing-pairs} implies that the pair $(X', Y')$ (where $Y' = Y \cap (X' + X')$) satisfies \eqref{eq:few-missing-pairs-lem}.
By applying \Cref{lem:size-of-Y} to this pair, we obtain the fundamental inequality
\[(1- \gamma)(r+1) |X'| \leq |Y'| \leq m - ( 1 - \delta d \sigma )(d - r)k.\]
This inequality effectively bounds the size of the robust $X'$, adjusting for the dimensions that were ``lost'' in the robustification process.
Since $X$ is not significantly larger than its robust subset $X'$, choosing $\delta$ appropriately small allows us to conclude that $|X| \le (1 + 5\gamma) \frac{m - (d - r)k}{r+1}$ for some $r \in [d]$, which completes the proof.

\begin{proof}[Proof of \Cref{thm:containers} assuming \Cref{lem:size-of-Y}]
  Fix $0<\gamma < 1/8$.
  In order to apply \Cref{thm:containers-sumset}, note that, by hypothesis, $m \geq (\log t)^2$, where $t=|\Sigma|$.
  Let $c=c(\gamma)$ and $C>0$ be the constants given by \Cref{thm:supsat}, so that we can apply \Cref{thm:containers-sumset} with \(\containersHost\) and
  \[\varepsilon = \sqrt{\frac{c \gamma}{2\sigma d^{C+2}}}<\frac{1}{4},\]
  which follows from $\gamma<1/8$, $c<1$ and $\sigma,d \geq 1$.
  This application results in a container family \(\cF \subset 2^{\containersHost} \times 2^{\containersHost+\containersHost}\) satisfying
  \begin{equation}\label{eq:size-of-container-family}
    |\cF| \leq \exp\bigg(\frac{2^{16}}{\varepsilon^{2}}\sqrt{m( \log t )^{3}}\bigg) \leq \exp\bigg(M d^{C+2}\sqrt{\sigma^3 k (\log t)^3} \bigg),
  \end{equation}
  by our choice of \(\varepsilon\), where $M=M(\gamma) = 2^{17}/(c(\gamma) \gamma)$ depends only on $\gamma$.

  As $\cF$ is the result of an application of \Cref{thm:containers-sumset}, we also have that, for each \(B  \subset \containersHost\) with $|B|=k$ and $|B+B|\leq m$, there is some pair \((X,Y) \in \cF\) so that we have \(B \subset X\), \(Y \subset B + B\) and \((X,Y)\) satisfies either \eqref{eq:small-container} or \eqref{eq:missing-pairs}.
  Without loss of generality, we can assume that for every \((X,Y) \in \cF\), it is a container for some $B$, that is, there exists a $B \in \Lambda_{k,m}^{(d)}$ such that $B \subset X$ and $Y \subset B + B$; otherwise we can remove it from \(\cF\) without affecting the properties of the container family.
  Let $\containers$ be the collection of sets $X$ whose corresponding pair $(X,Y) \in \cF$ satisfies $|X| \geq k$ and that $X$ is full rank.
  It readily follows from \eqref{eq:size-of-container-family} that
  \begin{equation*}
    |\containers| \leq |\cF| \leq \exp\bigg(M d^{C+2}\sqrt{\sigma^3 k (\log t)^3} \bigg).
  \end{equation*}
  We now show that if $X \in \containers$, then it comes from a pair $(X,Y) \in \cF$ that satisfies \eqref{eq:missing-pairs}, that is, if $(X,Y) \in \cF$ satisfies \eqref{eq:small-container}, then $|X|<k$.
  In fact, we have
  \[|X| \leq \frac{m}{\log t} \leq \frac{\sigma k}{\sigma^{2}} \leq \frac{k}{\sigma} < k,\]
  where the second inequality follows from the assumption that $\log t \geq \sigma^2$.
  We can therefore assume that \eqref{eq:missing-pairs} holds for every $(X,Y) \in \cF$ with $|X|\geq k$.
  Thus, it remains to show that \eqref{eq:missing-pairs} implies \eqref{eq:containers-are-small}, namely
  \begin{equation}\label{eq:containers-are-small-rst}
    |X| \leq \max_{r \in [d]} (1 + 5\gamma) \frac{m - (d - r)k}{r + 1},
  \end{equation}
  for every pair \((X, Y) \in \cF\) satisfying \eqref{eq:missing-pairs}.

  To establish that \eqref{eq:containers-are-small-rst} holds, fix $(X, Y) \in \cF$ and let \(X' \subset X\) be a \(\delta\)-robust set given by \Cref{lem:robustify}, where we choose
  \[\delta = \frac{\gamma}{\sigma d^{2}} \leq \frac{1}{2d}.\]
  This implies, in particular, that
  \begin{equation}\label{eq:sizeOfXPrimeFromRobustification}
    |X'| \geq (1 - \delta d)|X| \geq (1-\gamma)|X|.
  \end{equation}
  Now, we would like to apply \Cref{lem:size-of-Y}, and for that we need to verify that $|X| \leq m$ holds and that the pair $(X',Y')$ satisfies \eqref{eq:few-missing-pairs-lem}, where $Y'=Y \cap(X'+X')$.
  We establish that \eqref{eq:few-missing-pairs-lem} holds as a consequence of $(X,Y)$ satisfying \eqref{eq:missing-pairs} and $|X'| \geq (1- \delta d) |X|$.
  In detail:
  \begin{equation*}
    \big| \big\{( x_{1},x_{2}) \in (X')^2~|~x_{1} + x_{2} \notin Y' \big\} \big|
    \leq \frac{c\gamma}{2\sigma d^{C+2}(1 - \delta d)^{2}}|X'|^{2}
    \leq c\delta d^{-C}|X'|^{2}
    \leq c\delta r^{-C}|X'|^{2},
  \end{equation*}
  where in the second to last inequality we used that \((1 - \delta d)^{2} \geq 1/2\), which holds since \(\gamma < 1/8\), and in the last one we relied on $\rank(X') = r \le d$.

  Moreover, we also check that, applying \Cref{thm:supsat} to \(Y\), we obtain
  \begin{equation}\label{eq:whatImplies}
    \big|\big\{( x_{1},x_{2}) \in X^{2}~|~x_{1} + x_{2} \notin Y \big\}\big| \leq \frac{c \gamma}{2\sigma d^{C}}|X|^{2} \leq c\delta d^{-C+2}|X|^{2} \leq c\delta |X|^{2},
  \end{equation}
  observing that \(X\) has \(\delta\)-robust dimension \(1\) and that $|X|\geq k \geq 40 \cdot 2^2/ \gamma$.
  From \eqref{eq:whatImplies}, we conclude that
  \[(1 - \gamma)2|X| \leq |Y| \leq \sigma k\]
  and therefore
  \[|X| \leq \sigma k = m.\]
  Finally, there is a $B \in \Lambda_{k,m}^{(d)}$ with $B \subset X$ and $Y \subset B + B$, since we have discarded container-pairs $(X,Y) \in \cF$ that failed this requirement.

  Having verified that our setting meet the requirements of \Cref{lem:size-of-Y}, we can apply it and conclude that
  \begin{equation}\label{eq:size-of-Y}
    (1 - \gamma)(r + 1)|X'| \leq |Y'| \leq m - \big( 1 - \delta d \sigma \big)(d - r)k.
  \end{equation}
  Rearranging the lower bound, we have
  \begin{equation}\label{eq:lb-on-Y}
    |X'| \leq \frac{|Y'|}{(1 - \gamma)(r+1)}.
  \end{equation}

  In order to prove \eqref{eq:containers-are-small-rst}, let us first show that
  \begin{equation}\label{eq:size-of-X-prime}
    |X'| \leq (1 + 3\gamma)\frac{m - (d - r)k}{r + 1},
  \end{equation}
  Putting \eqref{eq:lb-on-Y} together with the upper bound in \eqref{eq:size-of-Y} and noting that $\sigma d \delta = \gamma/d$ follows from the definition of $\delta$, we obtain
  \begin{equation}\label{eq:size-of-X-prime-1}
    |X'| \leq \frac{1}{(1 - \gamma)(r+1)}\Big(m - \big( 1 - \gamma/d \big)(d - r)k\Big).
  \end{equation}
  We claim that
  \begin{equation}\label{eq:ineq-r}
    m-\big(1-\gamma/d\big)(d-r)k \leq (1+\gamma)(m-(d-r)k),
  \end{equation}
  which, by subtracting $m-(d-r)k$ from both sides and recalling that $m=\sigma k$, is equivalent to
  \[\gamma k(d-r)/d \leq \gamma (\sigma k-(d-r)k)\]
  and
  \begin{equation}\label{eq:inequalityToProve}
  d-r \leq d(\sigma-(d-r)).
  \end{equation}
  In fact, by \Cref{prop:dim-doubling}, $r\geq 1$ and $\sigma \geq d$, we have
  \[\sigma -(d-r) \geq 1,\]
  which establishes \eqref{eq:inequalityToProve}, and therefore we obtain \eqref{eq:ineq-r}.
  Thus, from \eqref{eq:size-of-X-prime-1} and \eqref{eq:ineq-r} we have
  \begin{equation*}
    |X'| \leq \frac{1+\gamma}{1 - \gamma} \, \frac{m-(d-r)k}{r+1}< (1 + 3\gamma)\frac{m - (d - r)k}{r + 1},
  \end{equation*}
  which is exactly \eqref{eq:size-of-X-prime}, where the last inequality follows since $\gamma<1/3$.

  Now, combining \eqref{eq:size-of-X-prime} with \eqref{eq:sizeOfXPrimeFromRobustification}, we obtain
  \begin{equation}\label{eq:equalsTheMaximum}
    |X| \leq \frac{1+3\gamma}{1-\gamma}\, \frac{m - (d - r)k}{r + 1} < (1 + 5\gamma)\frac{m - (d - r)k}{r + 1},
  \end{equation}
  since \(\delta = \gamma/(\sigma d^{2})\) and $\gamma<1/4$.
  As \eqref{eq:equalsTheMaximum} is exactly \eqref{eq:containers-are-small-rst} after taking the maximum over $r \in [d]$, this completes the proof.
\end{proof}

We finish this section with the proof of \Cref{lem:size-of-Y}.
The lower bound on $|Y'|$ in \eqref{eq:size-of-Y-lem} is a direct consequence of \Cref{thm:supsat}, while the upper bound is a geometric argument which roughly says that for each decrement in the dimension incurred by the robustification process, we obtain a factor of $(1-\delta d \sigma)k$ in the final bound.

\begin{proof}[Proof of \Cref{lem:size-of-Y}]
  First, we will establish the lower bound.
  We will apply \Cref{thm:supsat} to the pair $(X',Y')$, and for that, it suffices to check that $X'$ is sufficiently large.
  In fact, since $|X| \geq k \geq 80(d+1)^2/\gamma$, $|X'| \geq (1-\delta d)|X|$ with $1-\delta d \geq 1/2$, and $r \leq d$, we obtain
  \[|X'| \geq \frac{|X|}{2} \geq \frac{40(r + 1)^{2}}{\gamma}.\]
  Thus, by \Cref{thm:supsat}, we have
  \begin{equation*}
    |Y'| \geq (1 - \gamma)(r + 1)|X'|,
  \end{equation*}
  which is exactly the lower bound in \eqref{eq:size-of-Y-lem}.

  We now show that the upper bound in \eqref{eq:size-of-Y-lem} holds.
  First note that it follows from $|X| \leq m$, $m=\sigma k$ and $|X'| \geq (1- \delta d)|X|$ that
  \begin{equation}\label{eq:something}
    |X\setminus X'| \leq \delta d|X| \leq \delta d m = \delta d\sigma k.
  \end{equation}
  Combining \eqref{eq:something} with \(B \subset X\) implies that
  \begin{equation}\label{eq:ub-on-X-prime}
    |B \cap X'| \geq k - |X \setminus X'| \geq (1 - \delta d\sigma)k.
  \end{equation}

  Further observe that, since $B$ is full rank in $\Z^d$, for any set $U \subset \R^d$ such that $\rank(U) < d$, there exists $x \in B$ such that $\rank(U \cup \{x\})=\rank(U) +1$.
  Recalling that $\rank(X') = r\leq d$ and applying the preceding observation recursively, we conclude that there exists a set \[U=\{ x_{1},\ldots,x_{d - r} \} \subset B\setminus X'\] such that
  \[\rank\big(X' \cup \{ x_{1},\ldots,x_{i} \}\big) = r + i\]
  for each $i \in \{ 0,1,\ldots,d - r \}$.
  Since the addition of each $x_i$ spans a new dimension, we have
  \begin{equation}\label{eq:disjoint-translates}
    \big( x_i + B \cap X' \big) \cap (x_j + B \cap X') = \emptyset
  \end{equation}
  for each $i \neq j$, so that
  \begin{equation}\label{eq:size-of-U-plus-B}
    \big|U +  B \cap X'\big| = \sum_{i=1}^{d-r} |x_i+ B \cap X'| \geq \big(1- \delta d \sigma\big)(d-r)k,
  \end{equation}
  where we use \eqref{eq:ub-on-X-prime} in the last inequality.
  Moreover, similarly to \eqref{eq:disjoint-translates}, we have
  \[(x_i + B \cap X') \cap X' = \emptyset,\]
  and, hence\footnote{For sets $A$, $B$ and $C$, we use the notation $A + B \cap C$ to denote $A + (B \cap C)$.},
  \begin{equation*}
    \big( U + B\cap X' \big) \cap (X' + X') = \emptyset.
  \end{equation*}
  Now, recalling definition of $Y'=Y \cap (X'+X') \subset (X'+X')$, we conclude that
  \begin{equation}\label{eq:YPrimeAndUPlusBSectXPrimeAreDisjoint}
    Y' \cap (U + B \cap X') = \emptyset.
  \end{equation}
  Finally, since
  \[ Y'\subset Y\subset B+B, \qquad U+B\cap X'\subset B+B, \]
  and \eqref{eq:YPrimeAndUPlusBSectXPrimeAreDisjoint} holds, we have
  \begin{equation}\label{eq:anotherAboveInequality}
    |Y'|+|U+(B\cap X')|\leq |B+B|\leq m.
  \end{equation}
  Therefore, substituting \eqref{eq:size-of-U-plus-B} into \eqref{eq:anotherAboveInequality}, we obtain
  \[|Y'| \leq m - \big( 1 - \delta d \sigma \big)(d - r)k\]
  which proves the upper bound in \eqref{eq:size-of-Y-lem} and completes the proof.
\end{proof}

\section{Supersaturation}\label{sec:supsat}

In this section, we will turn our attention to the proof of \Cref{thm:supsat}.
First, we will show how to deduce \Cref{thm:supsat} from \Cref{thm:few-translates} and a result of \citet*{jing2023kemperman}, following the analogous steps in \citep{campos2024independence}.
We will then prove the main technical requirement of this section in \Cref{sec:weighted-freiman}, and finally prove \Cref{thm:few-translates} in \Cref{sec:freiman-few-translates}.

Before proving \Cref{thm:supsat}, we state the result of \citet*{jing2023kemperman} that we require.
Although it does not exactly match their statement, it is easy to see that it follows from it by projecting the set $X$ into $\RR^r$.

\begin{thm}[{\cite[Theorem~1.2]{jing2023kemperman}}]\label{thm:jing-mudgal}
  Given \(d,r \in {\mathbb{N}}\), there exists a constant \(M = M(r) > 0\)
  such that, for every set \(X \subset {\mathbb{R}}^{d}\) with
  \(\rank(X) \geq r\), there exist \(x_{1},\ldots,x_{M} \in X\)
  satisfying
  \begin{equation*}
    | {X + \{ x_{1},\ldots,x_{M} \}} | \geq (r + 1)|X| - {5(r + 1)}^{3}.
  \end{equation*}
\end{thm}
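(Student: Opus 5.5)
The plan is to reduce to the case of a full-rank set in $\RR^r$, where the statement is essentially \cite[Theorem~1.2]{jing2023kemperman}. That theorem gives, for every $Z \subset \RR^r$ with $\rank(Z)=r$, a set $T' \subset Z$ of at most $M(r)$ elements such that $|Z+T'| \ge (r+1)|Z| - 5(r+1)^3$. The reduction is a generic linear projection $\pi:\RR^d \to \RR^r$ with two properties.

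\textbf{Choosing $\pi$.} We need:
\begin{enumerate}[(i)]
  \item $\pi$ is injective on $X$;
  \item $\rank(\pi(X)) = r$.
\end{enumerate}
For (i), note that $\pi$ identifies $x \neq x'$ only if $x-x' \in \ker\pi$. For each of the finitely many nonzero differences $x-x'$ with $x,x' \in X$, the set of $r\times d$ matrices whose kernel contains $x-x'$ is a proper algebraic subvariety. So a generic $\pi$ avoids all of them.

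For (ii), since $\rank(X) \ge r$ we can fix $x_0,\dots,x_r \in X$ that are affinely independent, so $x_1-x_0,\dots,x_r-x_0$ are linearly independent. The set of $\pi$ for which the images $\pi(x_i-x_0)$ stay linearly independent is the nonvanishing locus of an $r\times r$ determinant. This polynomial in the entries of $\pi$ is not identically zero: take $\pi$ to be any map sending these $r$ vectors to a basis of $\RR^r$. Hence this set is open and dense, and a generic $\pi$ satisfies both (i) and (ii) simultaneously.

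\textbf{Pulling back.} Set $Z = \pi(X)$, so that $|Z| = |X|$ and $\rank(Z) = r$. Apply the $\RR^r$ result to $Z$ to get $T' = \{\pi(x_1),\dots,\pi(x_M)\}$ with $x_i \in X$ and $M = M(r)$; we may pad with repeated points to have exactly $M$ of them. Since $\pi$ is linear, $\pi(X + \{x_1,\dots,x_M\}) = Z + T'$. Projection cannot increase cardinality, so
\[
  |X+\{x_1,\dots,x_M\}| \ge |Z+T'| \ge (r+1)|X| - 5(r+1)^3 .
\]
The constant $M$ depends only on $r$, not on $d$, as required.

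\textbf{Main obstacle.} There is no real technical difficulty. The only points needing care are that genericity handles (i) and (ii) simultaneously, and that the cited result is stated for full-rank sets in $\RR^r$ (or for sets in $\ZZ^r$). In the latter case I would additionally choose $\pi$ with rational entries, which the density argument permits. I would then scale $Z$ by a common denominator to land in $\ZZ^r$, which preserves both cardinalities and sumset sizes.
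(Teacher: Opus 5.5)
Your proposal is correct and follows the paper's route: the paper justifies this statement in one line, by projecting $X$ into $\RR^r$, and your generic projection (injective on $X$, with $\rank(\pi(X)) = r$) together with the pull-back $|X+T| \ge |\pi(X)+\pi(T)|$ supplies exactly the omitted details. One caveat on your fallback for a $\ZZ^r$-version of the cited theorem: choosing $\pi$ with rational entries only makes $\pi(X)$ rational when $X \subset \QQ^d$, so that remark does not cover arbitrary real $X$, though it does cover the paper's only application, where $X \subset \ZZ^d$.
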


\begin{proof}[Proof of \Cref{thm:supsat}]
  Assume that \(|Y| \leq (1 - \gamma)(r + 1)|X|\).
  We iteratively construct a sequence of distinct elements \(x^{(1)},\ldots,x^{(t)} \in X\), where \(t = \big\lceil\delta|X|/4 \big\rceil\), with the following property: for each $i \in \{0,1, \ldots, t-1\}$, the sets \(X_{i} := X\setminus\{ x^{(1)},\ldots,x^{(i)} \}\) satisfy
  \begin{equation}\label{eq:xi-requirements}
    d_i=\rank(X_{i}) \geq r\quad\text{  and  }\quad|( X_{i} + x^{(i + 1)} )\backslash Y| \geq 4c(\gamma)r^{-C}|X|.
  \end{equation}
  The rank condition holds because, for all \(i \leq t\),
  \[|X_{i}| \geq |X| - i \geq |X| - (t-1) \geq \bigg( 1 - \frac{\delta}{4} \bigg)|X|,\]
  so we still have \(\rank(X_{i}) \geq r\), since \(X\) has
  \(\delta\)-robust dimension \(r\).

  To establish the second property of the sets \(X_{i}\) in \eqref{eq:xi-requirements}, we select each \(x^{(i + 1)}\) inductively.
  Suppose we have already selected distinct translates \(\{ x^{(1)},\ldots,x^{(i)} \}\) such that $X_{i}$ satisfies $d_i \geq r$.
  We consider two cases depending on the size of \(\gamma\).

  Suppose first that \(\gamma > 4C\log (2r) / r\).
  In this case, we apply \Cref{thm:few-translates} to \(X_{i}\), which is possible since $\rank(X_{i}) \geq r$ and
  \[|X_{i}| \geq (1 - \delta/4)|X| \geq \frac{20(r + 1)^{2}}{\gamma} \geq \frac{2r}{\gamma}.\]
  Hence, there is a set \(T_{i} \subset X_{i}\) such that
  \[|T_{i}| \leq Cr^{C}\quad\text{  and  }\quad|X_{i} + T_{i}| \geq \bigg( 1 - \frac{\gamma}{4} \bigg)(r + 1)|X_{i}|,\]
  where \(C>0\) is an absolute constant.

  Otherwise, we have \(\gamma \leq 4C\log (2r) / r\) or, equivalently, \(r \leq 8C\log(4 / \gamma) / \gamma\).
  In this case, as
  \[|X_i|\geq (1-\delta/4)\frac{40(r + 1)^{2}}{\gamma} \geq \frac{20(r + 1)^{2}}{ \gamma},\]
  we can apply \Cref{thm:jing-mudgal}, obtaining \(T_{i} = \{ x_{1},\ldots,x_{M} \} \subset X_{i}\) with
  \[|X_{i} + T_{i}| \geq (r+1)|X_i| - 5(r+1)^3 \geq \bigg( 1 - \frac{\gamma}{4} \bigg)(r + 1)|X_{i}|,\]
  for some constant \(M\).
  Importantly, since $r$ is bounded by a function of $\gamma$ in this case, we can take $M=M(\gamma)$ to be a constant depending only on $\gamma$ instead of $r$.
  Consequently, both cases yield a translate set \(T_{i}\) of size \(|T_{i}| \leq MCr^{C}\).

  Combining the bounds, since \(i < t = \big\lceil\delta |X|/4 \big\rceil < \gamma |X|/4 \), we obtain
  \[|X_{i} + T_{i}| \geq \bigg( 1 - \frac{\gamma}{4} \bigg)^{2}(r + 1)|X| \geq \bigg( 1 - \frac{\gamma}{2} \bigg)(r + 1)|X|.\]
  Using our initial assumption that \(|Y| \leq (1 - \gamma)(r + 1)|X|\), it follows that
  \[|( X_{i} + T_{i} )\setminus Y| \geq |X_{i} + T_{i}| - |Y| \geq \frac{\gamma(r + 1)|X|}{2}.\]
  By the pigeonhole principle, some \(x^{(i + 1)} \in T_{i}\) satisfies
  \[|( X_{i} + x^{(i + 1)} )\setminus Y| \geq \frac{\gamma(r + 1)|X|}{2|T_{i}|} \geq 4c(\gamma)r^{-C}|X|,\]
  where $c(\gamma) = \gamma/(8 C M)$.
  Repeating this process for each \(i > 0\) yields the sets \(X_{i}\) and \(\{ x^{(1)},\ldots,x^{(t)} \}\) satisfying \eqref{eq:xi-requirements}.

  Finally, now that we have the sets \(X_{i}\), note each \(X + x^{(i)}\) contributes with at least \(4cr^{-C}|X|\) ordered pairs whose sum are not in \(Y\).
  This gives us a total of
  \[4cr^{- C}|X|t \geq c\delta r^{- C}|X|^{2}\]
  such pairs, because \(t \geq \delta |X|/4\), completing the proof.
\end{proof}

\subsection{Weighted \Freiman's lemma}\label{sec:weighted-freiman}

We now proceed to prove the main technical piece required in the proof of \Cref{thm:few-translates}.
To do so, we adapt ideas from the weighted version of \Freiman's lemma in \citep{campos2024independence} and improve them (see \Cref{prop:phase3}), so we need a few definitions from that paper.

\begin{defi}\label{def:defis-weighted-freiman}
  Let $X\subseteq \RR^d$ be a finite set and $W \subset \RR^d$ a subspace, we define
  \begin{equation*}
    Z(X, W) = \Pi_{W^\perp}(X),
  \end{equation*}
  where $\Pi_U(V)$ denotes the orthogonal projection of $V$ onto $U$.
  We also partition $\R^d$ into equivalence classes in that projection, denoting these by
  \begin{equation*}
    [z]_W = \big\{x \in \R^d : \Pi_{W^\perp}(x) = z\big\},
  \end{equation*}
  and partition X into equivalence classes in the same way:
  \begin{equation*}
    \equivc{z}_{W, X} = [z]_W \cap X.
  \end{equation*}
\end{defi}

It will be convenient to omit the dependence of those definitions on $X$ and $W$ whenever these sets are clear from context.
In this case, the equivalent notation will be simply $Z, [z], \equivc{z}$.
Furthermore, we refer to $[z]$ as a ``fibre'', which we say is ``empty'' if $z \not \in Z$.
From now on, in this section, we will avoid using the notation $[n]$ for the set $\{1, \ldots, n\}$ to avoid confusion.
Below we state our novel version of the weighted version of \Freiman's lemma.

\begin{prop}\label{prop:phase3}
  Let \(d,r \in \N\) and let \(X \subset \R^{d}\) be a finite set.
  Let also \(W \subset \R^{d}\) be a subspace with dimension \(r'\) and \(Z = Z(X,W)\).
  If \(\rank(X) \geq r\), then there is \(T \subset X\)  such that
  \[|X + T| \geq (r - r' + 1)|X| - \binom{|Z| + 1}{2}\]
  and \(|T| \leq 3|Z|^{2}\)
\end{prop}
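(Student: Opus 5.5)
We prove \Cref{prop:phase3} by inducting on the fibres of $X$ over $W$. Write $Z=\{z_1,\dots,z_\ell\}$ with $\ell=|Z|$, and let $X_j=\equivc{z_j}$ be the fibres, so that $X=\bigsqcup_j X_j$ and each $X_j$ lies in the affine translate $z_j+W$. The projection $Z=\Pi_{W^\perp}(X)$ satisfies
\[
\rank(Z)\ge \rank(X)-\dim W\ge r-r' .
\]
Set $r''=\rank(Z)$; it suffices to prove the bound with $r''$ in place of $r-r'$. The plan is a weighted Freiman's lemma in the quotient space, with the fibre sizes $|X_j|$ as weights. For $z,z'\in Z$ and $x\in X_j$, the set $X_i+x$ lies in the fibre over $z_i+z_j$. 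Hence
\[
|X+T|=\sum_{w\in Z+Z}\bigl|(X+T)\cap[w]\bigr|,
\]
and for each $w$ we lower bound the contribution by $\max|X_i+x|\ge \max_i|X_i|$, taken over pairs $(i,j)$ with $z_i+z_j=w$ and some $x\in T\cap X_j$. So it is enough to put one point of each fibre into $T$, or a few points when we need more.

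The key combinatorial step mimics the standard proof of Freiman's lemma by ordering. Choose a generic linear functional on $W^\perp$ and order $z_1<\dots<z_\ell$, so that the extreme points are well defined. We proceed by induction on $\ell$. Remove the last point $z_\ell$; the remaining projection either keeps rank $r''$ or drops to rank $r''-1$.

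In the first case, induction gives $T'$ with $|X'+T'|\ge (r''+1)|X'|-\binom{\ell}{2}$, where $X'=X\setminus X_\ell$. We then need to gain at least $(r''+1)|X_\ell|-\ell$ new elements in fibres not met by $X'+X'$. Following Freiman's lemma, we use the fibres over $z_\ell+z_\ell$ and over $z_\ell+z_{i}$ for $z_i$ among $r''$ points chosen so that these sums are new. These are the vertices adjacent to $z_\ell$ in a triangulation of $\conv(Z)$, or $r''$ points spanning with $z_\ell$ a facet-free direction. Each such fibre contributes at least $\max(|X_\ell|,|X_i|)\ge|X_\ell|$.

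The loss term $\binom{\ell+1}{2}-\binom{\ell}{2}=\ell$ absorbs the defects that arise when we cannot guarantee that all $r''+1$ sums are new. This is the usual $-\binom{d+1}{2}$ bookkeeping: $(r''+1)|X_\ell|$ compared with counting sums, with the deficit bounded by $\ell$ since every new fibre contributes at least $1$.

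In the second case, where the rank drops, $z_\ell+Z'$ gives $\ell-1$ new fibres disjoint from $Z'+Z'$. We get $|X'+T'|\ge r''|X'|-\binom{\ell}{2}$ and add $\sum_{i<\ell}\max(|X_\ell|,|X_i|)+|X_\ell|$. This must be at least $(r''+1)|X_\ell|+|X'|-\ell$, which is false in general, so this is where the weights must be handled with care. Instead we should order the fibres by decreasing size as the primary criterion, or remove a fibre of minimum weight that is an extreme point. A greedy choice of a vertex of $\conv(Z)$ whose fibre is smallest among vertices makes the max-weights dominate.

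For each fibre used, we add to $T$ one point of $X_\ell$ and one point of each relevant $X_i$. That is $O(\ell)$ new points per step, giving $|T|\le 3\ell^2$ overall with room to spare; more precisely we use at most $3\ell$ points per step.

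The main obstacle is exactly this weighted step. Freiman's lemma counts each new sum as $1$, but here each new sum fibre is worth the larger of the two fibre weights. We must guarantee a total gain of $(r''+1)|X_\ell|$ plus the weight needed to upgrade the induction when the rank drops, all while the error stays at most $\binom{\ell+1}{2}$. I would first try the \cite{campos2024independence}-style argument: remove an extreme fibre of minimum weight, use the $r''+1$ edges from it in a triangulation to get new sums each worth at least $|X_\ell|$, and handle the rank-drop case by an extra inductive layer on $r''$ in which removing that fibre costs only $|X'|$ via the sums $z_\ell+z_i$.
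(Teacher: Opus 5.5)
There is a genuine gap, and it sits where you flag it yourself: the rank-drop case. Bounding the contribution of each fibre $[w]$, $w\in Z+Z$, by $\max(|X_i|,|X_j|)$ is intrinsically too weak. Neither a different choice of which vertex to remove nor an extra induction on $r''$ can repair it.

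Take $X=\{0,\dots,N-1\}\times\{0,1\}\subset\RR^2$ and $W=\RR\times\{0\}$. Then $r=2$, $r'=1$, $|Z|=2$, and the target is $4N-3$. Here $|Z+Z|=3$ and every fibre has weight $N$, so any bound of your form is at most $3N$. Indeed, $T=\{(a,0),(a,1)\}$ with $0\le a<N$ gives $|X+T|=3N<4N-3$ for $N\ge 4$.

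More generally, let $Z$ be a simplex with $\ell$ fibres of equal size $N$. The max-weight bound is then $\binom{\ell+1}{2}N$, against a target of $\ell^2N-\binom{\ell+1}{2}$. This is exactly the factor-$2$ loss in the weighted version of \Freiman's lemma in \citep{campos2024independence} that this proposition is designed to remove. Your ``remove an extreme fibre of minimum weight'' fix leaves a deficit of $r''|X_\ell|-\ell$, which is positive as soon as the fibres are large.

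The missing idea is a Cauchy--Davenport-type bound between fibres that uses only boundedly many translates. The paper gets it from Bollob\'as--Leader--Tiba, \Cref{cor:blt}. For each ordered pair $(i,j)$ with $|\equivc{z_i}|\ge|\equivc{z_j}|$, there is $T_{i,j}\subset\equivc{z_j}$ with $|T_{i,j}|\le 3$ and $|\equivc{z_i}+T_{i,j}|\ge|\equivc{z_i}|+|\equivc{z_j}|-1$. Setting $T=\bigcup_{i,j}T_{i,j}$ gives $|T|\le 3|Z|^2$; this is where the $3$ comes from, not ``$3\ell$ points per step''. By \Cref{prop:weighted-freiman-calc}, each $w\in Z+Z$ then contributes at least $|\equivc{x}|+|\equivc{y}|-1$ for every $(x,y)\in\cS_Z(w)$. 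That is, a fibre is worth the \emph{sum} of the two weights rather than their maximum.

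With this the induction closes. When removing the vertex $v$ drops the rank, the new fibres over $v+z$, $z\in Z$, contribute $\sum_{z\in Z}(|\equivc{z}|+|\equivc{v}|-1)=|X|+|Z||\equivc{v}|-|Z|$. Combined with the induction hypothesis and $\rank(Z)+1\le|Z|$, this yields $(\rank(Z)+1)|X|-\binom{|Z|+1}{2}$.

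Your rank-preserving case is fine as stated: via \Cref{prop:freiman-trick}, each of the $\rank(Z)+1$ new fibres is worth at least $|\equivc{v}|$.
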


Compared to \citep[Proposition 4.3]{campos2024independence}, the analogous result in our previous work, we improve the lower bound on $|X + T|$ by a (crucial) factor of 2, in the slightly easier setting where we do not need to avoid $[0]$, the zero fibre.
One of the tools that we use to improve the bound from 1/2 to 1 is the following theorem of \citet*{bollobas2022large}, which can be seen as a sharp one-dimensional version of the result that we require.

\begin{thm}[{\cite[Theorem 1]{bollobas2022large}}] \label{thm:blt}
  Let \(A\) and \(B\) be finite non-empty subsets of \(\ZZ\) with \(|A| \geq |B|\).
  Then there exist elements \(b_{1},b_{2},b_{3} \in B\) such that
  \[\big|A + \{ b_{1},b_{2},b_{3}\}\big| \geq |A| + |B| - 1.\]
\end{thm}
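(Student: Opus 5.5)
This is a cited result of \citet{bollobas2022large}, so we could simply invoke it. If we wanted a self-contained argument, we would proceed as follows; the third case below is not fully worked out.

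\emph{Normalisation.} After translating, assume $\min A=\min B=0$, and write $b=\max B$. If $b=0$ then $|B|=1$ and the claim is trivial. Dividing by $\gcd(B)$ (and treating the residue classes of $A$ modulo $\gcd(B)$ separately, which only helps), we may assume $\gcd(B)=1$.

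\emph{First two translates.} Take $b_1=0$ and $b_3=b$. Split $A$ into its residue classes $A_c$ modulo $b$. Within each class, $A_c$ is a union of maximal progressions with difference $b$, which we call \emph{blocks}. Each block $P$ satisfies $|P\cup(P+b)|=|P|+1$, and distinct blocks in the same class give disjoint unions. Hence
\[
|A+\{0,b\}|=|A|+\kappa,
\]
where $\kappa$ is the total number of blocks. If $\kappa\ge |B|-1$ we are done with any choice of $b_2$. This is the typical situation: for instance, when $A$ meets at least $|B|-1$ residue classes it holds automatically.

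\emph{Third translate when $\kappa\le |B|-2$.} In this case $A$ is concentrated in few long $b$-progressions. We must choose $b_2\in B\setminus\{0,b\}$ so that $A+b_2$ contributes at least $|B|-1-\kappa$ elements outside $A\cup(A+b)$.

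The plan is to look at the set $R$ of residue classes modulo $b$ met by $A$, so $|R|\le\kappa$.
\begin{itemize}
\item If some $b_2\in B$ has $R+b_2\not\subset R$, then $A+b_2$ puts a whole block, of length roughly $|A|/\kappa$, into a class disjoint from $A\cup(A+b)$. Since $|A|\ge|B|$, this block is long enough to supply the missing $|B|-1-\kappa$ elements. Some care is needed when $\kappa$ is close to $|B|$ and the blocks are short.
\item Otherwise $R+b_2\subset R$ for every $b_2\in B$. Then $R$ is invariant under the subgroup of $\ZZ/b$ generated by $B$, which is all of $\ZZ/b$ because $\gcd(B)=1$. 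So $A$ meets every class, giving $\kappa\ge b\ge|B|-1$, which contradicts our assumption.
\end{itemize}

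\emph{Main obstacle.} The hardest step is the counting in the first bullet: showing that a single shift $A+b_2$ gains enough new elements. In particular, we must control the overlap of $A+b_2$ with $A+\{0,b\}$ at the ends of the blocks. To handle it, I would choose $b_2$ to be the smallest element of $B$ whose residue leaves $R$, and compare the block endpoints directly. If that fails, I would fall back on induction on $b$: replace $B$ by $B\cap[0,b_2]$ and apply the same argument to the smaller diameter.
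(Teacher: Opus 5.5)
The paper gives no proof of this statement; it imports it from Bollob\'as--Leader--Tiba, so your opening remark that one can simply invoke it matches the paper's treatment. Your self-contained sketch, however, does not prove it. The gap is precisely the step you flag, and that step is the entire content of the theorem.

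The claim that an escaping residue class carries about $|A|/\kappa$ elements is false in general, because the classes can be very unbalanced. Your proposed fix (take $b_2$ to be the smallest element whose residue leaves $R$) can also fail. Take $b=100$, $A=\{c+100k : 0\le c\le 3,\ 0\le k\le 3-c\}$ (so $|A|=10$ and $\kappa=|R|=4$), and $B=\{0,1,50,51,52,53,100\}$. The smallest element of $B$ whose residue leaves $R=\{0,1,2,3\}$ is $b_2=1$. But $A+1$ adds only the single element $4$ to $A\cup(A+100)$, so $|A+\{0,1,100\}|=15<16=|A|+|B|-1$. The fallback does not help either. Replacing $B$ by $B\cap[0,b_2]$ discards the elements of $B$ above $b_2$, so the inductive conclusion cannot reach $|A|+|B|-1$; here it only sees $\{0,1\}$.

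The missing idea is to choose $b_2$ by averaging rather than by a fixed rule. With this, no blocks, no $\gcd$ normalisation and no invariance argument are needed. Put $r=|R|$. Each occupied class $A_c$ gains the new element $\max A_c+b$, so $|A\cup(A+b)|\ge|A|+r$. For $b_2\in B':=B\cap(0,b)$, the translates $A_c+b_2$ with $c+b_2\notin R$ lie in classes outside $R$, so they are disjoint from $A\cup(A+b)$. This gives
\[
  |A+\{0,b_2,b\}| \ge |A| + r + w(b_2), \qquad w(b_2) := \sum_{c\in R,\, c+b_2\notin R}|A_c|.
\]
For fixed $c\in R$, the $|B|-2$ elements of $B'$ are distinct non-zero residues modulo $b$, so at most $r-1$ of them send $c$ into $R$. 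Hence $\sum_{b_2\in B'}w(b_2)\ge |A|(|B|-1-r)$. If $r\le|B|-2$, some $b_2$ then satisfies
\[
  w(b_2)\ge |A|(|B|-1-r)/(|B|-2)\ge |B|-1-r,
\]
which completes the argument. The cases $r\ge|B|-1$ and $|B|\le 2$ are trivial.

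Incidentally, your justification of the $\gcd$ reduction would not work as stated. Treating the classes of $A$ modulo $\gcd(B)$ separately fails because one triple must serve all classes at once, and a single class need not satisfy $|A_c|\ge|B|$. With the averaging argument that reduction is unnecessary.
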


It will be convenient to apply \Cref{thm:blt} in the form below, which follows from the original version by applying a suitable \Freiman isomorphism to both sets.

\begin{cor}\label{cor:blt}
  Let \(A\) and \(B\) be finite non-empty subsets of \(\ZZ^d\) with \(|A| \geq |B|\).
  Then there exist elements \(b_{1},b_{2},b_{3} \in B\) such that
  \[\big|A + \{ b_{1},b_{2},b_{3}\}\big| \geq |A| + |B| - 1.\]
\end{cor}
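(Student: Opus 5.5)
The plan is to choose a \Freiman isomorphism that compresses all of $A\cup B$ onto the integers without creating any new coincidences among sums, and then invoke \Cref{thm:blt}.

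Let $S = A\cup B$, a finite subset of $\ZZ^d$. Choose a linear map $\phi:\ZZ^d\to\ZZ$ of the form $\phi(x)=\sum_i N^{i-1}x_i$, where $N$ is a sufficiently large integer. Concretely, $N$ should exceed $4\max_{x\in S}\norm{x}_\infty+1$.

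The key step is to check that $\phi$ is injective on $S+S$, that is, on all sums $a+b$ with $a,b\in S$. Coordinates of elements of $S+S$ have absolute value at most $2\max\norm{x}_\infty$. Hence the difference of two such sums has coordinates of absolute value less than $N/2$. A base-$N$ expansion with digits in $(-N/2,N/2)$ is zero only if every digit is zero. So $\phi(u)=\phi(v)$ with $u,v\in S+S$ forces $u=v$. Since $\phi$ is linear, this also gives injectivity of $\phi$ on $S$ itself: if $\phi(x)=\phi(y)$ then $\phi(2x)=\phi(2y)$, so $2x=2y$.

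Set $A'=\phi(A)$ and $B'=\phi(B)$. These are finite non-empty subsets of $\ZZ$, and $|A'|=|A|\ge|B|=|B'|$. Apply \Cref{thm:blt} to $A'$ and $B'$. This gives $b_1',b_2',b_3'\in B'$ with $|A'+\{b_1',b_2',b_3'\}|\ge |A'|+|B'|-1$. Let $b_j\in B$ be the unique preimage of $b_j'$.

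By linearity, $\phi(A+\{b_1,b_2,b_3\})=A'+\{b_1',b_2',b_3'\}$. The set $A+\{b_1,b_2,b_3\}$ lies in $S+S$, where $\phi$ is injective, so the two sets have the same size. Therefore
\[
|A+\{b_1,b_2,b_3\}| = |A'+\{b_1',b_2',b_3'\}| \ge |A|+|B|-1.
\]

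The only point requiring care is the main obstacle identified above: injectivity must hold on the whole of $(A\cup B)+(A\cup B)$, not merely on $A$ and $B$ separately. Taking $N$ large relative to the coordinates of both sets handles this. If some $b_j'$ coincide, the resulting $b_j$ coincide as well; this is harmless because \Cref{thm:blt} allows repeated elements.
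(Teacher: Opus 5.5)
Your proof is correct and is the same argument the paper gives in one line: map $A\cup B$ into $\ZZ$ by a Freiman isomorphism, here your explicit base-$N$ map, and apply \Cref{thm:blt}. Your version simply supplies the details the paper omits.

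There is one small arithmetic slip. With $N>4\max\norm{x}_\infty+1$, the coordinate differences are bounded only by $4\max\norm{x}_\infty<N-1$, not by $N/2$. This still suffices, because a vanishing sum $\sum_i c_iN^{i-1}$ with all $|c_i|<N$ forces $N\mid c_j$ for the smallest $j$ with $c_j\neq 0$, which is impossible. Alternatively, just take $N>8\max\norm{x}_\infty$.
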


The final ingredient needed in the proof of \Cref{prop:phase3} is \Cref{prop:weighted-freiman-calc} below.
It provides a lower bound for $|X+T|$, where $T \subset X$, in terms of the sumset of the fibres.
To state it precisely, given a set $Z \subset \R^d$, we define, for each \(z \in Z + Z\), the collection
\[\cS_Z(z) = \Big\{ (x,y) \in Z^{2}\ :\ \big| {\equivc{x}} \big| \geq \big| {\equivc{y}} \big|,\ x + y = z \Big\}.\]

\begin{prop}\label{prop:weighted-freiman-calc}
  Let \(d \in \N\) and let \(X \subset \R^{d}\) be a finite set.
  Let also \(W \subset \R^{d}\) be a subspace and $Z=Z(X,W)$.
  For all $T \subset X$, we have
  \begin{equation*}
    |X + T| \geq \sum_{z \in Z + Z}\max_{(x,y) \in \cS_Z(z)}\big| {\equivc{x} + (\equivc{y} \cap T)} \big|.
  \end{equation*}
\end{prop}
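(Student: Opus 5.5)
The plan is to partition $X+T$ according to fibres of the projection $\Pi_{W^\perp}$ and then pick one representative pair for each fibre of the sumset. Since $\Pi_{W^\perp}$ is linear, for $x' \in \equivc{x}$ and $t \in \equivc{y}\cap T$ we have $\Pi_{W^\perp}(x'+t) = x+y$. Hence
\[
  \equivc{x} + (\equivc{y}\cap T) \subset (X+T) \cap [x+y]_W .
\]
The fibres $[z]_W$ for distinct $z \in Z+Z$ are pairwise disjoint, being distinct equivalence classes of the projection. This gives
\[
  |X+T| \ge \sum_{z \in Z+Z} \big|(X+T)\cap [z]_W\big| .
\]
The inequality holds and is not an equality only because we restrict to $z \in Z+Z$, although in fact every element of $X+T$ projects into $Z+Z$.

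Next, fix $z \in Z+Z$ and any pair $(x,y) \in \cS_Z(z)$. Since $x+y=z$, the inclusion above gives
\[
  \big|(X+T)\cap[z]_W\big| \ge \big|\equivc{x} + (\equivc{y}\cap T)\big| .
\]
This holds for every pair in $\cS_Z(z)$, so it holds for the maximum. If $\cS_Z(z)$ is empty we use the convention that the maximum is $0$. In fact $\cS_Z(z)$ is never empty for $z \in Z+Z$: write $z=x+y$ with $x,y\in Z$ and swap the two if necessary so that $|\equivc{x}|\ge|\equivc{y}|$. If the set $\equivc{y}\cap T$ is empty, the sumset is empty and contributes $0$, which is harmless.

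Summing over $z \in Z+Z$ yields the claim. There is no real obstacle here. The only points needing care are the disjointness of fibres for distinct $z$, and the fact that the ordering condition $|\equivc{x}|\ge|\equivc{y}|$ in $\cS_Z(z)$ plays no role in this inequality. That condition only restricts which pairs may be chosen, so it can only weaken the bound and is harmless. It is recorded for later use with \Cref{cor:blt}.
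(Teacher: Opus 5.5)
Your proposal is correct and follows essentially the same route as the paper: decompose $X+T$ into the disjoint fibres $[z]\cap(X+T)$ for $z\in Z+Z$, then use the inclusion $\equivc{x}+(\equivc{y}\cap T)\subset [z]\cap(X+T)$ whenever $x+y=z$, and take the maximum over $\cS_Z(z)$. Your side remarks (that the fibre decomposition is actually an equality, and that $\cS_Z(z)$ is nonempty) are accurate and do no harm.
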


\Cref{prop:weighted-freiman-calc} is proved in \citep{campos2024independence} as a warm-up for a more technical version required in the proof of the weighted version of \Freiman's lemma in that paper.
We reproduce its short proof here for completeness.

\begin{proof}
  Note first that
  \[X + T = \bigcup_{z \in Z + Z} [z] \cap (X + T),\]
  since the projection of \(X\) on \(W^{\perp}\) is exactly \(Z\) and \(T \subset X\).
  In fact, this is a disjoint union, because for any distinct elements \(z,w \in W^{\perp}\) we have \( [z] \cap [w] = \emptyset\).
  Thus,
  \[|X + T| = \sum_{z \in Z + Z}\big|[z] \cap (X + T)\big|,\] so it is enough to prove that
  \begin{equation}\label{eq:reduction-to-fibres}
    \big| {[ z] \cap (X + T)} \big| \geq \max_{(x,y) \in \cS_Z(z)}\big| {\equivc{x} + \equivc{y} \cap T} \big|
  \end{equation}
  holds for every \(z \in Z + Z\).
  For that, we show that for every \(x,y \in Z\) with \(x + y = z\), we have
  \begin{equation}\label{eq:reduction-to-inclusion}
    \equivc{x} + \equivc{y} \cap T \subset [ z] \cap (X + T),
  \end{equation}
  then \eqref{eq:reduction-to-fibres} follows by taking the maximum over all pairs $(x,y) \in Z^2$ with $x+y=z$, \ie, $(x,y) \in \cS_Z(z)$.
  In fact, it follows from $\equivc{y} \cap T \subset [y]$ and $\equivc{x} \subset [x]$ that
  \[\equivc{x} + \equivc{y} \cap T \subset [ x] + [ y] = [ z],\]
  which together with \(\equivc{x} \subset X\) and \(\equivc{y} \cap T \subset T\) yields \eqref{eq:reduction-to-inclusion}.
\end{proof}

Now we turn our attention to the proof of \Cref{prop:phase3}.

\begin{proof}[Proof of \Cref{prop:phase3}]
  Let \(Z = \{ z_{1},\ldots,z_{m} \}\).
  For each pair \(i,j \in \{ 1,\ldots,m \}\) satisfying \(\big| {\equivc{z_{i}}} \big| \geq \big| {\equivc{z_{j}}} \big|\), we apply \Cref{cor:blt} to obtain a set of \(3\) translates \(T_{i,j} \subset \equivc{z_{j}}\) such that
  \[\big| {\equivc{z_{i}} + \equivc{z_{j}} \cap T} \big| \geq \big| {\equivc{z_{i}} + T_{i,j}} \big| \geq | {\equivc{z_{i}}} | + | {\equivc{z_{j}}} | - 1.\]
  Defining
  \begin{equation}\label{eq:defT}
    T = \bigcup_{i = 1}^{m}\bigcup_{j = 1}^{m}T_{i,j},
  \end{equation}
  where $T_{i,j}= \emptyset$ if \(\big| {\equivc{z_{i}}} \big| < \big| {\equivc{z_{j}}} \big|,\) and applying \Cref{prop:weighted-freiman-calc}, we obtain
  \begin{equation*}
    |X + T| \geq \sum_{z \in Z + Z}\max_{(x,y) \in \cS_Z(z)}\big| {\equivc{x} + \equivc{y} \cap T} \big|.
  \end{equation*}
  We now claim that
  \begin{equation}\label{eq:weighted-freiman-bound}
    \sum_{z \in Z + Z}\max_{(x,y) \in \cS_Z(z)} \big| {\equivc{x} + \equivc{y} \cap T} \big| \geq ( \rank(Z) + 1 )|X| - \binom{|Z| + 1}{2},
  \end{equation}
  which, observe, is enough to complete the proof since \(\rank(Z) \geq \rank(X)-\dim(W) \geq r - r'.\)

  Recall that, by the definition of \(T\) in \eqref{eq:defT}, we have
  \begin{equation}\label{eq:blt}
    \big| {\equivc{x} + \equivc{y} \cap T} \big| \geq \big| {\equivc{x}} \big| + \big| {\equivc{y}} \big| - 1
  \end{equation}
  for any \(z \in Z + Z\) and \((x,y) \in \cS_Z(z)\).
  Similarly to the proof of \Freiman's lemma, our proof will proceed by induction on \(|Z|\).
  In the case \(|Z| = 1\), let \(Z = \{ z \}\).
  Observe that $\rank(Z)=0$ in this case, so the right-hand side of \eqref{eq:weighted-freiman-bound} is $|X|-1$.
  On the other hand, the left-hand side is
  \begin{equation*}
    \max_{(x,y) \in \cS_Z(z+z)} \big| {\equivc{x} + \equivc{y} \cap T} \big| =  \big| {\equivc{z} + \equivc{z} \cap T} \big| \geq 2\big| {\equivc{z}} \big| - 1 = 2|X| - 1,
  \end{equation*}
  where the inequality follows from \eqref{eq:blt} and the last equality follows from the definition of $Z$, that is, $X$ is entirely contained in the fibre $[z]$.

  Let \(|Z| \geq 2\) and assume, by induction, that \eqref{eq:weighted-freiman-bound} holds for every \(Z' \subsetneq Z\).
  Fix \(v \in Z\) to be a vertex of the convex hull of \(Z\), and let \(Z' := Z\setminus\{v\}\).
  We first consider the case \(\rank(Z') = \rank(Z) - 1\).
  In this case,
  \begin{equation}\label{eq:we-reference-the-first-term}
    \begin{aligned}
      \sum_{z \in Z + Z} \max_{(x,y) \in \cS_Z(z)} \big| {\equivc{x} + \equivc{y} \cap T} \big|
      & \geq \sum_{z \in Z' + Z'}\max_{(x,y) \in \cS_{Z'}(z)} \big| {\equivc{x} + \equivc{y} \cap T} \big| \\
      & + \sum_{\substack{z \in Z, \\ |\equivc{z}| > |\equivc{v}|}} \big|\equivc{z} + \equivc{v} \cap T \big|
      + \sum_{\substack{z \in Z, \\ |\equivc{z}| \leq |\equivc{v}|}} \big|\equivc{v} + \equivc{z} \cap T \big|
    \end{aligned}
  \end{equation}
  Now, applying the induction hypothesis to the first term of the right-hand side of \eqref{eq:we-reference-the-first-term}, and using \eqref{eq:blt} to bound the other terms, it follows that \eqref{eq:we-reference-the-first-term} is at least
  \begin{equation}\label{eq:conseq-1}
    ( \rank(Z') + 1 )\big( |X| - | {\equivc{v}} | \big) - \binom{|Z|}{2} + \sum_{z \in Z}\big( | {\equivc{z}} | + | {\equivc{v}} | - 1 \big).
  \end{equation}
  Recalling that \(\sum_{z \in Z} | {\equivc{z}} | = |X|\), we can rewrite \eqref{eq:conseq-1} as
  \begin{equation}\label{eq:conseq-2}
    ( \rank(Z') + 2 ) |X|  - \binom{|Z|}{2} - |Z| + |\equivc{v}| \big(|Z|-\rank(Z')-1\big).
  \end{equation}
  Since $\rank(Z')+2=\rank(Z)+1 \leq |Z|$, \eqref{eq:conseq-2} is at least
  \[ ( \rank(Z) + 1 )|X| - \binom{|Z| + 1}{2},\]
  which concludes the case $\rank(Z') < \rank(Z)$.

  Finally, we deal with the case \(\rank(Z') = \rank(Z)\).
  By \Cref{prop:freiman-trick}, there is a set \(U=\{ u_{1},\ldots,u_{\rank(Z) + 1} \} \subset Z\) so that \(v + u_{i} \neq v + u_{j}\) for each \(i \neq j\) and $(v+U) \cap (Z'+Z') = \emptyset$.\footnote{$v$ itself lies in $U$, but this is not important for the argument.}
  Thus,
  \begin{equation}\label{eq:again-we-reference-the-first-term}
  \begin{aligned}
    \sum_{z \in Z + Z}\max_{(x,y) \in \cS_Z(z)} \big| {\equivc{x} + \equivc{y} \cap T} \big|
    & \geq \sum_{z \in Z' + Z'}\max_{(x,y) \in \cS_{Z'}(z)}\big| {\equivc{x} + \equivc{y} \cap T} \big| \\
    & + \sum_{\substack{i \in \{1, \ldots, \rank(Z)+1\}, \\ |\equivc{u_i}| > |\equivc{v}|}} \big| {\equivc{u_{i}} + \equivc{v} \cap T} \big| \\
    & + \sum_{\substack{i \in \{1, \ldots, \rank(Z)+1\}, \\ |\equivc{u_i}| \leq  |\equivc{v}|}} \big| {\equivc{v} + \equivc{u_{i}} \cap T} \big|
  \end{aligned}
  \end{equation}
  Similarly to the previous case, we can apply the induction hypothesis to the first term of \eqref{eq:again-we-reference-the-first-term} to obtain
  \begin{equation}\label{eq:ind-hyp-case-2}
    \sum_{z \in Z' + Z'}\max_{(x,y) \in \cS_{Z'}(z)}\big| {\equivc{x} + \equivc{y} \cap T} \big| \geq ( \rank(Z') + 1 )\big( |X| - | {\equivc{v}} | \big) - \binom{|Z|}{2}.
  \end{equation}
  Moreover, applying \eqref{eq:blt} to the other two terms of \eqref{eq:again-we-reference-the-first-term}, we conclude that their sum is at least
  \begin{equation}\label{eq:blt-case-2}
    \sum_{i = 1}^{\rank(Z) + 1}( | {\equivc{u_{i}}} | + | {\equivc{v}} | - 1 ) \geq ( \rank(Z)+1)| {\equivc{v}} |,
  \end{equation}
  where the last inequality follows from the fact that $|\equivc{u_i}| \geq 1$ for each $i$.
  Therefore, combining \eqref{eq:ind-hyp-case-2} and \eqref{eq:blt-case-2} and recalling that $\rank(Z') = \rank(Z)$ in this case, we obtain
  \begin{equation*}
    \sum_{z \in Z + Z}\max_{(x,y) \in \cS_Z(z)}\big| {\equivc{x} + \equivc{y} \cap T} \big| \geq ( \rank(Z) + 1 )|X| - \binom{|Z|}{2},
  \end{equation*}
  which implies \eqref{eq:weighted-freiman-bound}, and concludes the proof.
\end{proof}

\subsection{\Freiman's lemma via few translates}\label{sec:freiman-few-translates}
Now we turn our attention to the proof of \Cref{thm:few-translates}.
For that, we will first introduce some more useful tools.
One crucial result used in the proof of \Cref{thm:few-translates} is the ``weak'' polynomial \Freiman--Ruzsa conjecture in \(\ZZ^d\), which was recently proved by \citet*{gowers2023conjecture}.
In our approach, this replaces the simple greedy geometric argument used in \citep{campos2024independence}; this replacement is essential because the greedy argument loses a factor of $2$, which we cannot afford in the present work.

\vspace{-0.1pt}

\begin{thm}[{\citep[Theorem 1.3]{gowers2023conjecture}}]\label{thm:weak-pfr}
  There exist \(C_{\dim},C_{\pfr} > 0\) such that the following holds for all \(d \in {\mathbb{N}}\) and finite sets \(X \subset {\mathbb{Z}}^{d}\).
  If \(|X + X| \leq \sigma|X|\), then there is \(X_{0} \subset X\) so that
  \[|X_{0}| \geq \sigma^{- C_{\pfr}}|X|\]
  and
  \[\rank(X_{0}) \leq C_{\dim}\log\sigma.\]
\end{thm}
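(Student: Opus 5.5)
This is the weak polynomial \Freiman--Ruzsa theorem of \citet*{gowers2023conjecture}, quoted here without proof. If I had to prove it, I would deduce it from Marton's conjecture (PFR) in $\FF_2^n$ via a $2$-adic descent, run in entropic language so that the losses do not compound.

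\textbf{Reduction mod $2$.} Translate so that $0 \in X$, and replace $\ZZ^d$ by the lattice $\Lambda$ generated by $X - X$. Change coordinates by a linear isomorphism $\Lambda \cong \ZZ^{r}$, which is a \Freiman isomorphism and preserves rank, so that $r = \rank(X)$. Since $X-X$ generates $\ZZ^r$, its image under $\pi : \ZZ^r \to \FF_2^r$ spans $\FF_2^r$. Hence
\[
  \rank(X) = \dim_{\FF_2} \operatorname{span} \pi(X).
\]
If $\pi(X)$ were itself a small-doubling set of size $\sigma^{O(1)}$, this would already give $\rank(X) \le O(\log \sigma)$. In general, though, the fibres of $\pi$ restricted to $X$ can be large.

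\textbf{Descent step.} Apply PFR in $\FF_2^r$ (in its entropic form) to the image of a random variable $U$ uniform on $X$, or on a suitable subset. This yields a subspace $H$ such that $\pi(U)$ is $O(\log \sigma)$-close in entropic distance to the uniform variable on $H$. Two cases arise.
\begin{itemize}
  \item[(i)] If $\dim H$ is small, namely $O(\log\sigma)$, then a positive-probability coset of $H$ carries a $\sigma^{-O(1)}$ fraction of $X$. Its preimage in $X$ is the desired $X_0$, because its rank is controlled by $\dim H$ through the identity above applied within the sublattice.
  \item[(ii)] Otherwise, restrict to the sublattice $\pi^{-1}(H)$, which has index $2^{r - \dim H}$, intersected with the most popular coset. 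Then rescale by $x \mapsto (x-a)/2$ wherever all points lie in a common class mod $2\ZZ^r$. This rescaling is a \Freiman isomorphism and strictly shrinks the set in the $\ell_\infty$ sense, so the process must terminate because $X$ is finite.
\end{itemize}

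\textbf{Main obstacle.} Each naive step loses a factor $\sigma^{-O(1)}$ in density, and the number of steps is unbounded, so iterating directly destroys the polynomial bound. I would handle this as GGMT do. Work with random variables $U$ supported on $X$, and choose $U$ to minimise a potential of the form
\[
  \Phi(U) = \mathrm{d}[U;U] + \eta\, \big( \log|X| - \mathbb{H}(U) \big)
\]
for a small absolute constant $\eta$. The key claim is that if the final rank is large, then the entropic PFR step produces a new variable $U'$ (the conditioned and rescaled one) with $\Phi(U') < \Phi(U)$, which contradicts minimality. To establish this I would use the fibring identity for entropic distance under the homomorphism $\pi$:
\[
  \mathrm{d}[U;U] \ge \mathrm{d}[\pi U; \pi U] + \mathbb{E}\, \mathrm{d}[U|\pi U; U|\pi U].
\]
This inequality is what converts the PFR structure in $\FF_2^r$ into a decrement in the potential.

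\textbf{Conclusion.} At a minimiser, $\mathrm{d}[U;U] \le O(\log\sigma)$ and $\mathbb{H}(U) \ge \log|X| - O(\log \sigma)$, and the rank of $\operatorname{supp} U$ is $O(\log\sigma)$. A standard passage from entropy to sets then produces $X_0 \subset \operatorname{supp} U$ with $|X_0| \ge \sigma^{-C_{\pfr}}|X|$ and $\rank(X_0) \le C_{\dim}\log \sigma$. I expect verifying the potential decrement in the descent step to be the hard part.
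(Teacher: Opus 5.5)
Citing the result is exactly what the paper does. The statement is quoted from \citep[Theorem~1.3]{gowers2023conjecture} without proof, with only a remark that \citep{green2025sumsets} derives it from PFR in $\FF_2^n$. So your first sentence is the whole of the paper's argument. The self-contained sketch you add, however, does not work as written.

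\textbf{Case (i) is false.} The identity $\rank(X)=\dim\operatorname{span}\pi(X)$ holds only for reduction modulo twice the lattice generated by $X-X$. The preimage $X\cap\pi^{-1}(x+H)$ instead sits in a coset of $\pi^{-1}(H)\supseteq 2\ZZ^r$, which is a lattice of full rank $r$. So $\dim H$ says nothing about the rank of the preimage. Concretely, take $X=2Y\cup\{e_1,\dots,e_r\}$ with $Y=\{0,e_1,2e_1,\dots,Ne_1\}\cup\{e_2,\dots,e_r\}$ and $N\gg r$.
\begin{itemize}
  \item $0\in X$, $X-X$ generates $\ZZ^r$, and $\sigma[X]=O(r)$.
  \item $\pi(U)$ is essentially a point mass at $0$, so PFR returns $H=\{0\}$.
  \item The preimage of the popular coset is $2Y$, which has rank $r$, far above $O(\log\sigma)$.
\end{itemize}
The right move in this example is to halve and recurse. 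That is the step you reserve for case (ii), where it is actually unavailable: a coset of $\pi^{-1}(H)$ is a single class mod $2\ZZ^r$ only when $H=\{0\}$.

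\textbf{Missing idea: bounding the entropy of $\pi U$.} What is missing is an a priori bound on $\mathbb{H}(\pi U)$, and this is where torsion-freeness of $\ZZ^d$ must enter. In a torsion-free group one can show $\mathrm{d}[U;2V]=O(\mathrm{d}[U;V])$. Your fibring inequality applied to $(U,2V)$, using $\pi(2V)=0$, then gives
\[
  \mathbb{H}(\pi U)\le 2\,\mathrm{d}[U;2V]=O(\mathrm{d}[U;V]).
\]
PFR gives $\log|H|\le \mathbb{H}(\pi U)+O(\mathrm{d}[\pi U;\pi U])$, so $\dim H=O(\log\sigma)$ whenever $\mathrm{d}[U;U]=O(\log\sigma)$, for instance for $U$ uniform on $X$. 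Your dichotomy on $\dim H$ is therefore not the relevant one. All of the difficulty sits in the potential decrement you leave unproved.

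\textbf{The symmetric potential is too lossy.} The fibring inequality for $\mathrm{d}[U;U]$ only controls the average $\sum_{x,y}p_xp_y\,\mathrm{d}[U_x;U_y]$ over pairs of fibres. Returning to a single fibre via $\mathrm{d}[U_x;U_x]\le 2\,\mathrm{d}[U_x;U_y]$ loses a factor $2$, which is too much to give any decrement of $\Phi$. You have to work with pairs $(U,V)$, equivalently pairs of sets $A,B$, throughout.
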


We will also need the following result of \citet*{fox2022small}, which will be used in the case that the rank of the set is small compared with its doubling.

\begin{thm}[{\citep[Theorem 1.1]{fox2022small}}] \label{thm:flpz}
  There exists \(c > 0\) such that the following holds.
  Let \(G\) be an abelian group.
  Let \(X \subset G\) be nonempty and let \(\sigma = \sigma[X]\).
  Then for each \(s \geq 1\), there exists \(T \subset X\) of size at most \(s\) such that
  \[| {X + T} | \geq c\min\big\{\sigma^{1/3},s\big\}|X|.\]
\end{thm}

The last result that we require is from our previous work~\citep{campos2024independence}.

\begin{prop}[{\citep[Proposition 4.2]{campos2024independence}}]\label{prop:Z-large}
  Let \(d,r \in {\mathbb{N}}\), \(\beta > 0\), and let \(X \subset {\mathbb{R}}^{d}\) be a finite set with \(\rank(X) \geq r\). If there exists a subspace \(W \subset {\mathbb{R}}^{d}\) such that \(|X \cap W| \geq \beta|X|\) and \(Z = Z(X,W)\) satisfies \(|Z| \geq (r + 1)/\beta\) then there is \(T \subset X\) such that
  \[|X + T| \geq (r + 1)|X|\]
  and \(|T|= (r + 1)/\beta\).
\end{prop}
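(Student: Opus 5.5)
The plan is to build $T$ directly, taking one element from each of $(r+1)/\beta$ different fibres. The part of $X$ lying in $W$ then produces disjoint translates, one inside each chosen fibre.

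First, since $W$ is a linear subspace, every $x \in X \cap W$ has $\Pi_{W^\perp}(x) = 0$. Hence $X \cap W \subset [0]_W$, and $0 \in Z$ whenever $X \cap W \ne \emptyset$, which holds because $\beta>0$ and $X$ is nonempty.

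Next, using $|Z| \ge (r+1)/\beta$, choose distinct $z_1,\dots,z_\ell \in Z$ with $\ell = (r+1)/\beta$, rounding as the paper does throughout. For each $i$ pick some $t_i \in \equivc{z_i}$, which is possible because $z_i \in Z = \Pi_{W^\perp}(X)$. Set $T = \{t_1,\dots,t_\ell\} \subset X$. The $t_i$ are distinct because they lie in distinct fibres, so $|T| = \ell$.

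The key observation is that $(X \cap W) + t_i \subset [0]_W + [z_i]_W = [z_i]_W$, by linearity of the projection. The fibres $[z_i]_W$ are pairwise disjoint, so the sets $(X\cap W)+t_i$ are pairwise disjoint subsets of $X + T$. Each has size $|X \cap W| \ge \beta |X|$, and therefore
\[
|X+T| \;\ge\; \sum_{i=1}^{\ell} |(X\cap W)+t_i| \;\ge\; \ell\,\beta|X| \;=\; (r+1)|X|.
\]

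There is no real obstacle. The only point needing care is the bookkeeping of ceilings in $\ell=(r+1)/\beta$: taking $\ell=\lceil (r+1)/\beta\rceil$ still satisfies $\ell \le |Z|$, since $|Z|$ is an integer at least $(r+1)/\beta$, and it only strengthens the bound. Note also that the hypothesis $\rank(X)\ge r$ is not used; it matters only in the intended application, where $Z$ is forced to be large.
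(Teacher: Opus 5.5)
Your proof is correct: choosing one representative $t_i$ from each of $\lceil (r+1)/\beta\rceil$ distinct fibres puts the translates $(X\cap W)+t_i$ into pairwise disjoint fibres $[z_i]$, which gives $|X+T|\ge \lceil (r+1)/\beta\rceil\,\beta|X|\ge (r+1)|X|$. The paper does not reprove this result; it quotes it from \citep{campos2024independence}, and your fibre-counting argument is the natural (and, I believe, essentially the original) proof, in which, as you note, the rank hypothesis plays no role.
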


We can now prove \Cref{thm:few-translates}.

\begin{proof}[Proof of \Cref{thm:few-translates}]
  Set \(\sigma=\sigma[X]\).
  First, we handle the case where \(r + 1 \leq c \sigma^{1/3}\) with $c$ as in \Cref{thm:flpz}.
  Applying \Cref{thm:flpz} to \(X\) with \(s = (r + 1)/c\), we obtain a set \(T \subset X\) such that
  \[|T| \leq s=\frac{r + 1}{c} \qquad \text{and} \qquad | {X + T} | \geq \min\{r+1,c\sigma^{1/3}\}|X| = (r + 1)|X|,\]
  and hence the proof is complete in this case.

  We can now assume that \(r \ge c\sigma^{1/3}-1\).
  As $r \geq 1$, it follows that
  \begin{equation}\label{eq:lb-in-r}
    r \geq \frac{c \sigma^{1/3}}{2}.
  \end{equation}
  Applying \Cref{thm:weak-pfr} to $X$, we obtain a subset \(X_{0} \subset X\) such that
  \[|X_{0}| \geq \sigma^{- C_{\pfr}}|X|\]
  and
  \[\rank X_0 \leq C_{\dim}\log\sigma \leq 3C_{\dim}\log (C'r),\]
  where the last inequality follows from \eqref{eq:lb-in-r} with $C' = 2/c$.
  Let \(W \subset \R^{d}\) be a subspace\footnote{To see that such subspace $W$ exists, take the minimal affine subspace given by the definition of $\rank(X_0)$ and (if necessary) extend it to a subspace by adding the element zero to it and taking the span.} such that $ X_0 \subset W$ and
  \begin{equation*}
    \dim W \leq 3C_{\dim}\log (C'r)+1 \leq M \log (2r)
  \end{equation*}
  for $M>0$ a sufficiently large constant.

  Now let $Z=Z(X,W)$ (see \Cref{def:defis-weighted-freiman}), and consider two cases.
  If \(|Z| \geq (r + 1)\sigma^{C_{\pfr}}\), we can apply \Cref{prop:Z-large} with \(\beta = \sigma^{- C_{\pfr}}\) to obtain \(T \subset X\) with
  \[|T| = (r + 1)\sigma^{C_{\pfr}} = O( r^{3C_{\pfr} + 1} ),\] such that \[| {X + T} | \geq (r + 1)|X|,\] as desired.
  In the other case, note first that we have
  \begin{equation}\label{eq:ub-in-Z}
    |Z| \leq (r + 1)\sigma^{C_{\pfr}} \leq M'r^{3C_{\pfr} + 1},
  \end{equation}
  by \eqref{eq:lb-in-r} if we choose $M'=2(2/c)^{3C_{\pfr}}$.
  Therefore, applying \Cref{prop:phase3}, we obtain \(T \subset X\) such that
  \[|T| \leq 3|Z|^{2} = O( r^{6C_{\pfr} + 2} )\]
  and
  \begin{equation*}
    |X + T| \geq (r +1 - M\log (2r))|X| - \binom{|Z| + 1}{2} \geq (1 - \gamma/2)(r + 1)|X|- \binom{|Z|+1}{2},
  \end{equation*}
  where the last inequality holds since \(\gamma \geq 2M \log (2r)/r\).

  We are now done if
  \[\binom{|Z| + 1}{2} \leq \frac{\gamma(r + 1)|X|}{2}.\]
  This follows from \eqref{eq:ub-in-Z} if $|X| \geq M''r^{6C_{\pfr}+2}$ holds, for $M''$ chosen sufficiently large.

  On the other hand, if $|X| < M''r^{6C_{\pfr}+2},$ then we can simply set $T = X$ and, by \Freiman's lemma (\Cref{stmt:freimansLemma}), we have
  \[|X + T| \geq (r + 1)|X| - \binom{r+1}{2} \geq (r + 1)|X| - (r+1) \gamma|X| = (1 - \gamma)(r + 1)|X|,\]
  where the last inequality follows from our assumption that $|X| \geq r /(2\gamma)$.
\end{proof}

\section{A full rank version of \Freiman's theorem}\label{sec:fullrank-freimans-thm}

In this section we will prove \Cref{thm:fullrank-freimans-thm}, and hence complete the proof of the upper bound in \Cref{thm:main}.
We restate \Cref{thm:fullrank-freimans-thm} for convenience, and refer the reader to \Cref{sec:main} for the definitions of \Freiman isomorphism and \Freiman dimension.

We will deduce \Cref{thm:fullrank-freimans-thm} from a version of \Freiman's theorem, proved by \citet{chang2002polynomial}, that yields proper progressions.
Recall first that a generalised arithmetic progression (GAP for short) $P$ is a set of the form
\begin{equation*}
  P=\bigg\{ a_0 + \sum_{i = 1}^d w_i a_i : 0 \le w_i \le \ell_i-1 \bigg\}
\end{equation*}
for some $a_0, \ldots, a_d \in \ZZ$ and $\ell_1, \ldots, \ell_d \in \N$, where $d=\dim(P)$ is the \emph{dimension} of $P$.
Moreover, we say that $P$ is proper if every element of $P$ has a unique representation of the above form; equivalently, $P$ is proper if $|P|=\prod_{i=1}^d \ell_i$.
Finally, we say $P$ is $2$-proper if $P+P$ is proper.

\begin{thm}[{\cite[Proposition 1.3 with $t=2$]{green2006compressions}}]\label{thm:chang}
  There exists an absolute constant $C > 0$ such that the following holds.
  Let $A$ be a finite subset of $\ZZ$ with $|A| > 2$ and $\doubling[A] = \doubling$.
  Then $A \subset P$ where $P$ is a $2$-proper GAP satisfying
  \begin{equation}\label{eq:changs-thm}
    \dim(P) \le 2\doubling \qquad \text{ and } \qquad |P| \le \exp(C \sigma^2 (\log \sigma)^3) |A|.
  \end{equation}
\end{thm}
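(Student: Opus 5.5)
The plan is to obtain \Cref{thm:chang} in two steps. First we find some GAP containing $A$ with the required dimension and size, without worrying about properness. Then we upgrade it to a $2$-proper GAP, losing only a factor in size that is absorbed by the bound \eqref{eq:changs-thm}.

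\textbf{Step 1 (Chang's theorem).} Since $|A+A|\le \doubling|A|$, Chang's quantitative version of \Freiman's theorem~\cite{chang2002polynomial} yields a GAP $P_1\supset A$ with
\[
  \dim(P_1)\le \lfloor \doubling-1\rfloor
  \qquad\text{and}\qquad
  |P_1|\le \exp\big(C\doubling^2(\log\doubling)^3\big)|A|.
\]
I take this as a black box. The reason Chang's bound is the right one to use is that it is polynomial in $\doubling$ in the exponent; Ruzsa's bound would be exponential. Neither properness nor $2$-properness is needed from this step, only the two bounds above.

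\textbf{Step 2 (making it $2$-proper).} This is the main step. I would follow the geometry-of-numbers argument used by Green and Tao (see also Tao--Vu, Theorem~3.40). Write $P_1=a_0+\phi(B)$, where $B=\prod_i[0,\ell_i)\cap\ZZ^{d}$ and $\phi:\ZZ^{d}\to\ZZ$ is the linear map $w\mapsto\sum_i w_ia_i$. If $P_1+P_1$ is not proper, then $\ker\phi$ contains a nonzero vector of the box $2B-2B$. We then run the standard iteration:
\begin{enumerate}[(i)]
  \item Use Minkowski's second theorem on the lattice $\ker\phi$ relative to the convex body $\conv(B)$ to find a reparametrisation of $P_1$.
  \item This reparametrisation is either $2$-proper of the same dimension, or it embeds $P_1$ into a GAP of dimension one smaller.
  \item In the latter case the size grows by at most a factor $d^{O(d)}$.
\end{enumerate}
Each iteration lowers the dimension, so at most $d$ iterations are needed. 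The final GAP $P\supset A$ is $2$-proper, has $\dim(P)\le d\le 2\doubling$, and satisfies
\[
  |P|\le d^{O(d^2)}|P_1|.
\]
Since $d\le\doubling$, we have $d^{O(d^2)}=\exp\big(O(\doubling^2\log\doubling)\big)$, and this is absorbed into $\exp\big(C\doubling^2(\log\doubling)^3\big)$ after enlarging $C$.

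\textbf{Main obstacle.} The key step is the properness upgrade in Step 2. A careless treatment can let the dimension grow; this would explain why the statement only claims $\dim(P)\le2\doubling$ rather than $\doubling$. It can also inflate the size by a factor like $\exp(d^3)$, which for $d\approx\doubling$ would still be acceptable, but not much worse. If the direct Minkowski iteration proves awkward, my fallback is to first pass to a proper GAP of the same dimension, and then dilate the generating box by a factor $2$ so as to pass to a GAP $Q$ in which $2Q$ is again proper. Both operations cost factors of size $d^{O(d^2)}$, and the hypothesis $|A|>2$ excludes the degenerate one-dimensional cases.
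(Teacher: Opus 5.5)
Your main line is correct, and it is essentially the route behind this statement. The paper gives no proof of its own; it quotes Green--Tao, whose result rests on the same two ingredients:
\begin{itemize}
  \item Chang's theorem;
  \item the geometry-of-numbers fact that a $d$-dimensional GAP in a torsion-free group lies in a $2$-proper GAP of dimension at most $d$, at the cost of a factor $d^{O(d^2)}$.
\end{itemize}
With $d=O(\doubling)$ this costs $\exp(O(\doubling^2\log\doubling))$. That is absorbed into $\exp(C\doubling^2(\log\doubling)^3)$ because $|A|>2$ forces $|A+A|\ge 2|A|-1$, hence $\doubling\ge 5/3$ and $\log\doubling\ge\log(5/3)$. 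This lower bound on $\log\doubling$ is also what lets constant factors be absorbed. That, rather than excluding degenerate one-dimensional cases, is the role of the hypothesis $|A|>2$.

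Three of your side remarks should be corrected.

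First, an $\exp(d^3)$ loss would \emph{not} be acceptable. Chang allows $d$ of order $\doubling$, and $\exp(\doubling^3)$ is not bounded by $\exp(C\doubling^2(\log\doubling)^3)$. So the quadratic exponent in $d^{O(d^2)}$ is essential, and each collapse step must cost at most $\exp(O(d\log^2 d))$, as your $d^{O(d)}$ claim does. Suppose instead a step covers the projected box by a GAP each of whose $d$ side lengths is inflated by a factor $d^{O(d)}$. Then that step alone costs $d^{O(d^2)}$, the total becomes $d^{O(d^3)}$, and the stated bound is lost. This per-step estimate is the point in Step~2 that genuinely needs care, not just an appeal to Minkowski's second theorem.

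Second, your fallback does not work. Dilating the generating box of a proper GAP $Q$ by a factor $2$ does not make $Q+Q$ proper. $2$-properness has to be enforced inside the iteration itself, by collapsing along kernel vectors of $\phi$ lying in $(B+B)-(B+B)$, exactly as in your main Step~2.

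Third, the bound $\dim(P_1)\le\lfloor\doubling-1\rfloor$ cannot hold when $\doubling<2$. An arithmetic progression of length $k\ge 3$ has $\doubling<2$ but needs dimension $1$. Chang's bound $\lfloor\alpha-1\rfloor$ is for the strict hypothesis $|A+A|<\alpha|A|$. Any $O(\doubling)$ bound suffices for your purposes, so this does not affect the argument.
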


To deduce \Cref{thm:fullrank-freimans-thm}, we will embed the $2$-proper GAP given by \Cref{thm:chang} into $\ZZ^d$ for some $d \leq 2\sigma$ via a natural map with an affine inverse (see \eqref{eq:def-of-phi}).
The problem now is that the image of $A$ via this map might not be full rank.
For instance, if the GAP has dimension $2$ but the \Freiman dimension of $A$ is larger, then the image of $A$ under this map might still lie in a line, and thus have rank $1$, failing \cref{item:fullrank-freimans-thm} in \Cref{thm:fullrank-freimans-thm}.
To overcome this obstacle, we will use the following \namecref{stmt:hunter-stronger-proper-gap} of \citet{green2022new}.

\begin{lem}[{\cite[Lemma A.2]{green2022new}}]\label{stmt:hunter-stronger-proper-gap}
  Let $q \ge 1$ be a parameter.
  Let $V \le \QQ^d$ be a subspace spanned over $\QQ$ by linearly independent vectors $v^{(1)}, \ldots, v^{(m)} \in \ZZ^d$ where $\norm{v^{(i)}}_\infty \le q$ for each $i \in [m]$.
  Then there is an integral basis $w^{(1)}, \ldots, w^{(m)} \in V \cap \ZZ^d$ such that every $x \in V \cap \ZZ^d$ with $\norm{x}_\infty \le q$ is a (unique) $\ZZ$-linear combination $x = \sum_{i = 1}^m n_i w^{(i)}$ with $|n_i| \le m! (2 q)^m$.
\end{lem}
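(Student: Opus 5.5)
The plan is to work in the rank-$m$ lattice $\Lambda := V \cap \ZZ^d$, find a basis of $\Lambda$ whose vectors are short, and then read off the coefficients $n_i$ of a short vector $x$ via Cramer's rule. The bound on $|n_i|$ will come from an integer determinant in the numerator (which is at most $m!$ times a product of entries) and a nonzero integer determinant in the denominator (which is at least $1$).

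\emph{Step 1 (coordinates).} Since $v^{(1)}, \ldots, v^{(m)}$ are linearly independent, some $m \times m$ minor of the matrix with columns $v^{(i)}$ is nonzero. Fix such a set $S$ of $m$ coordinates. Projecting onto these coordinates, $\pi_S : V \to \QQ^m$ is injective, so it identifies $\Lambda$ with a full-rank sublattice of $\ZZ^m$. The sublattice $L = \bigoplus_i \ZZ v^{(i)}$ has finite index in $\Lambda$, and $\Lambda$ is also the $\QQ$-span of the $v^{(i)}$ intersected with $\ZZ^d$. For any basis $w^{(1)}, \ldots, w^{(m)}$ of $\Lambda$ and any $x \in \Lambda$, the coefficient $n_i$ is intrinsic: it equals the ratio
\[
  \frac{w^{(1)} \wedge \cdots \wedge x \wedge \cdots \wedge w^{(m)}}{w^{(1)} \wedge \cdots \wedge w^{(m)}}
\]
in the one-dimensional space $\wedge^m V$. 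Hence it can be computed as $\det_{S'}(\cdots x \cdots)/\det_{S'}(w)$ for any coordinate set $S'$ on which the denominator is nonzero. Since the $w^{(j)}$ are integral, the denominator is a nonzero integer, so $|n_i| \le |\det_{S'}(w^{(1)}, \ldots, x, \ldots, w^{(m)})| \le m!\,\norm{x}_\infty \prod_{j \ne i} \norm{w^{(j)}}_\infty$.

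\emph{Step 2 (short basis).} Next I need a basis of $\Lambda$ with $\ell_\infty$-norms bounded in terms of $q$. The $v^{(i)}$ are $m$ independent vectors of $\Lambda$ in the box $K = [-q,q]^d \cap V$, so all successive minima of $K$ with respect to $\Lambda$ are at most $1$. By Mahler's lemma (equivalently, completing a flag basis: take $w^{(i)} \in \Lambda \cap \mathrm{span}(v^{(1)}, \ldots, v^{(i)})$ extending a basis of $\Lambda \cap \mathrm{span}(v^{(1)}, \ldots, v^{(i-1)})$, and reduce it modulo earlier vectors and $v^{(i)}$), we obtain a basis of $\Lambda$ with $w^{(i)} = \sum_{j \le i} \theta_{ij} v^{(j)}$ and $\theta_{ij} \in [0,1)$ for $j < i$, $0 < \theta_{ii} \le 1$. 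This gives $\norm{w^{(i)}}_\infty \le i q$ from the triangle inequality, and more carefully, after centering the reductions in $(-1/2, 1/2]$, something like $\norm{w^{(i)}}_\infty \le 2q$ up to small factors. Uniqueness of the representation $x = \sum n_i w^{(i)}$ is automatic because the $w^{(i)}$ form a basis.

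\emph{Step 3 (combine).} Plugging the bound from Step 2 into the Cramer estimate of Step 1 gives $|n_i| \le m!\, q\, (2q)^{m-1} \le m!(2q)^m$.

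The main obstacle is Step 2 with the exact constant. The naive flag reduction only gives $\norm{w^{(i)}}_\infty \le (1 + (i-1)/2)q$, which would cost an extra factor of roughly $m!/2^m$. I would first try to avoid bounding $\norm{w^{(j)}}_\infty$ at all. Instead, I would use the triangular structure directly: expanding each $w^{(j)}$ in the $v$'s, the wedge product $w^{(1)} \wedge \cdots \wedge w^{(m)}$ equals $\prod_j \theta_{jj}$ times $v^{(1)} \wedge \cdots \wedge v^{(m)}$. Then bound $n_i$ by solving the triangular system for $x$'s $v$-coordinates, which are controlled via Cramer with denominator $\det_S(v)$. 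If that gives exponential blow-up, the fallback is to accept a slightly weaker constant such as $m!(2q)^m m^m$, which would be harmless for the application in \Cref{thm:fullrank-freimans-thm}, since there only $\log s$ matters.
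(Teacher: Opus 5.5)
The paper does not prove this lemma (it is quoted from Green's paper), so I assess your argument on its own terms. Step 1 is correct, but Step 2 asserts something false and Step 3 depends on it. Centring the flag reduction gives only $\|w^{(i)}\|_\infty\le\frac{i+1}{2}q$, and in general no basis does essentially better.

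For a counterexample, let $P\le\ZZ^m$ be the lattice of vectors whose coordinates all have the same parity. Embed $\QQ^m$ into $\QQ^d$ by $x\mapsto\bigl(x,(\tfrac{x_i-x_{i+1}}{2})_{i<m},(\langle\epsilon,x\rangle)_{\epsilon\in\{\pm1\}^m}\bigr)$. Then $V\cap\ZZ^d$ is the image of $P$, the sup norm of the image of $x$ is $\|x\|_1$, and the images of $2e_1,\dots,2e_m$ give $q=2$. However, every basis of $P$ contains a vector with all coordinates odd, and such a vector has norm at least $m$.

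So the bound ``$\|w^{(i)}\|_\infty\le 2q$ up to small factors'' is not available. Short basis plus Cramer then yields only $|n_1|\le m!\,q^m(m+1)!/2^m$, which exceeds $m!(2q)^m$ for $m\ge 6$. Your fallback $m!(2q)^m m^m$ would suffice for \Cref{thm:fullrank-freimans-thm}, but it is not the stated lemma.

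The alternative you sketch at the end is the missing step, and it works without losing more than the statement allows. Fix $S$ with $\Delta=\det_S(v^{(1)},\dots,v^{(m)})\neq0$ and write $x=\sum_j\mu_jv^{(j)}$; Cramer gives $|\mu_j|\le m!\,q^m/|\Delta|$. Take your flag basis $w^{(i)}=\sum_{j\le i}\theta_{ij}v^{(j)}$ with $0\le\theta_{ij}<1$ for $j<i$ and $0<\theta_{ii}\le1$. The integer $\det_S(w)=\Delta\prod_i\theta_{ii}$ is non-zero, so $\prod_i\theta_{ii}^{-1}\le|\Delta|$. From $\mu_j=\sum_{i\ge j}\theta_{ij}n_i$ you get $|n_j|\le\theta_{jj}^{-1}\bigl(|\mu_j|+\sum_{i>j}|n_i|\bigr)$. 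Downward induction from $j=m$, using $\theta_{ll}^{-1}\ge1$, gives $|n_j|\le 2^{m-j}\bigl(\prod_{l\ge j}\theta_{ll}^{-1}\bigr)m!\,q^m/|\Delta|\le 2^{m-1}m!\,q^m\le m!(2q)^m$.

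The point is that the Cramer denominator $|\Delta|$ is cancelled by the index factor $\prod_i\theta_{ii}^{-1}$, and the triangular solve costs only $2^{m-1}$, so short basis vectors are never needed. Even more simply, you can pull back a Hermite normal form basis of $\pi_S(V\cap\ZZ^d)\le\ZZ^m$ and solve the triangular system coordinate by coordinate; this gives $|n_i|\le 2^{i-1}q$.
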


After embedding the GAP $P$ into $\ZZ^{d}$ in the proof, we apply \Cref{stmt:hunter-stronger-proper-gap} to the minimal affine subspace that contains the image of $A$ via this map.
In this way, we obtain a new basis for this affine subspace such that every element of $A$ has a unique representation as an integer linear combination of these new basis vectors with bounded coefficients, which gives us the desired bound in \eqref{eq:fullrank-freimans-thm-size-of-box}.

To obtain better bounds, we could replace \Cref{thm:chang} with the result of \citet{sanders2012bogolyubov}, or the very recent result of \citet{raghavan2025improved}, together with the procedure of Green's notes \citep{green2005notes} to obtain proper progressions from general ones (see \cite[Section~4]{nenadov2025remark}, where this is done for cyclic groups).
However, the bounds obtained in this way would not significantly improve the quantitative aspect of our main result, so we have chosen to use the more classical statement of \Cref{thm:chang}.

\begin{proof}[Proof of \Cref{thm:fullrank-freimans-thm}]
  We apply \Cref{thm:chang} to $A$ and obtain $P$, a $2$-proper GAP satisfying \eqref{eq:changs-thm}.
  Let $d_0 = \dim(P) \le 2\sigma$, $s_0=|P| \le \exp(C \sigma^2 (\log \sigma)^3) k$ and define the function
  \[\phi_P : P \to P_0  \subset \ZZ^{d_0},\]
  with $P_0 = \phi_P(P)$, as the natural embedding of $P$ into $\ZZ^{d_0}$, \ie,
  \begin{equation}\label{eq:def-of-phi}
    \phi_P\bigg(a_0 + \sum_{i = 1}^{d_0} w_i a_i\bigg) = (w_1, \ldots, w_{d_0}).
  \end{equation}
  Observe this is well-defined since $P$ is $2$-proper and, therefore, each element of $P$ has a unique representation as in \eqref{eq:def-of-phi}.
  Moreover, $P_0 \subset [0, s_0]^{d_0}$ because whenever $a_0+ \sum_{i=1}^{d_0} w_i a_i \in P$ we have $0 \le w_i \le \ell_i - 1$ and $\ell_i \le \prod_{j=1}^{d_0}\ell_j = |P| = s_0$ for each $i \in [d_0]$.
  Note also that $\phi_P$ is a \Freiman isomorphism onto its image, since it is a bijection whose inverse is affine and $P$ is $2$-proper.
  Let also
  $A_0 = \phi_P(A) \subset \ZZ^{d_0}$, and observe that
  \begin{equation}\label{eq:infty-norm-a}
    \norm{a}_\infty \le s_0 \quad \text{ for every } a \in A_0.
  \end{equation}

  Take $V \le \QQ^{d_0}$ to be a minimal dimension subspace such that there exists $v^{(0)} \in A_0$ with $A_0 \subset v^{(0)}+V$ which, in particular, satisfies $\rank(A_0) = \dim(V) = d$.
  As $A_1 = A_0 - v^{(0)}$ is full rank in $V$, we can find linearly independent $v^{(1)}, \ldots, v^{(d)} \in A_1 \subset \ZZ^{d_0}$ that span $V$ and, as a consequence of \eqref{eq:infty-norm-a}, satisfy $\norm{v^{(i)}}_\infty \le 2 s_0$ for every $i \in [d]$.

  We can then apply \Cref{stmt:hunter-stronger-proper-gap} with $m = d$ and $q = 2 s_0$ to obtain $w^{(1)}, \ldots, w^{(d)} \in V \cap \ZZ^{d_0}$ such that every $x \in V \cap \ZZ^{d_0}$ with $\norm{x}_\infty \le 2 s_0$ is a unique $\ZZ$-linear combination
  \begin{equation*}
    x = \sum_{i = 1}^{d} n_i w^{(i)}
  \end{equation*}
  with $|n_i| \le (4 d s_0)^{d}$ for all $i \in [d]$.
  In particular, this holds for every $a' \in A_1$, so that we can uniquely write
  \begin{equation}\label{eq:unique-rep-for-a'-in-A_1}
    a' = a - v^{(0)} = \sum_{i = 1}^{d} n_i w^{(i)}
  \end{equation}
  with $n_i \in \ZZ$ and $|n_i| \le (4 d s_0)^{d}$ for all $i \in [d]$.
  The uniqueness of the solutions in \eqref{eq:unique-rep-for-a'-in-A_1} for every $a' \in A_1$ allows us to define the \Freiman isomorphism $\psi : A_0 \to A_2 \subset \ZZ^{d}$ as
  \begin{equation*}
    \psi(a) = (n_1, \ldots, n_{d})
  \end{equation*}
  for each $A_0 \ni a = v^{(0)} + \sum_{i = 1}^{d} n_i w^{(i)}$ with $n_i$ as in \eqref{eq:unique-rep-for-a'-in-A_1}.

  Define now
  \[\varphi:=(\phi_P|_A)^{-1} : A_0 \to A.\]
  We claim that we can take
  \[\phi = \varphi \circ \psi^{-1} : A_2 \to A,\]
  $s = (4 d s_0)^{d}$ and $B = A_2$ to satisfy the statement of \Cref{thm:fullrank-freimans-thm}.
  Since $\psi^{-1}$ and $\varphi$ are affine bijections, $\phi$ is an affine bijection, as desired, and, therefore, it is also a \Freiman isomorphism.

  Observe that $B \subset [-s, s]^d$ by the definition of $\psi$, and furthermore that we have $$d \le \dimF(A) \le 2 \doubling.$$
  Here, the first inequality follows since $B \subset \ZZ^d$ is full rank and \Freiman isomorphic to $A$, so $d \leq \dimF(A)$ by definition of \Freiman dimension.
  The second inequality is due to \Cref{prop:dim-doubling}.
  Thus, it remains only to bound $s$ by
  \begin{equation*}
    s = (4 d s_0)^{d} \le \big(8 \doubling  \exp(C \doubling^2 (\log \doubling)^3) \, k\big)^{2\doubling} \le \exp\big(C' (\doubling \log \doubling)^3\big)k^{2\doubling}
  \end{equation*}
  where the first inequality follows from \eqref{eq:changs-thm} and the definition of $s_0$, and the second from taking an absolute constant $C' > 0$ suitably larger than $C$, completing the proof.
\end{proof}

\section{Lower bound construction}\label{sec:construction}

In this section we will prove the lower bound of \Cref{thm:main}.
In fact, we will prove it for a wider range of parameters, as stated in the following theorem, and then show that it implies the bound in \Cref{thm:main}.
Recall that we defined
\[\lambda(d)=\lambda_{k,m}(d) = \bigg\lfloor\frac{m-(d-1)k + \binom{d+1}{2}}{2}\bigg\rfloor-d+1.\]
\begin{thm}\label{thm:lower-bound}
  Let $n,k, m \in \N$.
  If $k \leq m \leq n/4$, then
  \begin{equation}\label{eq:lower-bound}
    \big|\Lambda_{n,k,m}\big| \geq \sum_{d} \frac{n^{d+1}}{64m(2d)^d}  \binom{\lambda(d)-2}{k-d-1},
  \end{equation}
  where the sum in \eqref{eq:lower-bound} is over $d \in [k-1]$ satisfying
  \begin{equation}\label{eq:condition-on-d-lb}
    (d+1)k -\binom{d+1}{2} \leq m.
  \end{equation}
\end{thm}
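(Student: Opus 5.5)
Since the sum in \eqref{eq:lower-bound} has nonnegative terms, it suffices to show that for each fixed $d \in [k-1]$ satisfying \eqref{eq:condition-on-d-lb} we have
\[
|\Lambda_{n,k,m}| \geq \frac{n^{d+1}}{64m(2d)^d}\binom{\lambda(d)-2}{k-d-1}.
\]
Indeed, each term is at most $|\Lambda_{n,k,m}|$, and summing over the at most $2\sigma$ admissible values of $d$ (by \Cref{prop:dim-doubling}) loses only a factor absorbed in the constants. Strictly, the bound for a sum requires counting disjointly, so the cleanest route is to make the families for distinct $d$ disjoint. They will be, if the sets built for a given $d$ have \Freiman dimension exactly $d$. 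Alternatively, one can accept a factor $2\sigma$ loss, noting $64m \ge 2\sigma\cdot$const.

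\textbf{Construction.} Fix $d$ and set $\ell = \lambda(d)$. Choose the progression $P=\{a, a+q, \ldots, a+(\ell-1)q\}$ with $q\ge 1$, and put $\{a, a+(\ell-1)q\} \subset A$. This pins the endpoints, which is why the binomial is $\binom{\ell-2}{k-d-1}$. Add $k-d-1$ further elements of $P$. Then choose $d-1$ points $y_1,\dots,y_{d-1} \in [n]$ that are \emph{dissociated} relative to $P$: all sums $y_i+y_j$, $y_i+P$, and $P+P$ are pairwise disjoint as blocks, and the $y_i$ are not in $P$. The set $A = S\cup\{y_i\}$ has
\[
|A+A| \le |S+S| + (d-1)|S| + \binom{d}{2}.
\]
Using $|S+S|\le 2\ell-1$ and $|S|=k-d+1$, the definition \eqref{eq:def-of-lambda} of $\lambda$ gives exactly $|A+A|\le m$. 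This is a routine check that $2\ell -1 + (d-1)(k-d+1)+\binom d2 \le m$.

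\textbf{Counting.} We count pairs $(a,q)$ with $a+(\ell-1)q\le n$. Since $\ell \le m \le n/4$, there are at least $n^2/(c\,m)$ such pairs: take $q \le n/(2\ell)$ and $a \le n/2$. Next we count ordered $(d-1)$-tuples $(y_i)$ avoiding the forbidden configurations. Each $y_i$ must avoid $O(d\, m)$ "bad" positions determined by $P$ and the earlier $y_j$ (namely the condition $y_i \notin P+P-P$ restricted to what matters, which uses the structure $P+P-P\subset$ an AP of length $3\ell$, together with $y_i\notin \tfrac12(\ldots)$ and so on). So as long as $n/4\ge m$ there are at least $n/2$ choices per point, giving $\ge (n/2)^{d-1}/(d-1)!$ unordered choices.

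\textbf{Main obstacle: injectivity.} We must show that a given set $A$ arises from few parameter choices. Given $A$, I would recover $P$ as follows. For $d\ge 2$, the elements $y_i$ are the points whose removal drops the doubling structure; equivalently $S$ is the unique subset of size $k-d+1$ contained in an AP of length $\ell$ with both endpoints in $A$. Once $S$ is recovered, its minimum and maximum, together with $\ell$, determine $a$ and $q$ (the latter because $q=(\max-\min)/(\ell-1)$). The delicate point is ruling out alternative decompositions, e.g.\ where some $y_i$ happens to lie in another AP with the same endpoints. I would handle this by bounding, crudely, the number of decompositions of any $A$ by the number of ways to choose which $d-1$ elements are the "outside" points and which two are endpoints. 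This gives a factor of at most $\binom{k}{d-1}$, which is too large, so instead I would use the genericity of the $y_i$ to force uniqueness, and accept that the $(2d)^d$ and $64m$ in the denominator absorb $(d-1)!\,2^{d}$ and the constants. Any leftover overcount is bounded by $2^d d^d$ via ordering the $y_i$, matching the stated denominator.
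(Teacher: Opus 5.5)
Your sumset estimate is correct and matches the paper's. For \emph{any} choice of the $d-1$ extra points, $|A+A| \le |S+S| + (d-1)|S| + \binom{d}{2} \le m$, so the dissociativity conditions are not needed for membership in $\Lambda_{n,k,m}$. The genuine gap is injectivity, which you flag yourself, and none of your suggested fixes closes it.
\begin{itemize}
\item \textbf{Disjointness across $d$.} Summing over $d$ requires the families for different $d$ to be disjoint, and your justification is false. Under your own dissociativity conditions the only additive relations in $A$ are those inside $S$, so $\dim_F(A) = \dim_F(S) + d - 1$. But $S$ can have \Freiman dimension larger than $1$: an initial segment of $P$ together with its far endpoint has \Freiman dimension $2$ when $\lambda(d)$ is large compared with $k-d$. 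Falling back on losing a factor equal to the number of admissible $d$ does not give the stated inequality, because $\sigma$ is unbounded.
\item \textbf{Recovery for fixed $d$.} ``Use the genericity of the $y_i$ to force uniqueness'' is not an argument. You never say how to tell which elements of $A$ are the $y_i$. Since the $y_i$ may lie inside the range of $P$, $\min A$ and $\max A$ need not be the pinned endpoints.
\item \textbf{Counting generic points.} The count of at least $n/2$ choices per $y_i$ does not follow from your premises. Each $y_i$ must avoid $O(d\,m)$ positions, and under the sole hypothesis $n \ge 4m$ this is not at most $n/2$ once $d$ is large.
\end{itemize}

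The missing idea, which is what the paper does, is to separate the two parts of $A$ by position rather than by genericity. Put $P \subset [n/2]$ by taking the first term at most $n/4$ and the common difference at most $n/(4m)$. This is possible since $\lambda(d) \le m$, and it still leaves at least $n^2/(64m)$ progressions. Take the extra points to be an arbitrary $(d-1)$-subset $T \subset [n]\setminus[n/2]$; nothing needs to be excluded, and there are $\binom{n/2}{d-1} \ge n^{d-1}/(2d)^d$ choices. Now $T = A \setminus [n/2]$ and $d = |T|+1$ can be read off from $A$, so the families for distinct $d$ are disjoint and summing is legitimate. Moreover $S = A \cap [n/2]$, and the two pinned elements determine $P$. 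The paper pins the first two terms of $P$, which are the two smallest elements of $A$. Your choice of pinning both endpoints works equally well, since $\min S$, $\max S$ and $\ell = \lambda(d)$ determine $P$.
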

The idea of the proof is simple: we will choose a set of size $k-(d-1)$ inside an arithmetic progression $P\subset [n/2]$ of size $\lambda(d)$ and let the remaining $d-1$ elements be arbitrary points of the set $\{n/2+1, \ldots, n\}$.
Typically, we expect each of these $d-1$ elements to contribute with $k$ to the size of the sumset.
Below we formalise this idea.
\begin{proof}
  Fix $d \in [k-1]$ satisfying \eqref{eq:condition-on-d-lb} and observe that this implies that
  \begin{equation}\label{eq:lb-lambda}
    \lambda(d) \geq k-d+1.
  \end{equation}
  Choose $1 \leq a\leq n/4$ and $1 \leq b \leq n/(4m)$ arbitrarily and define the arithmetic progression
  \[P=\Big\{a+ib : 0 \leq i \leq \lambda(d) -1\Big\}.\]
  Note that there are $\lfloor n/(4m) \rfloor (n/4)  \geq n^2/(64m)$ choices for the progression $P$, and that we have
  \[\max P \leq a+ m b \leq n/2,\]
  by our choice of $a$ and $b$ and since $\lambda(d) \leq m$, and hence $P \subset [n/2]$.

  Now, choose $T \subset [n]\setminus [n/2]$ arbitrarily with $|T|=d-1$ and note that the number of possible choices for $T$ is at least $\binom{n/2}{d-1} \geq (2d)^{-d}n^{d-1}$.
  Finally, we choose $A_0 \subset P$ with $|A_0|=k-d+1$ such that $a,a+b \in A_0$, where $A_0$ fits in $P$ by \eqref{eq:lb-lambda}.
  Given $P$, then, the number of possible choices for $A_0$ is at least
  \[\binom{\lambda(d)-2}{k-d-1}.\]

  Define $A=A_0 \cup T$, so that $|A|=k$.
  Now, let us show that $|A+A| \leq m$.
  In fact, the size of the sumset is at most
  \begin{equation}\label{eq:bound-sumset}
    |A+A|\leq |A_0+A_0|+|T+A_0|+|T+T|,
  \end{equation}
  so we bound each term from above separately.
  From $A_0+ A_0 \subset P+P$ and recalling that $P$ is an arithmetic progression, we have
  \[|A_0+A_0| \leq |P+P| = 2|P|-1 \leq m-(d-1)k+\binom{d+1}{2}-2d+1.\]
  We also trivially have that
  \[|T+T|\leq \binom{|T|}{2}+|T|=\binom{d}{2}\]
  and
  \[|T+A_0| \leq |T| \, |A_0| \le (d-1)(k-d+1) = (d-1)k - (d-1)^2,\]
  and substituting these in \eqref{eq:bound-sumset}, we obtain $|A+A| \leq m$, since
  \[ \binom{d+1}{2} - 2d + 1 + \binom{d}{2} - (d-1)^2 = 0.\]
  Thus, $A \in \Lambda_{n,k,m}$.

  The preceding process defines, for each $d \in [k-1]$ satisfying \eqref{eq:condition-on-d-lb}, a triple $(P,T,A)$ with $A \in \Lambda_{n,k,m}$.
  Furthermore, a lower bound on the number of choices for this triple is exactly the summand in \eqref{eq:lower-bound}.
  Therefore, to finish the proof, it is enough to show that we can determine $d$, $T$ and $P$ from $A$.
  To obtain $T$ and $d$, we simply take $T=A \setminus [n/2]$ and $d=|T|+1$.
  Since $d$ is already determined, recovering $P$ from $A$ reduces to determining $a$ and $b$.
  Recall that, as a consequence of our construction, $A$ contains both $a$ and $a+b$, and that these are its two smallest elements.
  Hence, $a$ is the smallest element of $A$, while $b$ is the difference between the second smallest element of $A$ and $a$.
\end{proof}

Now let us check that \Cref{thm:lower-bound} implies the lower bound in \Cref{thm:main}.
\begin{proof}[Proof of the lower bound in \Cref{thm:main}]

  First, observe that we have exactly the same condition \eqref{eq:condition-on-d-lb} on $d \in [k-1]$ in \Cref{thm:lower-bound} and in \Cref{thm:main}, so that the sum is over the same values of $d$ in both theorems.
  Now, to obtain the right binomial term in the bound, note that
  \[\binom{\lambda(d)-2}{k-d-1} = \frac{k-d}{\lambda(d)-1} \, \frac{k-d+1}{\lambda(d)}  \, \binom{\lambda(d)}{k-d+1} \geq \frac{1}{m^2} \binom{\lambda(d)}{k-d+1},\]
  where the last inequality follows from $\lambda(d) \leq m$ and $d \leq k-1$.
  Therefore, from \eqref{eq:lower-bound}, we have
  \begin{equation}\label{eq:intermediate-lb}
    |\Lambda_{n,k,m}| \geq \sum_{d} \frac{1}{64m^3(2d)^d} n^{d+1} \binom{\lambda(d)}{k-d+1},
  \end{equation}

  We also have $m \leq k^{1+\alpha} \leq 2^{o(k)}$, which implies $64m^3 \leq 2^{o(k)}$.
  Furthermore, \Cref{prop:dim-doubling} implies that $d \leq \sigma \leq k^\alpha$, and hence $(2d)^d \leq 2^{o(k)}$.
  Substituting these estimates into \eqref{eq:intermediate-lb} yields
  \[|\Lambda_{n,k,m}| \geq 2^{o(k)}\sum_{d} n^{d+1} \binom{\lambda(d)}{k-d+1},\]
  as required.
\end{proof}

\section*{Acknowledgements}

We are grateful to Rob Morris for his careful reading of this paper and for his valuable suggestions, which have significantly improved its presentation.
We would also like to thank Zach Hunter for bringing \Cref{stmt:hunter-stronger-proper-gap} to our attention.

During this work, Marcelo Campos was supported by Serrapilheira (grant R-2412-51283), and Jo\~{a}o Pedro Marciano was supported by a FAPERJ Bolsa Nota 10.

\section*{AI usage statement}
The work leading to the proof of \Cref{thm:main} was carried out by the authors in 2025, without the use of any AI tools.
The only use of AI during the writing of the paper was to proofread the final version.

\renewcommand*{\bibfont}{\normalfont\small}
\printbibliography

\appendix
\section{Binomial calculations}\label{sec:appendix-calc}

In this appendix, we establish the following inequality, which is used in the proof of \Cref{lem:reduction-to-Zd}.
\begin{lem}\label{lem:binom-main}
  Let $m,k,d \in \NN$ be as in \Cref{thm:main}, let $k^{-1/2}<\gamma<1/5$.
  Then we have
  \begin{equation}\label{eq:binom-main}
    \max_{r \in [d]} \binom{(1 + 5\gamma)\frac{m - (d - r)k}{r + 1}}{k} \leq 2^{30\gamma \log(1/\gamma) k}\binom{\lambda_{k,m}(d)}{k-d+1}.
  \end{equation}
\end{lem}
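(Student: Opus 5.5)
The plan is to reduce \eqref{eq:binom-main} to a comparison of two binomials written in the form $\binom{k+x}{x}$, where $x$ is the ``excess'' of the top entry over $k$. Throughout, fix $r\in[d]$ and set $N_r=\frac{m-(d-r)k}{r+1}$, $x_r=N_r-k$, $E=m-(d+1)k$.

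\textbf{Step 1 (excess of $N_r$).} A direct computation gives
\[
x_r=\frac{m-(d-r)k-(r+1)k}{r+1}=\frac{E}{r+1}\le \frac{E}{2},
\]
since $r\ge1$. We may assume $(1+5\gamma)N_r\ge k$, since otherwise the left-hand binomial vanishes.

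\textbf{Step 2 (excess of $\lambda_{k,m}(d)$).} Set $y=\lambda_{k,m}(d)-(k-d+1)$. Up to floors,
\[
y=\frac{E+\binom{d+1}{2}}{2}\ge \frac{E}{2}-1,
\]
and $y\ge0$ by \eqref{eq:lb-lambda}. So the target binomial is $\binom{(k-d+1)+y}{y}$ with $y\ge x_r-1$. The factor $(1+5\gamma)$ and the shift $k\to k-d+1$ will be treated as perturbations.

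\textbf{Step 3 (the shift $k\to k-d+1$).} I will use monotonicity of $\binom{k+x}{x}$ in $x$, and the bound
\[
\frac{\binom{k+y}{y}}{\binom{k-d+1+y}{y}}=\prod_{i=k-d+2}^{k}\frac{i+y}{i}\le\Big(1+\frac{y}{k-d}\Big)^{d-1}\le(2\sigma)^{d}.
\]
Here I use $y\le m\le\sigma k$ and $d\le k/2$. By \Cref{prop:dim-doubling}, $d\le\sigma\le k^{\alpha}$, so this factor is $2^{O(k^{\alpha}\log k)}$. This is at most $2^{\gamma k}$ because $\gamma>k^{-1/2}$ and $\alpha<1/4$. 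The $-1$ slack in $y\ge x_r-1$ costs at most a factor $(k+y)\le 2m$, which is also negligible.

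\textbf{Step 4 (the factor $1+5\gamma$; main obstacle).} We have $\binom{(1+5\gamma)N_r}{k}=\binom{k+x_r+\delta}{k}$ with $\delta=5\gamma N_r\le5\gamma(k+y+1)$. The task is to bound
\[
\frac{\binom{k+y+\delta}{k}}{\binom{k+y}{k}}\le 2^{O(\gamma\log(1/\gamma)k)}.
\]
The naive product bound fails when $y$ is small compared with $\delta$, so I will split into two cases.
\begin{itemize}
\item If $y\ge k$, then $\delta\le 15\gamma y$. The ratio equals $\prod_{j=1}^{k}\frac{y+\delta+j}{y+j}\le(1+15\gamma)^{k}\le e^{15\gamma k}$.
\item If $y<k$, then $\delta\le 10\gamma k$. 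Write the ratio as $\prod_{i=1}^{\delta}\frac{k+y+i}{y+i}\le\binom{k+y+\delta}{\delta}$. Since $k+y\le 2k$, this is at most $\big(e(2k+\delta)/\delta\big)^{\delta}\le 2^{O(\gamma\log(1/\gamma)k)}$, because $t\mapsto t\log(ek/t)$ is increasing for $t\le k$.
\end{itemize}

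Combining Steps 3 and 4 with monotonicity in $x$ gives the bound uniformly in $r$, with constant at most $30$ after bookkeeping. The place needing most care is the small-$y$ regime of Step 4, including the degenerate situation where $E$, and hence $y$, is close to $0$; the entropy estimate above is designed precisely for that case.
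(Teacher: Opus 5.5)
Your argument is correct after one small repair, and apart from its first step it follows the same lines as the paper's proof.

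\textbf{The repair (Step~1).} The inequality $x_r=E/(r+1)\le E/2$ holds only when $E\ge 0$. The hypothesis $(d+1)k-\binom{d+1}{2}\le m$ only guarantees $E\ge -\binom{d+1}{2}$, so $E$ may be negative. For example, $d=2$ and $m=3k-3$ give $E=-3$ and $N_1=k-\tfrac32<N_2=k-1$. Then $x_2>E/2$, and the maximum over $r$ is at $r=d$. This is exactly the regime $\eta=d+1-\sigma>0$ that the paper treats separately in \Cref{thm:binom-opt}.

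The conclusion you actually use still holds. The quantity $E+\binom{d+1}{2}$ is a nonnegative integer that is at least $E+1$, so $y=\lfloor (E+\binom{d+1}{2})/2\rfloor\ge\max\{E/2,0\}$. On the other hand, $x_r\le\max\{E/2,0\}$ for every $r\in[d]$, since all $x_r$ are negative when $E<0$. Hence $N_r\le k+y=\lambda_{k,m}(d)+d-1$ for all $r$ simultaneously, and the $-1$ slack is not needed at all. You should state Step~1 in this form rather than via $x_r\le E/2$.

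\textbf{Comparison with the paper.} Your $\binom{k+y}{k}$ is exactly the paper's intermediate quantity $\binom{\lambda(d)+d-1}{k}$. Your split into $y\ge k$ and $y<k$ essentially matches the paper's cases $M>2$ and $M\le 2$ in \Cref{thm:binom-simp}. The paper's separate case $M<1$ (equivalently $E<0$) is absorbed into your case $y<k$. Your Step~3 plays the role of the paper's bound $\binom{\lambda(d)+d-1}{k}\le(2M)^{d-1}\binom{\lambda(d)}{k-d+1}$.

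The real difference is how the maximum over $r$ is handled. The paper first reduces to $r=1$ in \Cref{thm:binom-opt}. In the case $\eta>0$ this costs a factor $2^{10\sigma^2\log(1/\gamma)}$, which must then be absorbed using $\sigma^2\le k^{1/2}<\gamma k$. Only after that does the paper compare with $\lambda(d)$. Your uniform bound $N_r\le k+y$ makes that lemma unnecessary and loses nothing, so your route is somewhat shorter and cleaner.
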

Recall
\begin{equation*}
  \lambda(d)=\lambda_{k,m}(d) = \bigg\lfloor\frac{m-(d-1)k + \binom{d+1}{2}}{2}\bigg\rfloor-d+1.
\end{equation*}
This lemma follows from simple calculations involving binomial terms, as we now show.
We separate the prove \Cref{lem:binom-main} in two steps, namely \Cref{thm:binom-opt} and \Cref{thm:binom-simp}.
First, we show that the left-hand side of \eqref{eq:binom-main} with $r=1$ is close to the maximum.

\begin{lem}\label{thm:binom-opt}
  Let $m,k,d \in \NN$ be as in \Cref{thm:main} and let $k^{-1/2}<\gamma<1/5$.
  Then we have
  \begin{equation}\label{eq:binom-opt}
    \max_{r \in [d]}\binom{(1+5 \gamma)\frac{m - (d - r)k}{r + 1}}{k} \leq 2^{10\sigma^2\log(1/\gamma)} \binom{(1+5 \gamma)\frac{m - (d - 1)k}{2}}{k}.
  \end{equation}
\end{lem}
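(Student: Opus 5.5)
Write $N_r = (1+5\gamma)\frac{m-(d-r)k}{r+1}$, so the claim is that $\binom{N_r}{k} \le 2^{10\sigma^2\log(1/\gamma)}\binom{N_1}{k}$ for every $r\in[d]$. The plan is to compare $N_r$ with $N_1$ directly and then turn a small multiplicative gap between them into a bound on the ratio of binomial coefficients.

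\emph{Step 1: compare $N_r$ with $N_1$.} A direct computation gives
\[
\frac{m-(d-r)k}{r+1}-\frac{m-(d-1)k}{2} = \frac{2m-2(d-r)k-(r+1)m+(r+1)(d-1)k}{2(r+1)} = \frac{(r-1)\big((d+1)k-m\big)}{2(r+1)}.
\]
If $m\ge (d+1)k$, this is nonpositive, so $N_r\le N_1$ and the ratio is at most $1$.

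Otherwise the difference is at most $\tfrac12\big((d+1)k-m\big)$. By \eqref{eq:condition-on-d-main} we have $(d+1)k-m \le \binom{d+1}{2}$, and \Cref{prop:dim-doubling} gives $d\le\sigma$. Hence
\[
N_r \le N_1 + (1+5\gamma)\tfrac12\binom{d+1}{2} \le N_1 + \sigma^2 .
\]
So in every case $N_r\le N_1+\sigma^2$.

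\emph{Step 2: bound the binomial ratio.} For an additive shift $h\le\sigma^2$ we use
\[
\frac{\binom{N+h}{k}}{\binom{N}{k}} \le \Big(\frac{N+h}{N-k+1}\Big)^{h}.
\]
It therefore suffices to show $N_1-k \ge c\,\gamma N_1$ for some constant $c>0$. We have
\[
N_1 \ge (1+5\gamma)\,\frac{m-(d-1)k}{2} \ge (1+5\gamma)\,\frac{2k-\binom{d+1}{2}}{2} \ge (1+5\gamma)\big(k-\sigma^2\big),
\]
again using \eqref{eq:condition-on-d-main}. So $N_1-k \ge 5\gamma k - 2\sigma^2$.

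The hypotheses $\gamma>k^{-1/2}$ and $\sigma\le k^{\alpha}$ with $\alpha$ small give $\sigma^2 \le k^{2\alpha} \ll \gamma k$. Thus $N_1-k\ge 4\gamma k$. Since also $N_1\le 2m\le 2k^{1+\alpha}$, we get
\[
\frac{N_1+\sigma^2}{N_1-k} \le \frac{1}{\gamma}\cdot k^{\alpha}.
\]
The stray $k^{\alpha}$ is harmless only if $\log k\lesssim\log(1/\gamma)$. If not, use the sharper estimate $N_1\le (1+5\gamma)(\sigma k)/2$ and bound by $(\sigma/\gamma)^{\sigma^2}$. We then need $\sigma^2\log\sigma\lesssim\sigma^2\log(1/\gamma)$, which holds when $\gamma\le 1/\sigma$.

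\emph{Expected main obstacle.} The hard step is the regime where $N_1$ is much larger than $k$, for instance $\sigma$ large. There the crude factor $1/\gamma$ is not the right bound. Instead I will use
\[
\frac{\binom{N+h}{k}}{\binom{N}{k}} \le \Big(1+\frac{h}{N-k}\Big)^{k} \le \exp\Big(\frac{hk}{N-k}\Big).
\]
When $N_1\ge 2k$ this gives a factor of at most $\exp(\sigma^2)$, which suffices. When $N_1<2k$, we have $N_1-k\ge 4\gamma k$, so
\[
\frac{hk}{N_1-k} \le \frac{\sigma^2}{4\gamma},
\]
which is too big. In that regime we fall back on the factor $(3/\gamma)^{h}$, valid because then $N_1+h\le 3k$ and $N_1-k\ge\gamma k$. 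Combining the two regimes yields $2^{10\sigma^2\log(1/\gamma)}$ with room to spare.
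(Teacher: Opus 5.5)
Your argument is correct and is essentially the paper's: Step~1 is exactly the paper's computation $M(r)-M(1)=(1+5\gamma)\eta\bigl(\tfrac12-\tfrac1{r+1}\bigr)$ with $\eta k=(d+1)k-m\le\binom{d+1}{2}$, and Step~2, like the paper's, rests only on $N_1\ge(1+c\gamma)k$ together with an additive shift of $O(\sigma^2)$. Drop the middle detour: it bounds $N_1\le 2m$, and one of its branches needs $\gamma\le 1/\sigma$, which is not a hypothesis. It is unnecessary, because each factor $\frac{N_1+i}{N_1+i-k}$ of $\binom{N_1+h}{k}/\binom{N_1}{k}$ is at most $\frac{N_1}{N_1-k}\le\frac{1+4\gamma}{4\gamma}$ (as $x/(x-k)$ decreases in $x$), so large $N_1$ never hurts and your final two-regime split, while valid, is not needed.
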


\begin{proof}
  Note that
  \[M(r)=(1+5 \gamma)\frac{\sigma - d +r}{r+1}\]
  does not grow fast with $r \in [d]$.
  Defining $\eta=d+1-\sigma$, so that 
  \[M(r) = (1+5\gamma)\frac{r+1-\eta}{r+1}=(1+5 \gamma)\bigg(1-\frac{\eta}{r+1}\bigg),\] 
  we consider two cases.
  If $\eta \leq 0$, then, taking the ratio of consecutive terms, for $r \geq 2$ we have
  \begin{equation*}
    \frac{M(r)}{M(r-1)} =  \frac{r+1-\eta}{r+1}\cdot \frac{r}{r-\eta}<1.
  \end{equation*}
  Therefore, $M(r)$ is decreasing in $r$ and the maximum is attained at $r=1$.
  
  Now, consider the case $\eta > 0$.
  By \Cref{prop:dim-doubling}, we have $\eta \leq \frac{2\sigma^2}{k}$, and therefore
  \[M(r)-M(1) = (1+5 \gamma)\eta\left( \frac{1}{2}-\frac{1}{r+1} \right) \leq \frac{2\sigma^2}{k}.\]
  Moreover, since $\gamma >k^{-1/2}$ and $\sigma \leq k^{\alpha}$ for $\alpha<1/4$ implies
  \[\frac{\sigma^2}{k} \leq k^{2 \alpha-1} \leq k^{-1/2}< \gamma,\]
  we obtain
  \[M(1)=(1+5 \gamma)(1-\eta/2) \geq (1+5 \gamma)\bigg(1-\frac{\sigma^2}{k}\bigg) \geq (1 +5\gamma)(1- \gamma) \geq 1+3 \gamma.\]
  Thus, combining the above inequalities, we have
  \begin{equation*}
    \max_{r \in [d]} \binom{M(r)k}{k} \leq \binom{M(1)k + 2\sigma^2}{k} \leq 2^{10\sigma^2\log(1/\gamma)}\binom{M(1)k}{k},
  \end{equation*} 
  as we wanted to show.
\end{proof}

Now we proceed to show that the binomial term obtained in the right-hand side of \eqref{eq:binom-opt} is at most $2^{20\gamma \log(1/\gamma) k}$ times $\binom{\lambda(d)}{k - d +1}$.

\begin{lem}\label{thm:binom-simp}
  Let $m,k,d \in \NN$ be as in \Cref{thm:main}, let $r \in [d]$ and let $k^{-1/2}<\gamma<1/5$.
  Then we have
  \begin{equation*}
    \binom{(1 + 5\gamma)\frac{m - (d - 1)k}{2}}{k} \leq 2^{20\gamma \log(1/\gamma) k}\binom{\lambda(d)}{k - d +1}.
  \end{equation*}
\end{lem}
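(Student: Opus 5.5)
Set $N = (1+5\gamma)\frac{m-(d-1)k}{2}$ and $L = \lambda(d)$. Up to the floor and an additive $O(d^2)$ term, $L$ is $\frac{m-(d-1)k}{2}$, so
\[
N = (1+5\gamma)\Big(L + d - 1 - \tfrac12\tbinom{d+1}{2} + O(1)\Big).
\]
Since $d \le \sigma \le k^\alpha$ by \Cref{prop:dim-doubling}, all of $d$, $d^2$ and $\sigma^2$ are $o(\gamma k)$: indeed $\gamma > k^{-1/2}$ and $\alpha$ is small, so $d^2 \le k^{2\alpha} \le \gamma k$. Hence $N \le (1+5\gamma)L + O(d^2) \le (1+6\gamma)L$, provided $L \ge \gamma^{-1}d^2$. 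That proviso holds because \eqref{eq:lb-lambda} gives $L \ge k-d+1 \ge k/2$.

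The plan is to compare $\binom{N}{k}$ with $\binom{L}{k-d+1}$ in three moves.

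\textbf{Step 1.} Handle the difference between the bottom entries $k$ and $k-d+1$. Write
\[
\binom{N}{k} = \binom{N}{k-d+1}\prod_{j=k-d+2}^{k}\frac{N-j+1}{j}.
\]
Each factor is at most $N \le m \le k^{1+\alpha}$, so the product is at most $m^{d} = 2^{O(k^\alpha \log k)} \le 2^{\gamma k}$. The last inequality uses $\gamma > k^{-1/2}$ and $\alpha < 1/4$.

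\textbf{Step 2.} Replace the top entry $N$ by $(1+6\gamma)L$, using monotonicity of the binomial coefficient in its top entry.

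\textbf{Step 3.} Bound $\binom{(1+\eta)L}{j}/\binom{L}{j}$ with $\eta = 6\gamma$ and $j = k-d+1 \le L$. Write the ratio as
\[
\prod_{i<j}\frac{(1+\eta)L-i}{L-i}.
\]
This is increasing in $j$, so it suffices to treat $j = L$, where the ratio is $\binom{(1+\eta)L}{L} = \binom{(1+\eta)L}{\eta L} \le \big(e(1+\eta)/\eta\big)^{\eta L}$, which is $2^{O(\gamma\log(1/\gamma)L)}$. This is the main obstacle, because $L$ may be much larger than $k$: $L$ is about $m/2$, which can be as large as $k^{1+\alpha}$, so the bound must be in terms of $j$ rather than $L$. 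So I would instead bound each factor by
\[
\frac{(1+\eta)L - i}{L-i} = 1 + \frac{\eta L}{L-i}
\]
and split into two cases.

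\emph{Case $j \le L/2$.} Every factor is at most $1+2\eta$, so the product is at most $e^{2\eta j} \le 2^{20\gamma j}$.

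\emph{Case $j > L/2$.} Here $L < 2j \le 2k$. Take the product over $i$ and use $\binom{(1+\eta)L}{j}/\binom{L}{j} \le \binom{(1+\eta)L}{\eta L} \le (e(1+\eta)/\eta)^{\eta L}$. Since $L \le 2k$, this is at most $2^{12\gamma\log(1/\gamma)\cdot 2k\cdot O(1)}$.

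Combining the two cases with Steps 1 and 2 gives a total exponent of order $\gamma\log(1/\gamma)k$. The remaining work is to track constants so that the exponent comes out at most $20\gamma\log(1/\gamma)k$. If necessary I would tighten the constants via $\log(e(1+\eta)/\eta) \le 2\log(1/\gamma)$ for $\gamma < 1/5$, and use the slack that $d^2/L = o(\gamma)$ so that $\eta$ can be taken as $5\gamma + o(\gamma)$.
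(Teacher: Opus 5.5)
Your argument is correct, and it takes a somewhat different route from the paper's.

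\textbf{How the paper organises it.} The paper writes the top entry as $(1+5\gamma)Mk$ with $M=(\sigma-d+1)/2$ and splits into three cases.
For $M<1$ it uses the trivial bound $\binom{(1+5\gamma)k}{5\gamma k}$.
For $1\le M\le 2$ it bounds the ratio $\binom{(1+5\gamma)Mk}{k}/\binom{Mk}{k}\le\binom{2k+5\gamma Mk}{5\gamma Mk}$.
For $M>2$ it raises the top entry one unit at a time from $\lambda(d)+d-1$, each step costing a factor $1+k/((M-1)k+i)$.
It compares with $\binom{\lambda(d)+d-1}{k}$ using the exact relation $\lambda(d)+d-1\ge Mk$. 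Only at the end does it pass to $\binom{\lambda(d)}{k-d+1}$, at cost $(2M)^{d-1}\le\sigma^{\sigma}$.

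\textbf{How yours differs.} You shift the bottom index first, at the crude cost $N^{d-1}\le m^{d}$. You absorb the additive discrepancy between $Mk$ and $\lambda(d)$ into the multiplicative slack $5\gamma\to 6\gamma$. You then bound a single ratio $\binom{(1+\eta)L}{j}/\binom{L}{j}$ as a product of falling-factorial quotients.
Your split $j\le L/2$ versus $j>L/2$ is the same dichotomy as the paper's $M>2$ versus $M\le 2$. The estimates in each half agree in substance: either every factor is $1+O(\gamma)$, or the ratio is bounded by a binomial $\binom{O(k)}{O(\gamma k)}$.
What your route buys is uniformity. There is no separate case $M<1$ and no $(2M)^{d-1}$ comparison, because $d\le k^{\alpha}$ and $\gamma>k^{-1/2}$ make every $d$-dependent loss negligible against $\gamma k$. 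The paper instead tracks the exact top entry throughout.

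\textbf{Closing the constant.} Your written bound $2^{12\gamma\log(1/\gamma)\cdot 2k\cdot O(1)}$ double-counts the factor $2$. Your proposed fallback $\log(e(1+\eta)/\eta)\le 2\log(1/\gamma)$ would only give exponent $24\gamma\log(1/\gamma)k$, which overshoots $20$. The constant closes directly as follows.
\begin{itemize}
\item In the case $j>L/2$ you have $\eta L<2\eta j\le 12\gamma k$.
\item Since $\gamma<1/5$, we have $e(1+6\gamma)<e\cdot 11/5<6$, so $e(1+6\gamma)/(6\gamma)<1/\gamma$. Hence this case costs at most $\gamma^{-12\gamma k}$.
\item The case $j\le L/2$ costs at most $e^{12\gamma k}$.
\item Step~1 contributes a factor $2^{\gamma k}$. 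Combined with either case, the total is at most $2^{20\gamma\log(1/\gamma)k}$, because $1/\gamma>5$. This holds whichever base of logarithm is intended.
\end{itemize}
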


\begin{proof}
  Note first that, defining $M=\frac{\sigma-(d-1)}{2}$, we have
  \[\frac{m-(d-1)k}{2} = \frac{\sigma-(d-1)}{2}k = M k.\]
  In the case $M < 1$, we have
  \[\binom{(1+5 \gamma ) Mk}{k} \leq \binom{(1+5 \gamma ) k}{k} = \binom{(1+5 \gamma)k}{5\gamma k} \leq 2^{10\, \gamma \log (1/\gamma) k},\]
  which, since
  \[\binom{\lambda(d)}{k - d +1} \geq 1,\]
  gives us
  \[\binom{(1 + 5\gamma)\frac{m - (d - 1)k}{2}}{k} \leq 2^{20\, \gamma \log (1/\gamma) k}\binom{\lambda(d)}{k - d +1},\]
  and we are done in this case.

  Consider now the case $1 \leq M \leq 2$.
  In this case, we bound the ratio instead:
  \[ \frac{\binom{(1+5\gamma)Mk}{k}}{\binom{Mk}{k}} = \prod_{i=1}^{5\gamma Mk} \frac{Mk+i}{Mk-k+i} \leq \prod_{i=1}^{5\gamma Mk} \frac{2k+i}{i} = \binom{2k+5\gamma Mk}{5\gamma Mk}. \]
  Since $$\lambda(d)+d-1 = Mk + \frac{1}{2}\binom{d+1}{2} \geq Mk,$$ we know that $$\binom{Mk}{k} \leq \binom{\lambda(d)+d-1}{k},$$
  which is always at least 1.
  It then follows that, for all $1 \leq M \leq 2$,
  \[ \binom{(1+5\gamma)Mk}{k} \leq \binom{2k+5\gamma Mk}{5\gamma Mk} \binom{\lambda(d)+d-1}{k}. \]
  Using the bound $\binom{n}{k} \leq \left(\frac{en}{k}\right)^k$ and the fact that $\gamma < 1/5$, we obtain:
  \[ \binom{2k+10\gamma k}{10\gamma k} \leq \left(\frac{e(2+10\gamma)}{10\gamma}\right)^{10\gamma k} \leq \left(\frac{2}{\gamma}\right)^{10\gamma k} \leq 2^{15\gamma \log(1/\gamma) k}.\]
  This gives us our intermediate bound for $1\leq M \leq 2$:
  \begin{equation}\label{eq:intermediate-bound}
    \binom{(1+5\gamma)Mk}{k} \leq 2^{15\gamma \log(1/\gamma) k}\binom{\lambda(d)+d-1}{k}.
  \end{equation}

  Finally, we bound $\binom{\lambda(d)+d-1}{k}$ from above in terms of $\binom{\lambda(d)}{k-d+1}$ as follows:
  \begin{equation}\label{eq:removing-dminusone}
    \binom{\lambda(d)+d-1}{k} \leq (2M)^{d-1} \binom{\lambda(d)}{k - d + 1}  \leq \sigma^{\sigma} \binom{\lambda(d)}{k - d + 1} \leq 2^{5\gamma \log(1/\gamma) k} \binom{\lambda(d)}{k - d + 1},
  \end{equation}
  where in the first inequality we use
  \[\frac{\lambda(d)}{k-d+1} \leq 2M,\]
  and in the last inequality we use $\sigma \leq k^{\alpha}$ and $\gamma \geq k^{-1/2}$.
  Substituting \eqref{eq:removing-dminusone} into \eqref{eq:intermediate-bound} yields
  \[\binom{(1 + 5\gamma)\frac{m - (d - 1)k}{2}}{k} \leq 2^{20\gamma \log(1/\gamma) k}\binom{\lambda(d)}{k - d +1},\]
  establishing the desired bound for $1 \leq M \leq 2$. 

  Therefore, it is enough to consider the case $M > 2$, where we need a bit more careful calculation.
  Note first that for any $a,b \in \N$ with $0 \leq b \leq a$ we have
  \[ \binom{a+1}{b} \leq \bigg(1+\frac{b}{a-b+1}\bigg) \binom{a}{b}.\]
  Applying this recursively for
  \[t=5\gamma M k -\frac{1}{2}\binom{d+1}{2} \leq 5 \gamma M k\]
  steps and recalling that
  \[\lambda(d)+d-1=Mk+\frac{1}{2}\binom{d+1}{2}\]
  we get
  \begin{align*}
    \binom{(1+5\gamma)Mk}{k} &\leq  \prod_{i=1}^{5\gamma Mk} \bigg( 1+ \frac{k}{(M-1)k+i}\bigg)\binom{\lambda(d)+d-1}{k}.
  \end{align*}
  Substituting \eqref{eq:removing-dminusone} into the above inequality, it is enough to show that
  \begin{equation*}
    \prod_{i=1}^{5\gamma Mk} \bigg( 1+ \frac{k}{(M-1)k+i}\bigg) \leq 2^{15\gamma k} \leq 2^{15\gamma \log(1/\gamma) k}.
  \end{equation*}
  For that, we use $1+x \leq e^x$ for $x \geq 0$ in the above product to obtain
  \begin{equation*}
    \exp\bigg(k \sum_{i=1}^{5 M \gamma k }\frac{1}{(M-1)k+i}\bigg)  \leq \exp\Bigg(k \log\bigg(1+ 5\gamma + \frac{5\gamma}{M-1}\bigg)\Bigg) 
  \end{equation*}
  Finally, using $M>2$ in the above right-hand side, we obtain
  \[\prod_{i=1}^{5\gamma Mk} \bigg( 1+ \frac{k}{(M-1)k+i}\bigg) \leq \exp\Bigg(k \log\bigg(1+ 10\gamma\bigg)\Bigg)\leq 2^{15\gamma k},\]
  as needed to complete the proof.
\end{proof}

Now, we combine \Cref{thm:binom-opt} and \Cref{thm:binom-simp} to prove \Cref{lem:binom-main}.
\begin{proof}[Proof of \Cref{lem:binom-main}]
  Applying \Cref{thm:binom-opt} and then \Cref{thm:binom-simp}, we have
  \begin{equation*}
    \max_{r \in [d]} \binom{(1 + 5\gamma)\frac{m - (d - r)k}{r + 1}}{k} \leq 2^{10\sigma^2\log(1/\gamma)} \binom{(1+5 \gamma)\frac{m - (d - 1)k}{2}}{k} \leq 2^{30\gamma \log (1/\gamma) k}\binom{\lambda(d)}{k - d +1},
  \end{equation*}
  where in the last inequality we also used $\sigma^2 \leq k^{2\alpha} \leq k^{1/2}$ (for $\alpha < 1/4$) and $\gamma \geq k^{-1/2}$.
\end{proof}

\section{A geometric observation in the proof of \Freiman's lemma}\label{sec:freiman-trick}
The goal of this appendix is to formalise the following simple geometric observation, first used in the proof of \Freiman's lemma.
\begin{prop}\label{prop:freiman-trick}
  Let \(d,r \in \N\) and let \(Z \subset \R^{d}\) be a finite set with $\rank(Z)=r$.
  Let $v \in Z$ be a vertex in the convex hull of $Z$ and assume $\rank(Z')=r$, where $Z'=Z \setminus\{v\}$.
  Then there exists a set $U = \{u_1, \ldots, u_{r+1}\}\subset Z$ such that $v+u_i \neq v+u_j$ for each $i \neq j$ and $(v+U) \cap (Z'+Z') = \emptyset$.
\end{prop}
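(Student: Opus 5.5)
Since $v$ is a vertex of $\conv(Z)$, I would first fix a linear functional $f:\RR^d\to\RR$ with $f(v) > f(z)$ for every $z \in Z'$. The key step is to choose the points $u_i$ so that each sum $v+u_i$ provably avoids $Z'+Z'$; the distinctness condition $v+u_i\neq v+u_j$ is automatic as soon as the $u_i$ are distinct. I would take $u_1 = v$. Then $f(2v) > f(z+z')$ for all $z,z'\in Z'$, so $2v\notin Z'+Z'$.

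For the remaining $r$ points I would follow the classical proof of \Freiman's lemma. Since $\rank(Z')=r$, the convex hull of $Z'$ is an $r$-dimensional polytope inside the $r$-dimensional affine hull of $Z$, and $v$ lies outside it. Consider the cone from $v$ over $\conv(Z')$, that is, the union of segments from $v$ to points of $\conv(Z')$. Its facets through $v$ are spanned by $v$ and faces of $\conv(Z')$ that are \enquote{visible} from $v$. Pick one such facet $F$ containing $v$. It is an $(r-1)$-dimensional face of $\conv(Z)$, and it contains at least $r$ affinely independent points of $Z$, one of which is $v$. So it contains at least $r$ points $u_2,\dots,u_{r+1}\in Z'\cap F$ which, together with $v$, affinely span $F$.

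It remains to show that $v+u_i\notin Z'+Z'$ for each such $u_i$. Let $g$ be the linear functional supporting $F$, so that $g$ is maximised on $Z$ exactly on $F$. Suppose $v+u_i=z+z'$ with $z,z'\in Z'$. Then $g(z)+g(z') = g(v)+g(u_i)$, which equals twice the maximum, so both $z$ and $z'$ lie in $F$. Now restrict attention to $F$. Within $F$ the point $v$ is still a vertex, so the same argument with a functional $f'$ strictly maximised at $v$ on $Z\cap F$ gives $f'(z)+f'(z') < f'(v)+f'(u_i)$. For this I need $f'(u_i)\ge \max\big(f'(z),f'(z')\big)$. That is not automatic, so the plan is instead to choose, inside $F$, the $u_i$ to be the points maximising a suitably generic functional. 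Concretely, I would use induction on dimension: apply the proposition to $Z\cap F$, of rank $r-1$ with vertex $v$. If $\rank\big((Z\cap F)\setminus\{v\}\big)=r-1$, this gives $r$ points; otherwise $v$ is a cone point and the rank-drop case gives points directly.

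This is the step I expect to be the main obstacle, so I would fall back on the textbook version. Choose a triangulation of $\conv(Z)$ that refines the cone structure over $v$. The simplices containing $v$ that are not in $\conv(Z')$ have other vertices $u$ lying on visible faces. Then show, via the separating hyperplane between $v$ and $\conv(Z')$ combined with the visibility property, that $\tfrac12(v+u)$ lies outside $\conv(Z')$, and hence $v+u\notin Z'+Z'$. Note that $\tfrac12(v+u)\in\conv(Z')$ would follow from $v+u=z+z'$. Visible faces contain at least $r$ affinely independent points, so together with $u_1=v$ this gives the required $r+1$ points.
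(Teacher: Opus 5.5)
Your last paragraph is the right proof, and it is essentially the paper's. The paper takes the supporting hyperplane $\cH$ of a facet of $\conv(Z')$ with $v$ strictly on the other side. It notes that $|\cH\cap Z'|\ge\rank(Z')=r$, because a facet of the $r$-dimensional polytope $\conv(Z')$ is $(r-1)$-dimensional. It then takes $U\subset(\cH\cap Z')\cup\{v\}$.

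Three things to tighten in that fallback:
\begin{enumerate}[(a)]
  \item The triangulation is superfluous.
  \item The midpoint step must use the facet hyperplane $\cH$ through $u$, not a generic hyperplane separating $v$ from $\conv(Z')$, since such a hyperplane does not pass through $u$ and says nothing about $\tfrac12(v+u)$. With the facet hyperplane the argument is immediate: $\conv(Z')\subset\cH^-$, $u\in\cH$ and $v\notin\cH^-$, so $\tfrac12(v+u)$ lies strictly outside $\cH^-$. By contrast, $v+u=z+z'$ with $z,z'\in Z'$ would put $\tfrac12(v+u)$ in $\conv(Z')$.
  \item ``Visible faces'' must be visible \emph{facets}, taken inside $\operatorname{aff}(Z')=\operatorname{aff}(Z)$, for the count of $r$ affinely independent points of $Z'$ to hold.
\end{enumerate}

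The main line of your proposal, which works inside a facet $F$ of $\conv(Z)$ through $v$, should be discarded rather than patched. It is one point short no matter how the avoidance issue is handled. Such an $F$ is $(r-1)$-dimensional and may contain only $r$ points of $Z$ in total, one of them $v$. So in general only $|Z'\cap F|\ge r-1$ holds, and your claimed points $u_2,\dots,u_{r+1}\in Z'\cap F$ need not exist. The inductive variant, applying the proposition to $Z\cap F$, likewise yields only $r$ points.

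For instance, take $Z=\{0,1\}^2$ and $v=(1,1)$. Each edge of the square through $v$ contains a single point of $Z'$. The correct choice is $U=\{(1,1),(1,0),(0,1)\}$, which comes from the facet $x+y=1$ of $\conv(Z')$; that segment is not a face of $\conv(Z)$ at all. The extra point comes precisely from using a facet of $\conv(Z')$: it already carries $r$ points of $Z'$, and $v$ lies strictly off it.
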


This is a simple consequence of Observation 6.2 in \citep{campos2024independence}, which we state and prove below for convenience.

\begin{obs}\label{obs:facet-intersection}
  Let \(d,r \in \N\) and let \(Z \subset \R^{d}\) be a finite set.
  Let $v \in Z$ be a vertex in the convex hull of $Z$ and define $Z'=Z \setminus\{v\}$.
  Then there exists a hyperplane $\cH$ such that $v$ and $Z' \setminus \cH$ are on different sides of $\cH$.
  Moreover, $|\cH\cap Z'|\geq \rank(Z')$.
\end{obs}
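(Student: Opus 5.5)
The plan is to separate $v$ from $\conv(Z')$ by a hyperplane that supports $\conv(Z')$ along a facet. The facet then carries many points of $Z'$. The only input needed from the hypothesis is that $v$ is a vertex of $\conv(Z)$, which gives $v \notin \conv(Z')$. I will split into two cases according to whether $v$ lies in the affine hull $L := \operatorname{aff}(Z')$, whose dimension is $r' := \rank(Z')$.

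\emph{Case 1: $v \notin L$.} Since $L$ is an affine subspace not containing $v$, we have $L \neq \R^d$. Hence $L$ is contained in some affine hyperplane $\cH$ with $v \notin \cH$; for instance, extend $L$ by directions transverse to $v - L$. Then $Z' \setminus \cH = \emptyset$, so the separation requirement holds vacuously. Moreover $|\cH \cap Z'| = |Z'| \geq r' + 1 \geq \rank(Z')$, since a set of rank $r'$ has at least $r'+1$ points. This also covers the degenerate case $|Z'| = 1$.

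\emph{Case 2: $v \in L$.} Work inside $L \cong \R^{r'}$, where $Q := \conv(Z')$ is a full-dimensional polytope. Since $v \in L \setminus Q$, the facet (H-)description of $Q$ gives a facet $F$ of $Q$ whose defining closed half-space contains $Q$ but not $v$. Let $F_L$ be the affine hyperplane of $L$ spanned by $F$. Then $v$ lies strictly on one side of $F_L$ within $L$, and every point of $Z' \setminus F_L$ lies strictly on the other side. The facet $F$ has dimension $r'-1$ and is the convex hull of the vertices of $Q$ it contains, all of which belong to $Z'$. So $F_L$ contains at least $r'$ affinely independent points of $Z'$, giving $|F_L \cap Z'| \geq r'$. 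Finally, extend $F_L$ to a hyperplane of $\R^d$ by setting $\cH := F_L + L_0^{\perp}$, where $L_0$ is the direction space of $L$. Then $\cH \cap L = F_L$, and the side of a point of $L$ with respect to $\cH$ agrees with its side with respect to $F_L$. This gives both required properties.

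The main obstacle is only the bookkeeping in Case 2: making sure the facet hyperplane lifts to a genuine hyperplane of $\R^d$ without acquiring extra points or changing sides. Intersecting with $L$ handles this, since all points of $Z$ lie in $L$ in that case. The counting claim $|\cH \cap Z'| \geq r'$ rests on the standard fact that a facet of an $r'$-dimensional polytope spans an $(r'-1)$-dimensional affine space using vertices of the polytope.
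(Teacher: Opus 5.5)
Your proof is correct and follows the same approach as the paper: take a facet-defining hyperplane of $\conv(Z')$ whose closed half-space excludes $v$ (which exists since $v \notin \conv(Z')$), and count the at least $\rank(Z')$ vertices of $Z'$ lying on that facet. Your case split on whether $v \in \operatorname{aff}(Z')$, together with the lift $\cH = F_L + L_0^{\perp}$, makes rigorous the case where $\conv(Z')$ is not full-dimensional in $\R^d$, which the paper's proof leaves implicit.
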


\begin{proof}
  Write $\conv(Z') = \bigcap_{\cH \in \mathbf{H}} \cH^-$, where $\mathbf{H}$ is the collection of hyperplanes supporting the facets of $\conv(Z')$ and $\cH^-$ denotes a closed half-space defined by $\cH$.
  Since $v \not \in \conv(Z')$, there exists $\cH \in \mathbf{H}$ such that $v \not \in \cH^-$, so $v$ and $Z' \setminus \cH$ are on different sides of $\cH$.
  The second part of the statement follows from $\cH$ intersecting a facet of $\conv(Z')$ and the fact that every facet contains at least $\rank(Z')$ vertices, since it is a set with rank equal to $\rank(Z')-1$.
\end{proof}

Now we can prove \Cref{prop:freiman-trick}.
\begin{proof}[Proof of \Cref{prop:freiman-trick}]
  Let $\cH$ be the hyperplane given by \Cref{obs:facet-intersection} for $Z$ and $v$, and take $\oN{v} = \big(\cH \cap Z'\big) \cup \{v\}$.
  We claim any subset of size $r+1$ of $\oN{v}$ is a suitable choice for $U$.

  Observe first that, in fact, we can make such choice, since
  \[|\oN{v}| = |\cH \cap Z'| + 1 \ge \rank(Z') + 1 = r+1.\]
  To see that $\oN{v} + v$ and $Z' + Z'$ are disjoint, first notice that $2v \not \in Z' + Z'$ follows from $v$ being a vertex of $\conv(Z)$.
  For the remaining elements of $\oN{v}$, \ie $v' \in \cH \cap Z'$, $v' + v$ is not in $Z' + Z'$ because $(v' + v)/2$ is a midpoint of the segment connecting $v$ and $v'$, and this midpoint clearly lies outside $\conv(Z')$ by our choice of $\cH$.
\end{proof}
\end{document}